\documentclass[]{amsart}

\usepackage{amssymb}
\usepackage{amsmath}
\usepackage{amstext}
\usepackage{amsthm}
\usepackage{enumitem}

\usepackage{graphicx}
\usepackage{tabularx}

\usepackage[colorlinks=true]{hyperref} %

\usepackage{subcaption}

\usepackage[citestyle=alphabetic,bibstyle=alphabetic,backend=bibtex,maxbibnames=99]{biblatex}
\usepackage{tikz}
\usepackage{tikz-cd}
\usetikzlibrary{decorations.pathreplacing}
\usetikzlibrary{knots}

\usepackage[linesnumbered,ruled,vlined]{algorithm2e}

\SetCommentSty{mycommfont}

\newcommand{\tr}{\mathop{\mathrm{tr}}}
\newcommand{\rank}{\mathop{\mathrm{rank}}}
\newcommand{\spec}{\mathop{\mathrm{Spec}}}

\newcommand{\nil}{\mathop{\mathrm{nil}}}
\newcommand{\ideal}[1]{\langle#1\rangle}
\newcommand{\bs}[1]{\boldsymbol{#1}}

\newcommand{\etale}{{\'{e}tale }}

\newtheorem{theorem}{Theorem}[section]

\newtheorem{lemma}[theorem]{Lemma}
\newtheorem{proposition}[theorem]{Proposition}
\newtheorem{corollary}[theorem]{Corollary}
\theoremstyle{definition}
\newtheorem{definition}[theorem]{Definition}
\newtheorem{example}{Example}

\newtheorem{problem}{Problem}
\newtheorem*{problem*}{Problem}
\theoremstyle{remark}
\newtheorem{remark}{\indent Remark}
\newtheorem*{remark*}{Remark}

\title{An Effective Criterion for Covering Maps Between Real Varieties}

\author{Rizeng Chen}
\address{School of Mathematical Sciences, Peking University, 100871, Beijing, China}
\email{xiaxueqaq@stu.pku.edu.cn}

\keywords{Covering Map, Locally Constant Geometric Fiber, Finite \'Etale Morphism, Real Algebraic Geometry}

\subjclass[2020]{Primary 14Q30; Secondary 14P10, 14Q20, 68W30}
\thanks{This work is supported by National Key R\&D Program of China (No. 2022YFA1005102, No. 2024YFA1014003).}

\begin{document}
	
	\begin{abstract}

		We prove that a quasi-finite flat morphism with locally constant geometric-fiber cardinality between varieties over a real closed field induces a covering map on the rational points. The algebraic core of the criterion is a characteristic-zero result showing that every such morphism becomes finite étale after reduction. Since reduction does not change rational points, the induced map is a covering in the Euclidean topology. We extend the covering conclusion to Boolean combinations of closed subschemes and their complements, and construct a finite stratification separating finite covering families from families with positive-dimensional fibers. Finally, we compute the canonical non-finite, non-finite-flat, and non-finite-étale loci, using Gr\"{o}bner bases. This yields effective tests for the hypotheses of the covering criterion. We conclude with applications illustrating these constructions.
	\end{abstract}
	
	\maketitle

	\section{Introduction}
	
	\subsection{Background} Studying a family of polynomial systems is a central topic in applied algebraic geometry. For example, algebraic statistics studies the moduli space of all possible parameters of statistic models \cite{drton2009lectures,huh2014likelihood,rodriguez2015data}; in the theory of chemical reaction network, the steady states form a family parameterized by reaction rates \cite{craciun2009toric,muller2016sign,gross2021steady}; the dynamic behaviors of biological systems can be encoded as parametric polynomial systems \cite{wang2005stability}; and even the motion planning of robotics falls in this theme \cite{schwartz1983piano,canny1987complexity}. %

	Among all families, those whose projection to the parameter space is a covering map are perhaps the easiest to understand. In such a family, the cardinality of the fiber is locally constant, while the path-lifting property allows solutions to be transported continuously along paths in the parameter space. It is well-known that, under mild hypotheses, a family becomes a covering after restricting to a dense open subset of the parameter space. The exceptional systems are therefore parametrized by the complementary proper closed subset.
	
	This generic result, however, says little about the exceptional systems themselves. An illustrative example comes from algebraic computer vision: Sun, Li and Li showed that singular solutions of the Perspective-Three-Point (P3P) problem are associated with special configurations determined by the control-point triangle \cite{sun2026geometric}. Such a geometric description does not follow directly from the generic covering result. Moreover, the exceptional locus may itself fail to satisfy the hypotheses used to obtain that result, so the same argument cannot simply be applied again. It is therefore equally important to identify and understand the obstructions occurring in these exceptional systems.
	
	Hence, our goals in this paper are to give an explicit criterion for when a family induces a topological covering and to identify the obstructions that prevent an arbitrary family from satisfying this criterion. By ``explicit'', we also mean effectively computable, so the conditions can be checked by a computer.

	Now we can formally state our problem. In algebraic geometry, a family of polynomial systems is given by a morphism whose fibers correspond to each member in the family. %
	\begin{problem}\label{prob-main}
		Let $R$ be a real closed field. The map $\pi :X\to Y$ is an $R$-morphism of $R$-varieties. Recall that $X(R)$ and $Y(R)$ can be endowed with the Euclidean topology from the order structure on $R$. Find explicit effective conditions on $\pi$ so that the induced map on $R$-rational points $\pi_R: X(R)\to Y(R)$ is a covering map in the Euclidean topology.
	\end{problem}
	
	\subsection{Previous Results}
	
	There have been studies about covering maps between real varieties from the theoretical side. Delfs and Knebusch discussed covering maps in the category of locally semi-algebraic spaces \cite[Section 5]{delfs1984introduction}. Schwartz characterized a covering map $f$ of locally semi-algebraic spaces by the associated morphism of the corresponding real closed spaces being finite and flat \cite{schwartz1988open}. The theory of real closed spaces is introduced by Schwartz as a generalization of locally semi-algebraic spaces \cite{schwartz1989basic}. Baro, Fernando and Gamboa studied the relationship between branched coverings of semi-algebraic sets and its induced map on the spectrum of the ring of continuous semi-algebraic functions in \cite{baro2022spectral}. This line of research does not really give effective criteria because it relies on (various generalizations of) semi-algebraic spaces. The structural sheaf on a semi-algebraic space consists of continuous semi-algebraic functions and it is far from being computable (it is not even noetherian).
	
	There are also some effective criteria in the literature. Cylindrical algebraic decomposition (CAD) can be thought as coverings \cite{collins1975quantifier,hong1990improvement,mccallum1998improved,brown2001improved}. Indeed, the sections of a c.a.d.\ form a covering for the corank 1 projection. Using the implicit function theorem, Lazard and Rouillier showed that $f:X\to Y$ is a covering of $\mathbb{R}$-varieties in \cite{lazard2007solving} under the assumption that both $X$ and $Y$ are smooth, $X$ is equi-dimensional, $f$ is finite and all fibers are reduced. Recently, Helmer, Leykin and Nanda proposed an algorithm to stratify a quasi-finite morphism $f:X\to Y$ into a union of locally trivial fiber bundle in \cite{helmer2025effective}. Their algorithm computes Whitney stratification of both $X$ and $Y$ and a ``proper'' stratification of the map, so the conclusion follows from Thom's first isotopy lemma \cite[Proposition 11.1]{mather2012notes}.
	
	\subsection{Our Contribution}
	
	In this paper, we propose a new criterion for covering maps between real algebraic varieties. 
	\begingroup
	\def\thetheorem{\ref{thm-covering-map}}
	\begin{theorem}
		Let $\pi : X\to Y$ be a quasi-finite flat morphism with locally constant geometric fibers between $R$-varieties over a real closed field $R$. Then $\pi_R:X(R) \to Y(R)$ is a covering map in the Euclidean topology.
	\end{theorem}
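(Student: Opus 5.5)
The plan follows the outline in the abstract: first an algebraic statement in characteristic zero, then a topological one over $R$. \textbf{Step 1:} show that $\pi_{\mathrm{red}} : X_{\mathrm{red}} \to Y_{\mathrm{red}}$ (or $X_{\mathrm{red}}\to Y$ after suitable base change) is finite étale. Since $X(R)=X_{\mathrm{red}}(R)$ and $Y(R)=Y_{\mathrm{red}}(R)$ with the same Euclidean topology, it then suffices to treat a finite étale morphism. \textbf{Step 2:} show that a finite étale morphism induces a covering on $R$-points.

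\textbf{Step 1, finiteness.} Work locally on $Y$, say $Y=\spec A$ connected with constant geometric fiber cardinality $d$, and set $\bar Y := Y_{\mathrm{red}}$.
\begin{enumerate}[label=(\alph*)]
\item Apply Zariski's main theorem to factor $\pi$ as an open immersion $X\hookrightarrow \bar X$ followed by a finite morphism $\bar X\to Y$.
\item Flatness makes $\pi$ open. Constancy of the geometric fiber cardinality should then force $X$ to be closed in $\bar X$, so that $\pi$ is finite. I would argue by the valuative criterion. If a point of $\bar X\setminus X$ lay in the closure of $X$, take a DVR (or curve) specializing into it. Along the generic point the fiber has $d$ geometric points; over the special point, one branch escapes to the boundary. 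In characteristic zero a flat quasi-finite morphism has generic fiber degree at least the number of special geometric points counted appropriately. Making this count precise is delicate, and I would instead use the following cleaner route.
\end{enumerate}

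\textbf{Step 1, étaleness and finiteness together.} Since the residue fields have characteristic zero, the reduced geometric fibers are just $d$ points. First pass to $X_{\mathrm{red}}\to Y_{\mathrm{red}}$; I must check flatness is retained, or else argue via normalization. Then:
\begin{enumerate}[label=(\alph*)]
\item Use the theorem that a quasi-finite separated morphism whose geometric fiber count is locally constant is finite over a reduced base. For flat $\pi$ the fiber length is lower semicontinuous, and openness controls the rest.
\item Over a reduced base, a finite flat morphism of degree $n$ has fiber length $n$ everywhere. The geometric point count is $d\le n$, with equality exactly where $\pi$ is unramified.
\item The locus where the count equals $n$ is open, and in characteristic zero it is dense, since generic smoothness makes the generic fibers reduced. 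Constancy of the count then gives $d=n$ everywhere, so $\pi$ is unramified, hence finite étale.
\end{enumerate}

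\textbf{Main obstacle.} The hardest part is the non-reduced situation: $X$ may be non-reduced even though $X_{\mathrm{red}}\to Y_{\mathrm{red}}$ is étale. There I would reduce to $Y$ reduced by the base change $Y_{\mathrm{red}}\to Y$, which preserves flatness and fiber counts. I would then show $X\times_Y Y_{\mathrm{red}}$ is reduced, using that it is étale over a reduced scheme once (c) holds. Since rational points are insensitive to nilpotents, this suffices.

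\textbf{Step 2.} Let $f: X\to Y$ be finite étale of constant degree $n$ over a connected piece. For $y\in Y(R)$ the fiber over $y$ has $n$ geometric points, of which some number $k$ are $R$-rational.
\begin{enumerate}[label=(\alph*)]
\item Near each rational fiber point, use the semialgebraic inverse function theorem over $R$, available because étale means the Jacobian is invertible in local coordinates. This gives local semialgebraic sections defined on a neighborhood $U$ of $y$, with disjoint images.
\item Finiteness gives properness of $f_R$ in the Euclidean topology; finite maps are proper over $R$, as in the semialgebraic setting.
\item Properness lets me shrink $U$ so that $f_R^{-1}(U)$ lies in the union of these disjoint neighborhoods. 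The non-real geometric points stay non-real nearby by continuity of roots.
\end{enumerate}
Together these show $f_R^{-1}(U)$ is a disjoint union of $k$ sheets, each mapped homeomorphically onto $U$. Hence $\pi_R$ is a covering map.
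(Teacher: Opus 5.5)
\textbf{There is a genuine gap in Step 1, at the point you call the main obstacle.}

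In (b)--(c) and in the ``Main obstacle'' paragraph you apply the fiber-count argument to $X\times_Y Y_{\mathrm{red}}\to Y_{\mathrm{red}}$. This map is flat but may be non-reduced. The claim that ``generic smoothness makes the generic fibers reduced'' then fails, because generic smoothness needs a reduced source.

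Take $X=\spec k[t,x]/\ideal{x^2}\to Y=\spec k[t]$. It satisfies every hypothesis: the base is reduced, the map is finite flat of degree $n=2$, and the geometric fiber cardinality is constantly $d=1$. Yet $d<n$ everywhere, so the locus $\{d=n\}$ is empty rather than dense. Moreover $X\times_Y Y_{\mathrm{red}}=X$ is neither unramified nor reduced. So what you set out to prove in that paragraph is false. It also contradicts your own correct remark that $X$ may be non-reduced. The paragraph is circular as well: it uses (c) to get the reducedness that (c) presupposes.

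If you instead run (b)--(c) on $X_{\mathrm{red}}\to Y_{\mathrm{red}}$, the generic fibers are reduced and the count argument works. But it works only once you know $X_{\mathrm{red}}\to Y_{\mathrm{red}}$ is flat, which you leave open. This is not automatic: Example~\ref{example:double-cover-of-cusp} shows reduction can destroy flatness. It is the real content of the result. You must show that the nilradical of each fiber $B\otimes_A\kappa_p$ is the image of $\nil B$, i.e.\ that $B_{\mathrm{red}}$ is locally free of rank equal to the geometric fiber cardinality.

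The paper does this with the trace pairing $\Theta:B\to\operatorname{Hom}_A(B,A)$, $b\mapsto(b'\mapsto\tr_{B/A}(bb'))$, with $A$ reduced and $B$ free of rank $r$.
\begin{enumerate}
\item In characteristic $0$, the rank of $\Theta$ at $p$ is the number of geometric points of the fiber, hence a constant $s$.
\item The Fitting ideals of $\operatorname{coker}\Theta$ then force $\operatorname{coker}\Theta$, $\operatorname{im}\Theta$ and $\ker\Theta$ to be locally free, with split sequences.
\item $\ker\Theta=\nil B$. Nilpotents have trace $0$. Conversely, all powers of a kernel element have zero trace on every fiber, so by Newton's identities it is fiberwise nilpotent, hence nilpotent.
\end{enumerate}
Thus $B_{\mathrm{red}}\cong\operatorname{im}\Theta$ is locally free of rank $s$. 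Each fiber has degree $s$ and $s$ geometric points, so it is reduced, and $\pi_{\mathrm{red}}$ is finite \'etale.

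Separately, your finiteness claim in (a) must keep flatness as a hypothesis, as in Proposition~\ref{prop-grothendieck}. The hyperbola-plus-origin of Example~\ref{example-non-covering} is quasi-finite and separated, with constant count over a reduced base, but it is not finite.

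\textbf{Step 2 is a different route from the paper's and can be made to work, with two repairs.}

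First, $Y$ may be singular, so $Y(R)$ has no manifold structure to which an inverse function theorem applies. Use the local structure of \'etale maps instead: $X$ is locally an open piece of $\spec A[\lambda]/\ideal{f}$ with $f$ monic and $f'$ invertible there. Lift $f$ to the ambient $R^m\times R$, apply the semialgebraic implicit function theorem with $y$ as a parameter, and restrict to $Y(R)$.

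Second, over a non-archimedean $R$ ``proper'' cannot mean compactness. Shrinking $U$ must be done semialgebraically, using the bounds from the monic integral equations together with curve selection.

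The paper avoids both issues. It uses EGA IV 18.4.5 to write $\pi_{\mathrm{red}}$ locally as $\spec A[\lambda]/\ideal{u}\to\spec A$ with $u$ monic and $u'$ invertible modulo $u$. The sections are then the ordered real roots of $u$, which are continuous semialgebraic functions near $y$. This treats singular bases and arbitrary $R$ uniformly.
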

	\endgroup
	The geometric picture of the criterion is clear. In algebraic geometry, the flatness serves as a notion of continuity in fibers. The local constancy of geometric fibers can be interpreted as the invariance of the number of distinct complex roots. Therefore these two conditions together shall imply that the fibers (solutions) are locally continuous functions on the base (in the parameters). 
	
	\begin{example} \label{examples-of-covering-maps}
		Here are some examples satisfying the assumption of Theorem \ref{thm-covering-map}. We shall see that they are indeed covering maps.           
		\begin{enumerate}[label=(\alph*)]
			\item %
			Set $Y$ to be the cuspidal curve with singularity removed $V(4 p^3+27 q^2)\backslash V(p,q)$ in the affine plane and set $X$ to be $V(4 p^3+27 q^2, x^3+px+q)\backslash V(p,q,x)$ in the three dimensional space. Then the projection from $X$ to $Y$ is quasi-finite flat with constant geometric fibers. Therefore the projection is a covering on the real points. See Figure \ref{fig:double-cover-of-cusp}. \label{example:double-cover-of-cusp-smooth}
			
			\item Consider $\mathbb{R}$-varieties $X=\spec \mathbb{R}[x,y,z,w]/\ideal{x^2+y^2-1,z^2+w^2-1,x-2z^2+1,x+2 w^2-1,y-2 z w}$ and $Y=\spec \mathbb{R}[x,y]/\ideal{x^2+y^2-1}$. The canonical projection $\pi:X\to Y$ is flat with constant geometric fiber size $2$. The $\mathbb{R}$-rational points of $Y$ can be modeled by the unit circle $S^1$ and $X(\mathbb{R})$ can be identified with a curve parameterized by
			$\varphi\mapsto(\cos 2\varphi,\sin 2\varphi,\cos \varphi,\sin\varphi)$ in the torus $S^1\times S^1=\{(\cos \theta,\sin \theta,\cos \varphi,\sin \varphi)|\theta,\varphi\in\mathbb{R}\}$. So $\pi_{\mathbb{R}}$ is given by           
			$$(\cos 2\varphi,\sin 2\varphi,\cos \varphi,\sin\varphi) \mapsto (\cos 2\varphi,\sin 2\varphi).$$ %
			Therefore $X(\mathbb{R})$ is a double cover of $S^1=Y(\mathbb{R})$. See Figure \ref{fig:no-global-section}.
			
			In this example, one can only define the semi-algebraic sections locally and $X(\mathbb{R})$ is not a disjoint union of copies of $Y(\mathbb{R})$. It is easy to see that the source is connected, but two different global sections will give two disjoint closed subsets. In the corank 1 case, sections can always be defined globally because of the order structure of $R$. In the general case, a covering map is the best we can get.
		\end{enumerate}
	\end{example}
	
	\begin{figure}[htbp]
		\centering              
		\begin{subfigure}[t]{0.30\linewidth}
			\includegraphics[width=\linewidth]{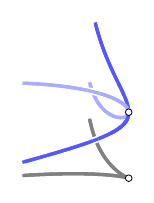}
			\caption{Double cover of cuspidal curve's smooth locus} 
			\label{fig:double-cover-of-cusp}
		\end{subfigure}
		~
		\begin{subfigure}[t]{0.36\linewidth}
			\includegraphics[width=\linewidth]{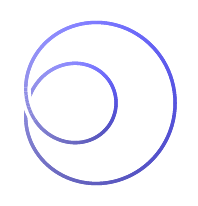}
			\caption{Non-trivial double cover of $S^1$} 
			\label{fig:no-global-section}
		\end{subfigure}
		~			
		\caption{Examples}
	\end{figure}
	
	The condition here is more general than Collins's cylindrical algebraic decomposition, which states for a corank $1$ projection from a hypersurface \cite[Theorem 1]{collins1975quantifier}, and ``discriminant variety'' in the sense of Lazard and Rouillier's that works with equidimensional smooth varieties \cite[Theorem 1]{lazard2007solving}. Therefore, these previous results can be viewed as special cases of our new criterion. Notably, this criterion is even applicable to singular varieties, thereby extending the scope of existing theory. See Example \ref{example:double-cover-of-nodal-curve} for a double cover of a singular curve.
	
	Actually, our new criterion is very sharp. Indeed if either assumption is dropped, the conclusion may fail.
	
	\begin{example}\label{example-non-covering}
		Theorem \ref{thm-covering-map} is quite sharp. We present some non-examples here.
		\begin{enumerate}[label=(\alph*)]
			\item The flatness assumption on the morphism is essential and natural. Let $X$ be the union of hyperbola $V(xy-1)$ and the origin $V(x,y)$ on the plane and consider its projection to the affine line. See Figure \ref{fig:non-flatness}. Obviously the projection is not a covering. Notice that the geometric fiber size is always 1. In this example, the projection is neither finite nor flat. 
			
			For a finite but not flat example, one can modify Example \ref{examples-of-covering-maps}\ref{example:double-cover-of-cusp-smooth} by setting $Y=V(4p^3+27 q^2)$ to be the whole cuspidal curve and adding a point above the singularity, e.g.\ $X=V(4p^3+27 q^2,x^3+px+q)\cup V(p,q,x-1)$. It might be instructive to recall Figure \ref{fig:double-cover-of-cusp}.
			
			\item Also, the constancy assumption on the geometric fiber cardinality is crucial. Let $X=V((y-1)^2-x)\cup V((y+1)^2+x)$ be the union of two parabolas and let $Y=\spec R[x]$ be the affine line, then the canonical projection from $X$ to $Y$ is obviously finite and flat. See Figure \ref{fig:real-fiber-insufficient}. Clearly this is not a covering either. Generically, the geometric fiber consists of four distinct complex roots $y=1\pm\sqrt{x},-1\pm\sqrt{x}$, but over $x=0$ there is only one pair of double roots. Notice that the number of real fibers over $Y(R)$ is always $2$ in this example. So even a constancy condition on the real fiber is not enough.
			
		\end{enumerate}         
	\end{example}
	
	\begin{figure}[htbp]
		\begin{subfigure}[t]{0.36\linewidth}
			\includegraphics[width=\linewidth]{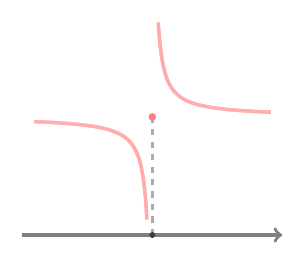}
			\caption{Without flatness} 
			\label{fig:non-flatness}
		\end{subfigure}
		~                       
		\begin{subfigure}[t]{0.36\linewidth}
			\includegraphics[width=\linewidth]{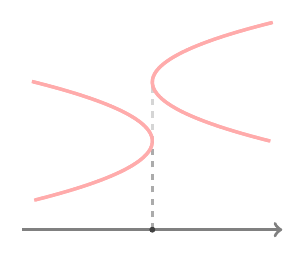}
			\caption{Geometric fiber not locally constant} 
			\label{fig:real-fiber-insufficient}
		\end{subfigure}
		~
		\caption{Non-examples}
	\end{figure}
	
	Our proof strategy consists of two steps. The first step might be of an independent interest: 
	\begingroup
	\def\thetheorem{\ref{thm-reduced-g-etale-is-etale}}
	\begin{theorem}
		Let $k$ be a field of characteristic $0$. If $\pi:X\to Y$ is a quasi-finite flat morphism with locally constant geometric fibers between $k$-varieties, then $\pi_{\mathrm{red}}:X_{\mathrm{red}}\to Y_{\mathrm{red}}$ is finite \'etale.
	\end{theorem}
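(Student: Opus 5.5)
The plan is to prove finiteness first and then étaleness of the reduction. Both parts work étale-locally on $Y$: I pass to strict henselizations and count geometric points.

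\emph{Step 1: finiteness.} By Zariski's Main Theorem, locally on $Y$ I factor $\pi$ as an open immersion $X\hookrightarrow \bar X$ followed by a finite morphism $\bar X\to Y$. I may shrink $\bar X$ to the closure of $X$. To show $X=\bar X$ it suffices, via the valuative criterion, to check that every specialization of a point of $X$ lying over a specialization in $Y$ remains in $X$. So I base change to $\spec A$ with $A$ a strictly henselian DVR mapping to $Y$. Then $\bar X_A$ splits as a disjoint union of local pieces, one for each point of the closed fiber.

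Since $\pi$ is flat, going down holds. Hence every piece whose closed point lies in $X_A$ contains at least one geometric generic point of $X$, and those points all lie in $X$. Write $n$ for the locally constant geometric fiber cardinality. The generic fiber of $X$ has $n$ geometric points, and the closed fiber of $X$ also has $n$ points. Counting therefore forces each such piece to carry exactly one generic geometric point. It also forces the pieces whose closed point is not in $X$ to carry no generic point of $X$. Since $\bar X$ is the closure of $X$, such pieces cannot occur, so $X_A=\bar X_A$ and $\pi$ is proper. Being proper and quasi-finite, $\pi$ is finite. The delicate point in this step is to state the counting carefully when residue fields are not separably closed, which is why I work throughout with strict henselizations.

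\emph{Step 2: reduction to $Y$ reduced.} The base change $X\times_Y Y_{\mathrm{red}}\to Y_{\mathrm{red}}$ is still finite and flat with the same geometric fibers, and its reduction is $X_{\mathrm{red}}$. So I may assume $Y$ is reduced, with $\pi$ finite flat and hence finite locally free.

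Being finite étale can be checked after an étale base change. Moreover, reducedness and the formation of $(-)_{\mathrm{red}}$ commute with étale base change. So I may replace $Y$ by $\spec A$, where $A$ is a strict henselization of a local ring of $Y$; $A$ is reduced. Then $X_A=\spec B$ with $B=\prod_{j=1}^n B_j$, where each $B_j$ is local and free of some rank $m_j$ over $A$. By the constancy of the geometric fiber cardinality, each $B_j$ has exactly one geometric point over every point of $\spec A$.

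\emph{Step 3 (main obstacle): $(B_j)_{\mathrm{red}}=A$.} Here I use characteristic $0$ through the trace. Let $K_i$ be the residue fields at the minimal primes of $A$. Each $B_j\otimes K_i$ has a single geometric point. Its residue extension over $K_i$ is therefore purely inseparable, hence trivial in characteristic $0$. So $B_j\otimes K_i$ is a local Artinian $K_i$-algebra with residue field $K_i$.

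For $b\in B_j$ set $a=\tr(b)/m_j\in A$, which makes sense in characteristic $0$. Over each $K_i$, multiplication by $b-a$ is nilpotent, because its image in the residue field $K_i$ has trace $0$ and so vanishes. Consequently the characteristic polynomial of $b-a$ is $T^{m_j}$ over every $K_i$. Since $A$ is reduced, it embeds in $\prod_i K_i$, so this characteristic polynomial equals $T^{m_j}$ already over $A$. Cayley--Hamilton then gives $(b-a)^{m_j}=0$ in $B_j$. Hence $B_j=A+\nil(B_j)$.

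Because $A$ is reduced and $A\to B_j$ is injective (by freeness), $A\cap\nil(B_j)=0$. Therefore $(B_j)_{\mathrm{red}}=A$, and $X_{\mathrm{red}}\times_Y \spec A$ is a disjoint union of $n$ copies of $\spec A$. This is finite étale over $A$, and by descent $\pi_{\mathrm{red}}$ is finite étale.
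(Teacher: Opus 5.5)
Your proof is correct, but it takes a different route from the paper's.

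\textbf{Finiteness.} The paper gets finiteness by citing Grothendieck's EGA IV 15.5.9 (Propositions \ref{prop-grothendieck} and \ref{prop-qff-implies-ff}). You reprove that result with Zariski's Main Theorem and a count over a strictly henselian DVR. One sentence in that step needs repair. Since $\spec A\to Y$ is not flat, $X_A$ need not be dense in $\bar X_A$, so ``since $\bar X$ is the closure of $X$, such pieces cannot occur'' is not justified. You do not need it. The specialization you are testing comes from a morphism $\spec A\to\bar X$, that is, a section $\spec A\to\bar X_A$ whose generic point lands in $X_A$. Its image is connected, so it lies in a single local piece. That piece contains a generic-fiber point of $X_A$, so by your count its closed point lies in $X_A$. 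This is exactly what the valuative argument requires.

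\textbf{\'Etaleness.} The paper works globally on an affine chart where $B$ is free over a reduced $A$:
\begin{enumerate}
\item it shows the trace form $\Theta\colon B\to B^\vee$ has constant rank $s$, using the classical fact that this rank counts distinct geometric points;
\item it uses Fitting ideals to show that $\ker\Theta$ and $\operatorname{im}\Theta$ are locally free;
\item it identifies $\ker\Theta=\nil B$ by Newton's identities on fibers;
\item it concludes that $B_{\mathrm{red}}\cong\operatorname{im}\Theta$ is locally free of rank $s$ with reduced fibers.
\end{enumerate}
You instead work \'etale-locally. After strict henselization, the constant count splits $B$ into local factors $B_j$, each with exactly one geometric point over every prime. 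The single observation that $b-\tr(b)/m_j$ is nilpotent then gives $(B_j)_{\mathrm{red}}=A$. Both arguments use characteristic $0$ at the same point, namely dividing by integers in a trace identity.

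\textbf{What each route buys.} Yours avoids the trace-form rank theorem and the Fitting-ideal bookkeeping. It also gives the sharper picture that $X_{\mathrm{red}}$ is \'etale-locally a disjoint union of copies of the base. The cost is strict henselizations and descent. Note that strict henselization is ind-\'etale rather than \'etale, so your last step is really faithfully flat descent of flatness plus a fiberwise check of unramifiedness. The paper's route uses more elementary tools, and it exhibits $\nil B$ as the kernel of an explicit $A$-linear map, which suits the paper's effective viewpoint.
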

	\endgroup
	
	If stronger smoothness conditions are imposed, then this result can be established very quickly. For example, suppose further that $Y$ is non-singular and $\pi$ is unramified, then a Shafarevich-style argument shows that locally $X$ is isomorphic to a closed subscheme of $\mathbb{A}_Y^1$ defined by a monic polynomial whose discriminant is a unit \cite[Section 2.6, Theorem 2.30]{shafarevich2013basic}.
	
	However, the lack of smoothness can cause great difficulty, as the normality of $Y$ plays an important role in Shafarevich's argument (essentially the property of minimal polynomial over an integrally closed ring, see \cite[Theorem 9.2]{matsumura1989commutative}). Notice that in general $X_{\mathrm{red}}\to Y_{\mathrm{red}}$ is not flat even though $X\to Y$ is, so Zariski's Main Theorem does not help either. %
	
	Our strategy is to use the trace pairing to separate the reduced structure of $X$ from its nilpotent structure. The local constancy of the geometric fibers forces this pairing to have constant rank, and its kernel turns out to be precisely the nilradical. It follows that the reduced morphism is finite flat with reduced fibers, and hence finite \'{e}tale.

	Therefore for a flat morphism of $k$-varieties ($\mathop{\mathrm{char}}k=0$), if the geometric fiber cardinality is finite and locally constant, then the ramification is purely due to the non-reduced structures of schemes and can be resolved by the reduction. %
	
	The second step finishes the proof by showing that a finite \'etale morphism over a real closed field does induce a covering map on the rational points. This is done by again retreating to the corank $1$ projection case and defining sections locally by solutions to a univariate equation.
	
	Besides the covering criterion itself, we develop two extensions that make it possible to study exceptional systems. First, Theorem \ref{thm-covering-map-of-subschemes} shows that the covering conclusion holds simultaneously for closed subschemes, their complements, and the Boolean combinations obtained from them, provided that the relevant restrictions are flat. We then drop the quasi-finiteness assumption in Proposition \ref{cor-triviality}: an arbitrary morphism admits a finite stratification of the base such that, on each stratum, either the induced maps on all these Boolean pieces are finite covering maps on the rational points, or every fiber of the ambient morphism is positive-dimensional. This separates the part accessible to the covering criterion from the obstruction caused by positive-dimensional fibers.
	
	Furthermore, we make the obstruction loci effectively computable in the affine case. Given a block-order Gr\"obner basis for the graph of a morphism, Theorem \ref{thm:non-finite-locus}, \ref{thm:non-finite-flat-locus}, and \ref{thm:non-finite-etale-locus} compute respectively the non-finite, non-finite-flat, and non-finite-\'{e}tale loci. These are intrinsic closed subsets of the parameter space, since their complements are the largest open subsets over which the morphism is respectively finite, finite flat, and finite \'{e}tale. The corresponding tests for finiteness, finite flatness, and finite \'{e}taleness follow as immediate corollaries. Finally, we illustrate these constructions through a likelihood correspondence from algebraic statistics and a symmetric matrix-completion problem.
	
	\subsection{Structure of the Paper}
	
	Section \ref{sect:preliminary} serves as a quick preparation of the geometric background needed for the paper. Section \ref{sect:reduced-map-finite-etale} is devoted to proving the technical result Theorem \ref{thm-reduced-g-etale-is-etale}. In Section \ref{sect:semi-algebraic-covering}, we prove the main covering criterion and develop the covering stratification. Then we develop algorithms to compute the non-finite, non-finite-flat, and non-finite-\'etale loci in Section \ref{sect:effective}. Some applications are discussed in Section \ref{sect-app}. Finally, we conclude the paper with some directions for future research in Section \ref{sect:future}.
	
	\section{Geometric Preliminaries} \label{sect:preliminary}
	
	In this section we assemble some geometric results that will be useful in this paper.
	
	\subsection{Conventions}~
	
	By a \textbf{$k$-variety}, we mean a separated, of finite type scheme over a field $k$.
	
	By a \textbf{corank 1 projection}, we mean a scheme morphism $X\to Y$ that factors through $\mathbb{A}_Y^1$ the affine line over $Y$ as a composition of closed immersion $X\hookrightarrow \mathbb{A}_Y^1$ and the canonical projection $\mathbb{A}_Y^1\to Y$.
	
	We fix $R$ to be a real closed field and $C=R[\sqrt{-1}]$ is the algebraic closure of $R$. 
	
	By a \textbf{covering map}, we mean a continuous map $p:E\to B$ such that every $b\in B$ has an open neighborhood $U$ whose preimage is a disjoint (possibly empty) union of opens homeomorphic to $U$. We do \emph{not} assume $p$ to be surjective here because $E$ and $B$ are usually taken as the rational points of varieties and they sometimes can be empty.
	
	\subsection{Geometric Points and Geometric Fibers}~
	
	A \textbf{geometric point} $\overline{x}$ of a scheme $X$ is a morphism $\overline{x}: \spec \Omega\to X$, where $\Omega$ is an algebraically closed field. We say $\overline{x}$ lies over $x$ to indicate that $x\in X$ is the image of $\overline{x}$. 
	
	Given a morphism of schemes $f:X\to Y$ and a geometric point $\overline{y}:\spec \Omega\to Y$, the \textbf{geometric fiber} $X_{\overline{y}}$ is the fiber product $X\times_Y \spec \Omega$.
	
	For a quasi-finite scheme morphism $f:X\to Y$, the function $n(y): \begin{array}{rcl} Y& \to &\mathbb{N} \\  y&\mapsto & \# X_{\overline{y}} \end{array}$ counts the cardinality of the geometric fiber. If this function is locally constant on $Y$, then we say $f$ is a morphism with locally constant geometric fibers.
	
	The following result about geometric fiber due to Grothendieck is very useful.
	\begin{proposition}[Grothendieck, {\cite[Proposition 15.5.9]{grothendieck1966elements}}]\label{prop-grothendieck}
		Let $f:X\to Y$ be a flat morphism locally of finite presentation. For each $y\in Y$, let $n(y)$ be the number of geometric connected components of $f^{-1}(y)$.
		\begin{enumerate}[label*=(\roman*)]
			\item If $f$ is quasi-finite and separated, then the function $y\mapsto n(y)$, which is the number of geometric points in $f^{-1}(y)$, is lower semi-continuous on $Y$. If it is a constant in a neighborhood of $y_0$, then $f$ is proper (hence finite) on a neighborhood of $y_0$.
			\item If $f$ is proper, then function $y\mapsto n(y)$ is lower semi-continuous. Suppose moreover $f^{-1}(y_0)$ is geometrically reduced over $\kappa_{y_0}$, then $n(y)$ is a constant on a neighborhood of $y_0$.
		\end{enumerate}
	\end{proposition}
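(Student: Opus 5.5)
The plan is to reduce both parts to a local statement over a strictly henselian base, using étale localization on $Y$. Geometric fiber counts are unchanged by étale base change, and finiteness and properness descend along fpqc (in particular étale surjective) morphisms. Since $f$ is locally of finite presentation, the standard limit arguments reduce us to $Y$ noetherian. We then replace $Y$ by $\spec \mathcal{O}^{sh}_{Y,y_0}$, or more precisely by a sufficiently small étale neighborhood of $y_0$. This suffices because we only need the inequality $n(y)\ge n(y_0)$, or equality, for all $y$ in some neighborhood of $y_0$.

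For (i), the first step is to invoke Zariski's Main Theorem in its henselian form for a quasi-finite separated morphism. After an étale neighborhood $(V,v_0)\to (Y,y_0)$ with trivial residue extension and separably closed residue field at $v_0$, this gives a decomposition
\[
X_V = X_1\sqcup\cdots\sqcup X_r\sqcup X'' .
\]
Here each $X_i\to V$ is finite and has exactly one point over $v_0$, and hence exactly one geometric point, since the residue field is separably closed. The piece $X''$ does not meet the fiber over $v_0$. Thus $r=n(y_0)$. Each $X_i$ is finite and flat of finite presentation over $V$, hence finite locally free. After shrinking $V$, each $X_i$ has constant positive rank, so it is surjective onto $V$. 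Therefore every fiber of $X_i$ contributes at least one geometric point, which gives $n(v)\ge r$ near $v_0$, i.e.\ lower semicontinuity.

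Now suppose $n$ is constant near $y_0$. Then each $X_i$ contributes exactly one geometric point over each nearby $v$, and $X''$ contributes none. Hence $X''_v=\emptyset$ for all $v$ near $v_0$. So after shrinking, $X_V=\bigsqcup X_i$ is finite over $V$. I expect care is needed here: the image of $X''$ is open, since a flat morphism locally of finite presentation is open, and it avoids $v_0$. However, it could a priori accumulate at $v_0$, and it is precisely the constancy of $n$ on a whole neighborhood that rules this out. Finally, finiteness descends from the étale neighborhood to an open neighborhood of $y_0$ in $Y$, because the image of an étale morphism is open and finiteness is fpqc-local on the base.

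For (ii), I would again pass to the henselization. Over a henselian local base, the proper morphism $f$ has the property that clopen subsets of the closed fiber lift uniquely to clopen subsets of $X$. This is the henselian form of Stein factorization, or equivalently the theorem on formal functions combined with Hensel's lemma. After a further étale extension, the $n(y_0)$ geometric connected components of $X_{y_0}$ are defined over the residue field. They then lift to a decomposition $X=\bigsqcup_{i=1}^{n(y_0)} Z_i$ into clopen pieces. Each $Z_i$ is proper, flat and of finite presentation, hence open and closed in image. Its image is therefore the whole connected local base. Hence each $Z_i$ has nonempty fibers, and $n(y)\ge n(y_0)$ nearby.

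For the final claim, suppose $X_{y_0}$ is geometrically reduced. Then $H^0(X_{\bar y_0},\mathcal{O})$ is a product of $n(y_0)$ copies of the field. In this situation, cohomology and base change for $f_*\mathcal{O}_X$ near $y_0$ shows that $f_*\mathcal{O}_X$ is locally free and commutes with base change in degree $0$, and that it is finite étale of rank $n(y_0)$. Geometric reducedness is an open condition on fibers in the flat proper case, so the fibers nearby are also geometrically reduced. On such fibers the number of geometric connected components equals the rank of $H^0(X_{\bar y},\mathcal{O})$, which is therefore the constant $n(y_0)$.

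I expect the main obstacle to be this base-change step: showing that $H^0$ commutes with base change near $y_0$. I would first try the argument of EGA III 7.8: geometric reducedness of the closed fiber makes $H^0(X_{y_0},\mathcal{O})$ separable over $\kappa(y_0)$, and one uses this to lift idempotents and obtain surjectivity of the base-change map.
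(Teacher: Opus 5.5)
The paper gives no proof of this proposition: it is quoted from EGA IV 15.5.9 and used as a black box in Section~\ref{sect:reduced-map-finite-etale}. So your proposal has to stand on its own, and in outline it does.

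For (i), you decompose $X_V$ over an \'etale neighbourhood via Zariski's Main Theorem into finite pieces $X_i$ plus a piece $X''$ avoiding the fibre over $v_0$. Each $X_i$ is finite locally free of positive rank, hence surjective, which gives the lower bound $n\ge n(y_0)$. Under constancy of $n$, $X''$ vanishes near $v_0$, and finiteness descends along the open image of the \'etale map. For (ii), lifting clopen subsets of the closed fibre over a henselian base, followed by cohomology and base change for $f_*\mathcal{O}_X$ with idempotent lifting, is the standard route. It is correct as you sketch it.

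One step in (i) is misstated, though the fix is routine. You ask for an \'etale neighbourhood with $\kappa(v_0)=\kappa(y_0)$ and separably closed residue field at $v_0$. That is impossible unless $\kappa(y_0)$ is already separably closed. More generally, an \'etale neighbourhood only has residue field finite separable over $\kappa(y_0)$.

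The point matters. The Zariski Main Theorem decomposition only guarantees that the unique point of $X_i$ over $v_0$ has residue field purely inseparable over some $\kappa(x)$ with $x\in X_{y_0}$. So $X_i$ may carry $[\kappa(x):\kappa(y_0)]_s>1$ geometric points over $v_0$. Surjectivity of $X_i$ then yields only one geometric point over nearby $v$, so in general $r\neq n(y_0)$, and both the lower bound and the constancy argument fail.

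Two repairs work:
\begin{enumerate}
\item First pass to an \'etale neighbourhood whose residue field is a finite Galois extension of $\kappa(y_0)$ splitting the separable parts of the finitely many $\kappa(x)$, $x\in X_{y_0}$. Then every point over $v_0$ has purely inseparable residue field, and you apply the decomposition with trivial residue extension.
\item Alternatively, work over $\mathcal{O}^{sh}_{Y,y_0}$ and descend the decomposition to an \'etale neighbourhood by a limit argument. This is legitimate because a quasi-finite separated $f$ is quasi-compact and quasi-separated.
\end{enumerate}

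The same kind of descent is what turns your henselian argument in (ii) into a statement on a neighbourhood of $y_0$, rather than only on generizations of $y_0$. You can get it either by descending the clopen decomposition to an \'etale neighbourhood, or from constructibility of $n$ (EGA IV 9.7.9) together with stability of $\{n\ge m\}$ under generization.

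Finally, in the geometrically reduced case you do not need openness of geometric reducedness. The $n(y)$ connected components of $X_{\bar y}$ give $n(y)$ orthogonal idempotents, so $n(y)\le \dim_{\kappa(y)}H^0(X_y,\mathcal{O}_{X_y})=n(y_0)$ near $y_0$. Lower semicontinuity gives the reverse inequality.
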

	
	\subsection{Flat and \'Etale Morphisms}
	
	In algebraic geometry, a morphism of schemes parameterizes a family of schemes and the fibers are family members. The challenge is that those fibers may ``jump'' at some point in a weird way. The \textbf{flatness} prevents this so a flat family is continuously varying.
	
	\begin{definition}
		Let $\pi :X\to Y$ be a morphism of schemes. We say $\pi$ is flat over $Y$ at a point $x\in X$, if the stalk $\mathcal{O}_{X,x}$ is a flat $\mathcal{O}_{Y,y}$-module, where $y=\pi(x)$ is the image of $x$. That is, for any short exact sequence of $\mathcal{O}_{Y,y}$-modules
		$$0\to M\to N\to P \to 0,$$
		the tensor product preserves exactness:
		$$0\to M \otimes_{\mathcal{O}_{Y,y}} \mathcal{O}_{X,x}\to N \otimes_{\mathcal{O}_{Y,y}} \mathcal{O}_{X,x}\to P \otimes_{\mathcal{O}_{Y,y}} \mathcal{O}_{X,x} \to 0.$$
		
		We say $\pi$ is flat, if it is flat at every point of $X$.
	\end{definition}
	
	The notion of ``\textbf{\'etaleness}'' is an algebraic analogue of covering spaces. The French word ``\'etale'' refers to the appearance of the calm sea surface at high tide under a full moon \cite[p.\ 175]{mumford1999red}, where the moonlight reflects off the locally parallel waves. 
	
	\begin{definition}
		Let $\pi:X\to Y$ be a morphism of $k$-varieties over a field $k$. The morphism $\pi$ is said to be \'etale if one of the following equivalent conditions holds:
		\begin{enumerate}
			\item $\pi$ is flat, and all geometric fibers of $\pi$ are regular of dimension $0$.
			\item $\pi$ is flat and unramified (for all $x\in X$ and $y=\pi(x)$: $\mathfrak{m}_y\mathcal{O}_{X,x}=\mathfrak{m}_x$, and $\kappa_x$ is a separable algebraic extension of $\kappa_y$).
			\item $\pi$ is flat, and the relative differential $\Omega_{X/Y}=0$.
			\item $\pi$ is smooth of relative dimension $0$.
			\item ($\mathop{\mathrm{char}}k=0$). $\pi$ is flat, and all fibers of $\pi$ are regular of dimension $0$.
			\item ($\mathop{\mathrm{char}}k=0$). $\pi$ is flat, and all fibers of $\pi$ are reduced of dimension $0$.
		\end{enumerate}
	\end{definition}

	\section{Flat Morphism with Locally Constant Finite Geometric Fibers} \label{sect:reduced-map-finite-etale}
	In this section, we will show the following: a quasi-finite flat morphism with locally constant geometric fibers between $k$-varieties ($\mathop{\mathrm{char}}k=0$) becomes finite \'etale after reduction. We shall establish some basic properties first.	
	
	\begin{proposition}\label{prop-qff-implies-ff} %
		A quasi-finite flat $k$-morphism with locally constant geometric fibers between $k$-varieties is finite.        
	\end{proposition}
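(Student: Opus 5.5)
The plan is to deduce this directly from Proposition~\ref{prop-grothendieck}(i) together with the fact that finiteness is local on the target. Let $\pi:X\to Y$ be quasi-finite, flat, with locally constant geometric fiber cardinality $n(y)=\#X_{\overline{y}}$, where $X,Y$ are $k$-varieties.

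First I will check the standing hypotheses of Proposition~\ref{prop-grothendieck}. Since $X$ and $Y$ are of finite type over $k$ and $Y$ is noetherian, $\pi$ is of finite type, hence locally of finite presentation. Since $X$ is separated over $k$, $\pi$ is separated: it is the first factor of $X\to X\times_k Y\to Y$, the graph morphism is a closed immersion because $Y$ is separated, and the projection is the base change of $X\to\spec k$. By assumption $\pi$ is flat and quasi-finite. Moreover, for quasi-finite $\pi$ each geometric fiber is a finite discrete scheme, so its number of geometric connected components equals the number of its geometric points. Hence the function $n(y)$ of Proposition~\ref{prop-grothendieck} is exactly the geometric-fiber cardinality in our hypothesis.

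Next, fix $y_0\in Y$. By local constancy, $n$ is constant on an open neighborhood $V\ni y_0$. Part (i) of Proposition~\ref{prop-grothendieck} then gives an open neighborhood $V_{y_0}$ of $y_0$ such that $\pi^{-1}(V_{y_0})\to V_{y_0}$ is proper. A proper quasi-finite morphism is finite (Zariski's Main Theorem / EGA IV 8.11.1), which Grothendieck's statement already packages. Thus $\{V_{y_0}\}_{y_0\in Y}$ is an open cover of $Y$ over each member of which $\pi$ is finite. Finiteness is local on the target, so $\pi$ is finite.

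I expect no real obstacle. The only points requiring care are the separatedness and finite-presentation bookkeeping, and the identification of connected components with points in quasi-finite fibers. The proof does not use characteristic zero.
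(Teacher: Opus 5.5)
Your proposal is correct and is essentially the paper's proof: the paper also applies Proposition~\ref{prop-grothendieck}(i) near each $y\in Y$ to get finiteness of $\pi$ over a neighborhood, then concludes because finiteness is local on the target. The bookkeeping you add is exactly what the paper's one-line argument leaves implicit, namely separatedness and local finite presentation of $\pi$, and the fact that geometric connected components of quasi-finite fibers are geometric points.
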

	\begin{proof}
		Denote the morphism by $f:X\to Y$. Since $n(y)$ is locally constant, for any $y\in Y$ there is a neighborhood $U$ of $y$ such that $f^{-1}(U)\to U$ is finite by Proposition \ref{prop-grothendieck}.%
	\end{proof}
	
	\begin{proposition}\label{prop:finite-etale-has-constant-fibers}
		A finite \'etale $k$-morphism $f:X\to Y$ of $k$-varieties is quasi-finite, flat and has locally constant geometric fibers.
	\end{proposition}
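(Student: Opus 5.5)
Finite implies quasi-finite, and étale includes flatness by definition, so the only real content is that the geometric fiber cardinality $n(y)$ is locally constant. I will first pass to the situation where $f$ is finite locally free. A finite morphism is affine, and $Y$ is a $k$-variety, hence noetherian. So over an affine open $V=\spec A\subseteq Y$ we have $f^{-1}(V)=\spec B$ with $B$ a finite $A$-module. Flatness then makes $f_*\mathcal{O}_X$ a finite flat module over a noetherian ring, hence locally free. Shrinking $V$ around a given $y_0$, we may therefore assume $B$ is free of some rank $d$ over $A$, and that rank is constant on the connected component.

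Next I compute the geometric fiber over a geometric point $\overline{y}:\spec\Omega\to Y$ lying over $y\in V$. It is $\spec(B\otimes_A\Omega)$, where $B\otimes_A\Omega$ is an $\Omega$-algebra of dimension $d$. By the first characterization of étaleness, this fiber is regular of dimension $0$, so it is a reduced finite $\Omega$-scheme. Since $\Omega$ is algebraically closed, a reduced finite $\Omega$-algebra is a product of copies of $\Omega$. Its number of points is therefore exactly its $\Omega$-dimension, so $n(y)=d$.

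As a result, $n$ is constant on $V$, which is an open neighbourhood of the arbitrary point $y_0$, and so $n$ is locally constant. This is the whole argument.

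The only step needing care is the passage from "finite flat over noetherian" to "locally free of locally constant rank." This is standard: finitely presented flat modules are projective, and projective modules are locally free. Alternatively, I could invoke Proposition~\ref{prop-grothendieck}(ii): $f$ is proper and flat, and every fiber is geometrically reduced, so $n(y)$ is constant near each $y_0$. That route avoids the rank computation entirely, and I would mention it as the quickest path.
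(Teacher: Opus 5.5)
Your proof is correct, and your closing ``alternative'' is exactly the paper's proof. The paper notes that a finite \'etale $f$ is proper and flat with geometrically reduced fibers, then cites Proposition~\ref{prop-grothendieck}(ii) (EGA IV 15.5.9). That result gives local constancy of the number of geometric connected components, which for finite fibers is the number of geometric points.

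Your main argument is a genuinely different, more hands-on route. You use that $f_*\mathcal{O}_X$ is finite flat over a noetherian base, hence locally free of some rank $d$. You then use that a reduced finite algebra over an algebraically closed $\Omega$ is $\Omega^d$, so $n(y)=d$ on a neighbourhood of $y_0$.

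What your route buys is a self-contained argument that avoids EGA's semicontinuity machinery. It also identifies $n(y)$ explicitly with the local degree $\rank f_*\mathcal{O}_X$. This is the same principle the paper uses in the opposite direction in Step~5 of Theorem~\ref{thm-reduced-g-etale-is-etale}: a finite fiber is reduced exactly when its degree equals its number of geometric points.

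What the paper's route buys is brevity. The price of yours is that it is tailored to finite morphisms, whereas the cited result covers proper flat morphisms with geometrically reduced fibers in general; that extra generality is not needed for this statement.
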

	\begin{proof}
		The map $f$ is clearly proper and flat. By the \'etaleness, the geometric fibers are reduced. Therefore by Proposition \ref{prop-grothendieck}, the function $n(y)$ is locally constant.
	\end{proof}
	
	Now we are ready for the main result in this section.
	\begin{theorem}\label{thm-reduced-g-etale-is-etale}
		Let $k$ be a field of characteristic 0. Suppose $X,Y$ are $k$-varieties and $\varphi:X\to Y$ is a quasi-finite flat $k$-morphism with locally constant geometric fibers, then the reduced map $\varphi_{\mathrm{red}}: X_{\mathrm{red}} \to Y_{\mathrm{red}}$ is finite \'etale.
		
	\end{theorem}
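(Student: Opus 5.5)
By Proposition~\ref{prop-qff-implies-ff}, $\varphi$ is finite, and finite flat of finite presentation means locally free. Since finiteness and étaleness are local on $Y$, I will work over an affine open $Y=\spec A$ with $X=\spec B$ and $B$ free of rank $d$ over $A$. The first reduction is to replace $A$ by $A_{\mathrm{red}}$ and $B$ by $B\otimes_A A_{\mathrm{red}}$. This base change is still finite flat with the same geometric fibers, and it has the same reduction as $X$. So I may assume $A$ is reduced, and it remains to show that $B/\nil(B)$ is finite étale over $A$. Shrinking $Y$, I also assume the geometric fiber cardinality is a constant $n$.

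The tool is the trace pairing $\langle b,b'\rangle=\tr_{B/A}(bb')$, which corresponds to an $A$-linear map $\tau:B\to B^\vee=\mathrm{Hom}_A(B,A)$. Its formation commutes with base change, since trace is compatible with base change for free modules.

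First I compute the rank at a geometric point $\overline{y}$. Write $B\otimes\Omega=\prod_{i=1}^n B_i$ with each $B_i$ local Artinian with residue field $\Omega$. The nilradical consists of nilpotent elements, and multiplication by a nilpotent element is a nilpotent endomorphism, so it has trace zero. Hence the nilradical lies in the kernel of the pairing. On the quotient $\Omega^n$ the pairing is $\sum_i (\dim B_i)\, x_iy_i$, which is nondegenerate because $\operatorname{char} k=0$ makes each $\dim B_i$ invertible. Therefore the kernel of the fiber pairing is exactly the nilradical of $B\otimes\Omega$, and the rank is $n$ at every point of $Y$. The same computation applies over $\kappa(y)$ after extending scalars.

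Next, a matrix over a reduced ring whose rank is the same constant $n$ at every point has locally free cokernel of rank $d-n$. This follows by Fitting ideals: the relevant Fitting ideal is pointwise the unit ideal, and the next one vanishes pointwise, hence is nilpotent, hence is zero since $A$ is reduced. Consequently the image of $\tau$ is locally a direct summand, and the kernel $K=\ker\tau$ is a direct summand of $B$ of rank $d-n$. Because $K$ is a summand, $K\otimes\kappa(y)\to B\otimes\kappa(y)$ is injective and lands in the fiber kernel. Both have dimension $d-n$, so $K\otimes\kappa(y)$ is exactly the kernel of the fiber pairing, namely the nilradical of the fiber.

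Now I identify $K$ with $\nil(B)$. Nilpotent elements have nilpotent multiplication maps, so $\nil(B)\subseteq K$. Conversely, let $b\in K$. Its image in every fiber is nilpotent, so $b$ lies in every prime of $B$, because every prime lies over some point $y$ and hence corresponds to a prime of the fiber. Thus $b$ is nilpotent, and $K=\nil(B)$.

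It follows that $B_{\mathrm{red}}=B/K$ is locally free of rank $n$, hence finite flat over $A=A_{\mathrm{red}}$. Its fiber $B_{\mathrm{red}}\otimes\kappa(y)=(B\otimes\kappa(y))/\nil$ is reduced of dimension $0$. By criterion (6) in the definition of étale morphisms (characteristic $0$), $\varphi_{\mathrm{red}}$ is finite étale.

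The step I expect to be the main obstacle is the constant-rank argument. It must give that the kernel of $\tau$ is a direct summand whose formation commutes with passage to fibers. This is exactly where reducedness of the base is used, and it is why the reduction to $A_{\mathrm{red}}$ comes first.
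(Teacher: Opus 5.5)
Your proof is correct and follows essentially the same route as the paper. You use the trace pairing, constant fiber rank $n$, and reducedness of $A$ to kill the next Fitting ideal, so $\ker\tau$ is a rank-$(d-n)$ direct summand; then $\ker\tau=\nil B$ and $B/\nil B$ is locally free with reduced fibers. The differences are cosmetic: you get the fiber rank and fiber kernel from the Artinian decomposition $\prod B_i$ and the invertibility of $\dim B_i$ in characteristic $0$, where the paper cites Basu--Pollack--Roy and uses Newton's identities with Cayley--Hamilton. You also get reducedness of the fibers of $B/K$ from $K\otimes\kappa(y)=\nil(B\otimes\kappa(y))$, where the paper counts degree against the number of geometric points.

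One small correction to your closing remark: the inclusion $\nil(B)\subseteq K$ also needs $A$ reduced, since over $A=k[\epsilon]/(\epsilon^2)$ multiplication by $\epsilon$ is nilpotent with trace $\epsilon\neq 0$. Your argument is still fine, because you reduced to $A$ reduced before that step; it is just not the only place reducedness is used.
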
	%
	
	\begin{proof}We divide the proof into a few steps.
		
		\textit{Step 1. Reduce to the affine case.}
		First notice that $\varphi$ is actually already finite by Proposition \ref{prop-qff-implies-ff} (so is $\varphi_{\mathrm{red}}$).    
		The question is local, so we may assume $X=\spec B$, $Y=\spec A$ are affine varieties and $B=A[x_1,\ldots,x_n]/I$ is a free $A$-module of finite rank $r>0$ (recall that for a finite module over a noetherian ring, being flat, being projective and being locally free are all equivalent \cite[\href{https://stacks.math.columbia.edu/tag/00NX}{Lemma 00NX}]{stacks-project}). Since $\# X_{\overline{y}}$ is locally constant, we may further assume that the geometric fiber cardinality is a constant $s$. Replacing $A$, $B$ with $A_{\mathrm{red}}=A/\nil A$ and $B\otimes_A A_{\mathrm{red}}$ respectively, we may even assume that $A$ is reduced. 
		\medskip
		
	\textit{Step 2. The trace pairing.}
	Recall that for any given $b\in B$, the multiplication map $L_b:B\to B, b'\mapsto bb'$ is an $A$-module endomorphism. Because $B$ is finite free over $A$, there is a trace map $\tr_{B/A}: B\to A$, sending $b$ to the trace of $L_{b}$. Consider the trace-pairing map
	$$
	 \begin{array}{rcl}
	\Theta:	B&\to& B^\vee:=\operatorname{Hom}_A(B,A) \\
		b&\mapsto&\bigl(b'\mapsto	\tr_{B/A}(bb')\bigr).
	\end{array}
	$$

	It is a classical result that the rank of $\Theta$ at $p\in Y=\spec A$ is exactly the number of distinct points in the geometric fiber $X_{\overline{p}}$ (see e.g.\ \cite[Theorem 4.102]{basu2008algorithms}).
	Thus $\Theta$ has constant fiber rank $s$.
	\medskip
	
	\textit{Step 3. The kernel of $\Theta$ is locally free.}
	Set $M=\operatorname{coker}\Theta.$
	The module $M$ has the explicit finite presentation
	$$
	B\xrightarrow{\Theta}B^\vee\longrightarrow M\longrightarrow 0.
	$$
	Choose bases of $B$ and $B^\vee$, and let $T$ be the resulting
	$r\times r$ matrix. Since $T$ has rank $s$ over every residue field, every
	$(s+1)\times(s+1)$ minor of $T$ lies in every prime ideal of $A$ but at every prime there is some non-zero $s\times s$ minor. Therefore, the Fitting ideals of $M$ are
	$$
	F_{r-s}(M)=A,\ 
	F_{r-s-1}(M)=0.
	$$
	It follows that $M$ is locally free of rank $r-s$ \cite[\href{https://stacks.math.columbia.edu/tag/07ZD}{Lemma 07ZD}]{stacks-project}.	
	
	Since $M$ is finite locally free, it is projective. Therefore the exact
	sequence
	$$
	0\longrightarrow \mathop{\mathrm{im}}\Theta
	\longrightarrow B^\vee
	\longrightarrow M
	\longrightarrow 0
	$$
	splits by the property of projective modules (see e.g.\ \cite[Theorem A3.1]{eisenbud2013commutative} or \cite[\href{https://stacks.math.columbia.edu/tag/013C}{Lemma 013C}]{stacks-project}). Thus $\mathop{\mathrm{im}}\Theta$ is a direct summand of the finite free module
	$B^\vee$, and it is finite projective, equivalently finite locally free. The rank of $\mathop{\mathrm{im}}\Theta$ is $\rank B^\vee -\rank M=s$.
	
	Set	$K=\ker\Theta$. Then the exact sequence
	$$
	0\longrightarrow K
	\longrightarrow B
	\longrightarrow\mathop{\mathrm{im}}\Theta
	\longrightarrow 0
	$$
	also splits for the same reason. Consequently $K$ is finite locally free of rank $r-s$.
	\medskip
	
	\textit{Step 4. The kernel of $\Theta$ is the nilradical of $B$.}
	We next show that
	$K=\nil B$.
	
	First, $K$ is an ideal of $B$: if $b\in K$ and $b'\in B$, then the functional $\Theta(bb')$ is zero: for every
	$b''\in B$, we have
	$$
		\Theta(bb')(b'')=\tr\nolimits_{B/A}((bb')b'')=\tr\nolimits_{B/A}(b(b'b''))=\Theta(b)(b'b'')=0.
	$$
	
	On the one hand, $\nil B\subseteq K$. If $b\in B$ is nilpotent, then $bb'$ is also nilpotent for every $b'\in B$, and the multiplication by $bb'$ is a nilpotent endomorphism. This implies that the trace of $L_{bb'}$ is zero (this can be checked by passing to the fibers and using reducedness of $A$). Therefore $b\in K$.
	
	On the other hand, $\nil B\supseteq K$. Let $q\in \spec B$ and $p=q\cap A \in \spec A$. We will show that every element $b\in K$ evaluates to $0$ in $\kappa_q$. Since $b$ is in the radical of the trace pairing $\Theta$ and trace pairing commutes with base change, we can see that 
	$$b(p)\in B(p):=B\otimes_A \kappa_p$$ is in the radical of the trace pairing $\Theta(p): B(p) \to \mathop{\mathrm{Hom}}_{\kappa_{p}}(B(p),\kappa_p)$. We claim that $b(p)\in \nil B(p)$. Indeed, for every $i\geq 1$: $$b^i\in K,\quad  \tr\nolimits_{B(p)/\kappa_p}\left(b^i(p)\right)=0.$$ By Newton's identity, the characteristic polynomial of $b(p)$ is then $\lambda^r=0$. Therefore Cayley-Hamilton theorem implies that $(b(p))^r=0$ and $b(p)\in \nil B(p)$. Since $b(p)$ is nilpotent and evaluating at $q$ factors through $B(p)$, we can conclude that $b(q)=0\in \kappa_q$. By the formal Nullstellensatz, $b\in \nil B$.
	
	\textit{Step 5. The \'etaleness.}
	Now we know that $K=\nil B$. So 
	$$B_{\mathrm{red}}=B/\nil B=B/K\cong \mathop{\mathrm{im}}\Theta$$ is a finite locally free module of rank $s$. The flatness is established. Since the rank of $B_{\mathrm{red}}$ as an $A$-module is $s$, for any $p\in\spec A$, the degree of the fiber $(X_{\mathrm{red}})_p$ is $s$, which is also the number of distinct geometric points. Therefore the fiber must be reduced, hence regular of dimension $0$. By definition, $X_{\mathrm{red}}$ is \'etale over $Y$.

	\end{proof}

	\begin{example} \label{example:double-cover-of-cusp}
		Suppose $k$ is a field of characteristic 0. Consider $$X=\spec k[p,q,x]/\ideal{4p^3+27q^2,x^3+px+q}, Y=\spec k[p,q]/\ideal{4p^3+27q^2}$$ and the canonical projection $\pi:X\to Y$. Clearly $\pi$ is finite and flat. See Figure \ref{fig:depressed-cubic-over-discriminant}. %
		
		\begin{figure}[hbtp]
			\centering
			\includegraphics[width=0.6\linewidth]{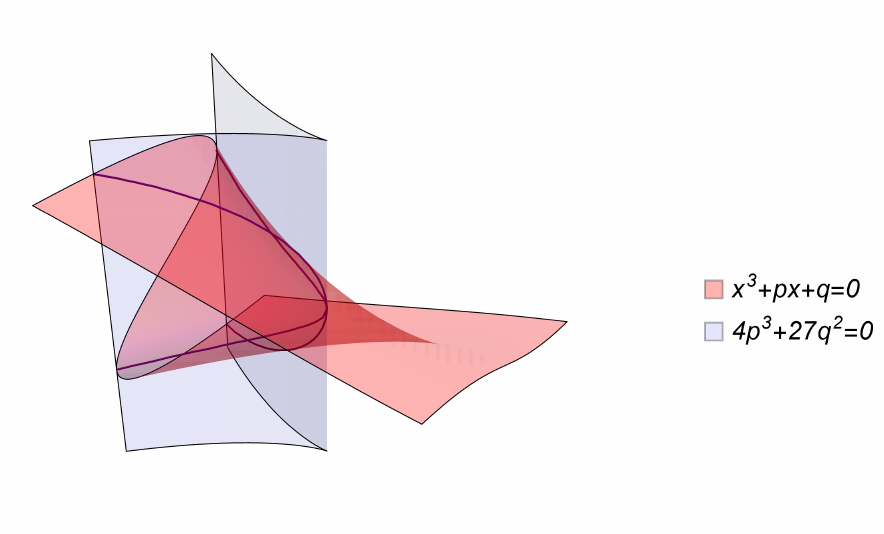}
			\caption{$X$ as the intersection of zeros of defining equations} 
			\label{fig:depressed-cubic-over-discriminant}
		\end{figure}
		
		Let $U=Y_{\mathrm{reg}}=Y\backslash V(p,q)$, then $\pi|_{\pi^{-1}(U)}: X\times_Y U\to U$ is finite, flat and $\# X_{\overline{y}}=2$ for all $y\in U$. So by Theorem \ref{thm-reduced-g-etale-is-etale}, the reduced map $(X\times_Y U)_{\mathrm{red}}\to U_{\mathrm{red}}$ is finite \'etale.
		
		Notice that in general, even the flatness is not preserved under reduction. In this example, $\pi_{\mathrm{red}}: X_{\mathrm{red}}\to Y_{\mathrm{red}}$ is not flat! In fact, by a direct computation, one can show that $X_{\mathrm{red}}=\spec k[p,q,x]/\ideal{6 p x^2-9 q x+4 p^2,x^3+ p x+q}$ and all fibers of $\pi_{\mathrm{red}}$ have degree $2$, except the fiber over the origin, which has degree $3$. %
	\end{example}

	\begin{example}
		When $\mathop{\mathrm{char}} k=p>0$, the same conclusion does not hold anymore. Let $X=\spec \mathbb{F}_2[t][x]/\ideal{x^2+t}$ and $Y=\spec \mathbb{F}_2[t]$, then $X$ and $Y$ are both reduced, the canonical projection $\pi:X\to Y$ is flat and the cardinality of geometric fiber is always $1$. But $\pi$ is not smooth.
	\end{example}   
	
	The following corollary is immediate from Proposition \ref{prop:finite-etale-has-constant-fibers} and Theorem \ref{thm-reduced-g-etale-is-etale}.
	
	\begin{corollary} \label{cor:equiv-characterization-of-finite-etale}
		Let $k$ be a field of characteristic $0$. Suppose $X,Y$ are $k$-varieties and the $k$-morphism $\varphi: X\to Y$ is quasi-finite and flat. Then the following are equivalent:
		\begin{enumerate}
			\item $\varphi_{\mathrm{red}}:X_{\mathrm{red}}\to Y_{\mathrm{red}}$ is finite \'etale.
			\item The geometric fiber cardinality $n(y)$ is a locally constant function on $Y$.
		\end{enumerate}
	\end{corollary}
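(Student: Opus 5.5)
The implication (2)$\Rightarrow$(1) is Theorem \ref{thm-reduced-g-etale-is-etale} applied directly to $\varphi$, since $\varphi$ is quasi-finite and flat by hypothesis. The real work is (1)$\Rightarrow$(2). For that direction I would transfer the conclusion of Proposition \ref{prop:finite-etale-has-constant-fibers} from $\varphi_{\mathrm{red}}$ back to $\varphi$.

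Assume $\varphi_{\mathrm{red}}:X_{\mathrm{red}}\to Y_{\mathrm{red}}$ is finite \'etale. By Proposition \ref{prop:finite-etale-has-constant-fibers}, the geometric fiber cardinality $n_{\mathrm{red}}$ of $\varphi_{\mathrm{red}}$ is locally constant on $Y_{\mathrm{red}}$. It then remains to compare $n_{\mathrm{red}}$ with the function $n$ attached to $\varphi$.

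The underlying topological spaces of $Y_{\mathrm{red}}$ and $Y$ coincide, and so do those of $X_{\mathrm{red}}$ and $X$. Take a point $y$ and a geometric point $\overline{y}:\spec\Omega\to Y$ over it. Since $\Omega$ is a field, it is reduced, so $\overline{y}$ factors uniquely through $Y_{\mathrm{red}}$.

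I would then compare the two geometric fibers. The closed immersion $X_{\mathrm{red}}\to X$ is a homeomorphism, i.e.\ a universal homeomorphism; it is surjective and a nil-immersion, and both properties are stable under base change. Hence
\[
(X_{\mathrm{red}})\times_{Y_{\mathrm{red}}}\spec\Omega \;=\; X_{\mathrm{red}}\times_Y\spec\Omega \;\longrightarrow\; X\times_Y\spec\Omega
\]
is a closed immersion defined by a nil ideal, so the two sides have the same underlying set of points. The first equality holds because $Y_{\mathrm{red}}\to Y$ is a monomorphism.

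Consequently $n(y)=n_{\mathrm{red}}(y)$ for every $y$, and $n$ is locally constant on the common underlying space. This gives (2).

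The only point requiring care is the claim that base change of the nil-immersion $X_{\mathrm{red}}\hookrightarrow X$ is again bijective on points. I would verify this affine-locally: $B/\nil B\otimes_A\Omega$ is the quotient of $B\otimes_A\Omega$ by the image of $\nil B$, which is generated by nilpotent elements and hence lies in the nilradical. Beyond this, the equivalence is formal.
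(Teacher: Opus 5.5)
Your proposal is correct and follows the paper's route. The implication (2)$\Rightarrow$(1) is Theorem~\ref{thm-reduced-g-etale-is-etale}, and (1)$\Rightarrow$(2) is Proposition~\ref{prop:finite-etale-has-constant-fibers} applied to $\varphi_{\mathrm{red}}$. The paper calls this "immediate" and leaves implicit that geometric fiber cardinalities do not change under reduction; your argument for that step, via the monomorphism $Y_{\mathrm{red}}\to Y$ and base change of the nil-immersion $X_{\mathrm{red}}\hookrightarrow X$, is sound.
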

	
	We end this section with a corollary that concerns the geometric fiber cardinality of a subvariety.
	
	\begin{corollary}\label{cor:subvariety-covering}
		Let $k$ be a field of characteristic 0. Suppose $X,Y$ are $k$-varieties and $\varphi:X\to Y$ is a quasi-finite flat $k$-morphism with locally constant geometric fibers. Let $i:Z\hookrightarrow X$ be a closed subscheme of $X$ and $j:U=X\backslash Z\hookrightarrow X$ be the complement with the open subscheme structure.
		
		Suppose that $\psi=\varphi\circ i: Z\hookrightarrow X \to Y$ is flat. Then 
		\begin{enumerate}
			\item The composition $\psi$ is also a finite flat morphism with locally constant geometric fibers, and $\psi_{\mathrm{red}}:Z_{\mathrm{red}}\to Y_{\mathrm{red}}$ is finite \'etale.        	
			\item Similarly, $\varphi \circ j$ is finite flat with locally constant geometric fibers, and $\varphi_{\mathrm{red}} \circ j_{\mathrm{red}}: U_{\mathrm{red}}\to Y_{\mathrm{red}}$ is finite \'etale.        	
			\item The closed immersion $i_{\mathrm{red}}: Z_{\mathrm{red}}\hookrightarrow X_{\mathrm{red}}$ and the open immersion $j_{\mathrm{red}}:U_{\mathrm{red}}\hookrightarrow X_{\mathrm{red}}$ are also finite \'etale.
		\end{enumerate}                 
	\end{corollary}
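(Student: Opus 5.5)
The plan is to deduce everything from Theorem \ref{thm-reduced-g-etale-is-etale}, applied separately to $\psi$ and to $\varphi\circ j$. Part (1) follows once $\psi$ is known to have locally constant geometric fibers. For part (2), $\varphi\circ j$ is automatically quasi-finite and flat, so again only local constancy of its geometric fiber cardinality needs proof. The key device is a counting identity. Write $n_X,n_Z,n_U:Y\to\mathbb{N}$ for the geometric fiber cardinalities of $\varphi$, $\psi$ and $\varphi\circ j$. Since $X_{\overline y}$ is set-theoretically the disjoint union of $Z_{\overline y}$ and $U_{\overline y}$, we have
$$n_X(y)=n_Z(y)+n_U(y)\quad\text{for all } y\in Y.$$

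First I check the semicontinuity of each summand. By Proposition \ref{prop-qff-implies-ff}, $\varphi$ is finite. A closed immersion is finite, so $\psi$ is finite, and it is flat by hypothesis. Part (i) of Proposition \ref{prop-grothendieck} then shows that $n_Z$ is lower semi-continuous. The morphism $\varphi\circ j$ is an open immersion, which is flat, followed by the flat map $\varphi$. Hence it is flat, quasi-finite and separated, and Proposition \ref{prop-grothendieck} shows that $n_U$ is lower semi-continuous as well.

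Next comes the main step. Fix $y_0\in Y$ and choose a neighborhood $V$ of $y_0$ on which three things hold: $n_X$ is constant, $n_Z\ge n_Z(y_0)$, and $n_U\ge n_U(y_0)$. For $y\in V$ we get
$$n_Z(y)+n_U(y)=n_Z(y_0)+n_U(y_0).$$
Combined with the two inequalities, this forces both to be equalities. So $n_Z$ and $n_U$ are locally constant. I expect this step to be the only real obstacle. The trap is to argue directly with the closed subscheme $Z_{\mathrm{red}}\subseteq X_{\mathrm{red}}$, since a closed subscheme of an étale scheme need not be étale. The flatness hypothesis on $\psi$ is exactly what supplies lower semicontinuity for the closed piece, while the open piece gets it for free.

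Now Theorem \ref{thm-reduced-g-etale-is-etale} applies to $\psi$ and to $\varphi\circ j$, and Proposition \ref{prop-qff-implies-ff} gives their finiteness. This proves (1) and (2), including that $\varphi\circ j$ is finite.

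For (3), the maps $Z_{\mathrm{red}}$, $U_{\mathrm{red}}$ and $X_{\mathrm{red}}$ are all finite étale over $Y_{\mathrm{red}}$. The morphisms $i_{\mathrm{red}}$ and $j_{\mathrm{red}}$ are $Y_{\mathrm{red}}$-morphisms between them. A $Y_{\mathrm{red}}$-morphism between étale $Y_{\mathrm{red}}$-schemes is étale, by the standard cancellation property. A morphism $g$ with $h\circ g$ finite and $h$ separated is finite, since it is proper and quasi-finite. Applying these two facts gives that $i_{\mathrm{red}}$ and $j_{\mathrm{red}}$ are finite étale.
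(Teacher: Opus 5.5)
Your proof is correct, but it handles the key local-constancy step differently from the paper.

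\textbf{The paper's route.} The paper sandwiches $n_Z$.
\begin{enumerate}
\item The lower bound is the same as yours: Proposition \ref{prop-grothendieck}(i) applied to the finite flat morphism $\psi$.
\item For the upper bound, the paper first applies Theorem \ref{thm-reduced-g-etale-is-etale} to $\varphi$. The fibers of $Z\times_X X_{\mathrm{red}}$ are then reduced and zero-dimensional. Hence $n_Z(y)$ equals the $\kappa_y$-dimension of the fiber of the coherent sheaf $\varphi_*(i_*\mathcal{O}_Z\otimes\mathcal{O}_{X_{\mathrm{red}}})$. Upper semicontinuity of such fiber dimensions gives the bound.
\item Only after this is $n_U=n_X-n_Z$ deduced.
\end{enumerate}

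\textbf{Your route.} You get the upper bound on $n_Z$ from the lower semicontinuity of $n_U$, which holds because the open piece is automatically flat, together with $n_X=n_Z+n_U$. This treats $Z$ and $U$ symmetrically.

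\textbf{Comparison.} Your argument is shorter. It is characteristic-free at this stage: it uses neither the main theorem nor reducedness of fibers until the final application to $\psi$ and $\varphi\circ j$. It also shows, for free, that $n_Z=n_X-n_U$ is upper semicontinuous for every closed subscheme $Z$. The paper obtains that only via the reduced structure.

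The price is small. For $\varphi\circ j$ you use Proposition \ref{prop-grothendieck}(i) in its quasi-finite, not a priori proper, form, which is a deeper input than the finite case. The paper needs that input only for the finite morphism $\psi$, though the proposition as stated covers your use.

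Part (3) is the same cancellation argument in both proofs.
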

	\begin{proof}~
		
		\begin{enumerate}
			\item We can compute $\#Z_{\overline{y}}$ in two different ways. Notice that $\varphi:X\to Y$ is actually finite by Proposition \ref{prop-qff-implies-ff}.
			
			On the one hand, $\psi:Z\to Y$ is finite and flat, so $\#Z_{\overline{y}}$ is lower semi-continuous by Proposition \ref{prop-grothendieck}.%
			
			On the other hand, $\# Z_{\overline{y}}=\#(Z\times_X X_{\mathrm{red}})_{\overline{y}}$. By Theorem \ref{thm-reduced-g-etale-is-etale}, $\varphi_{\mathrm{red}}:X_{\mathrm{red}}\to Y_{\mathrm{red}}$ is finite \'etale. So the fiber $(X_{\mathrm{red}})_y$ is zero-dimensional and regular, thus reduced. This implies that the subscheme $(Z\times_X X_{\mathrm{red}})_y$ is also zero-dimensional and reduced. So $\#(Z\times_X X_{\mathrm{red}})_{\overline{y}}=\deg (Z\times_X X_{\mathrm{red}})_y$ and we have
			$$\deg (Z\times_X X_{\mathrm{red}})_y=\dim_{\kappa_y} \left(\varphi_*\left(i_*\mathcal{O}_Z \otimes_{ \mathcal{O}_X} {(\mathcal{O}_{X})}_{\mathrm{red}}\right)_y \otimes_{\mathcal{O}_{Y,y}} \kappa_y\right)$$  
			is upper semi-continuous \cite[Exercise II.5.8]{hartshorne2013algebraic}.
			
			Therefore $\#Z_{\overline{y}}$ is an integer-valued continuous function, thus a locally constant function. By Theorem \ref{thm-reduced-g-etale-is-etale} again, $\psi_{\mathrm{red}}$ is finite \'etale.
			\item Observe that $\# U_{\overline{y}}=\# X_{\overline{y}}-\# Z_{\overline{y}}$. Therefore $\# U_{\overline{y}}$ is also locally constant. Notice that the open immersion $j$ is quasi-finite and flat. So $\varphi \circ j$ is a quasi-finite flat morphism with locally constant geometric fibers, hence finite by Proposition \ref{prop-qff-implies-ff}. Once more, Theorem \ref{thm-reduced-g-etale-is-etale} implies that $\varphi_{\mathrm{red}} \circ j_{\mathrm{red}}: U_{\mathrm{red}}\to Y_{\mathrm{red}}$ is finite \'etale.
			\item  Combining \cite[I. Corollary 3.6]{milne1980etale} and \cite[Exercise II.4.8]{hartshorne2013algebraic} (or \cite[\href{https://stacks.math.columbia.edu/tag/02GW}{Lemma 02GW}, \href{https://stacks.math.columbia.edu/tag/035D}{Lemma 035D}]{stacks-project}), we see that there is a cancellation law of finite \'etale morphisms. Namely, if $g\circ f$ and $g$ are finite \'etale morphisms between $k$-varieties, then so is $f$. Therefore the closed immersion $i_{\mathrm{red}}: Z_{\mathrm{red}}\to X_{\mathrm{red}}$ is finite \'etale, as $\psi_{\mathrm{red}}= \varphi_{\mathrm{red}} \circ i_{\mathrm{red}}$ and $\varphi_{\mathrm{red}}$ are both finite \'etale. Similarly, $j_{\mathrm{red}}$ is also finite \'etale.
		\end{enumerate}                         
	\end{proof}		
	
	\section{Semi-algebraic Covering Maps} \label{sect:semi-algebraic-covering}
	In this section, we will consider the case where $k=R$ is a real closed field and we will show that a quasi-finite flat morphism of $R$-varieties with locally constant geometric fibers induces a covering map on the $R$-rational points in the Euclidean topology.		
	
	We first prove the covering criterion and compare it with previous constructions. We then discuss its complex	analogue and establish a version for subsets obtained from closed and open subschemes. Finally, we show that an arbitrary morphism admits a finite stratification into covering strata and strata with positive-dimensional fibers, and derive a sample-point consequence.
	
	\begin{theorem}\label{thm-covering-map}
		Let $X$, $Y$ be two $R$-varieties. Suppose that $\pi: X\to Y$ is a quasi-finite flat $R$-morphism with locally constant geometric fibers. Let $y\in Y(R)$. Then there exists $r\in \mathbb{N}$, semi-algebraically connected Euclidean neighborhood $U\subseteq Y(R)$ of $y$ and semi-algebraic continuous functions $\xi_1,\ldots,\xi_r:U\to X(R)$ such that
		\begin{enumerate}
			\item The real fiber over every $y\in U$, has cardinality $r$.
			\item $\xi_i$ are sections of $\pi$, that is $\pi\circ\xi_i=\mathrm{id}_U$ for $i=1,\ldots,r$.
			\item For every $y\in U$: $i\ne j$ implies $\xi_i(y)\ne \xi_j(y)$.
		\end{enumerate}
		Therefore $\pi_R$ is a covering map in the Euclidean topology.
	\end{theorem}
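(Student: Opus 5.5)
We first reduce to the finite \'etale case. Since $X(R)=X_{\mathrm{red}}(R)$ and $Y(R)=Y_{\mathrm{red}}(R)$ as semi-algebraic sets with the same Euclidean topology, and $\pi_R=(\pi_{\mathrm{red}})_R$, Theorem \ref{thm-reduced-g-etale-is-etale} lets us replace $\pi$ by $\pi_{\mathrm{red}}$. So we may assume $\pi$ is finite \'etale. The statement is local on $Y(R)$, so we fix $y\in Y(R)$ and an affine open $\spec A\ni y$ over which $B$ is free of rank $s$. Since each connected component of the semi-algebraic set $Y(R)$ meets only finitely many affine opens, we may work inside $\spec A$.

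Next we reduce to a corank $1$ projection, as the introduction indicates. The fiber $B\otimes_A\kappa_y$ is a finite \'etale $\kappa_y$-algebra of degree $s$, hence a product of separable extensions. Because $\kappa_y$ is infinite, it has a primitive element $\bar t$, obtained as a generic $\kappa_y$-linear combination of the coordinates $x_i$ (lifted to $t\in B$ with coefficients in $R$). The characteristic polynomial $\chi_t(T)\in A[T]$ of $L_t$ is monic of degree $s$. At $y$ it equals the minimal polynomial of $\bar t$ and has nonzero discriminant, so after shrinking to $D(\operatorname{disc}\chi_t)$ we get a map $A[T]/\chi_t\to B$. This map is an isomorphism at $y$ by Nakayama, since $1,\bar t,\ldots,\bar t^{s-1}$ span the fiber; hence it is surjective near $y$, and it is an isomorphism because both sides are free of rank $s$. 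Thus, locally around $y$, $X\cong V(\chi_t)\subseteq\mathbb{A}^1_Y$, and the coordinates $x_i$ are given by polynomials in $t$ with coefficients in $A$ (after inverting the discriminant), which is a continuous semi-algebraic recipe.

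Now we construct the sections. On the Euclidean open set $W=\{y'\in Y(R):\operatorname{disc}\chi_t(y')\neq 0\}$, the polynomial $\chi_t(y',T)$ has $s$ distinct complex roots. The number $r$ of real roots is locally constant, since complex roots depend continuously on the coefficients and nonreal roots come in conjugate pairs that cannot collide without making the discriminant vanish. We take $U$ to be a semi-algebraically connected neighborhood of $y$ in $W$, for instance the connected component of a small ball intersected with $W$. On $U$ we order the real roots $\theta_1(y')<\cdots<\theta_r(y')$. These are continuous semi-algebraic functions, by continuity of roots or by the standard CAD section argument. We then set $\xi_i(y')=(y',\theta_i(y'))$, transported to $X(R)$ via the isomorphism. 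This gives (1), (2) and (3).

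Finally, the covering property. The preimage $\pi_R^{-1}(U)$ is the disjoint union of the graphs $\xi_i(U)$. These are pairwise disjoint by (3), each is closed in $\pi_R^{-1}(U)$, and there are finitely many of them, so each is also open. Each $\xi_i$ is a homeomorphism onto its image with inverse $\pi_R$. The main obstacle is the primitive-element step: we must ensure that the local isomorphism $X\cong V(\chi_t)$ holds as schemes near $y$, and is not merely an isomorphism of fibers. I would handle this via the Nakayama and rank-equality argument above.
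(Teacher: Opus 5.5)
Your proof is correct and follows the paper's route. You reduce to the finite \'etale case via Theorem \ref{thm-reduced-g-etale-is-etale}, present $\pi$ locally near $y$ as $\spec A[\lambda]/\ideal{u}\to\spec A$ with $u$ monic and separable, and take the ordered real roots of $u$ as sections; the only difference is that you prove the local monogenic form directly (primitive element of the fiber at the $R$-point $y$, Cayley--Hamilton, Nakayama and rank comparison) and use continuity of roots, where the paper cites \cite[Proposition 18.4.5]{grothendieck1967elements} and \cite[Theorem 5.7]{chen2023geometric}.
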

	\begin{proof}
		We may replace $\pi:X\to Y$ with $\pi_{\mathrm{red}}:X_{\mathrm{red}}\to Y_{\mathrm{red}}$ and assume that $\pi$ is a finite \etale morphism by Theorem \ref{thm-reduced-g-etale-is-etale}. By \cite[Proposition 18.4.5]{grothendieck1967elements}, locally $\pi$ is of the form $\spec A[\lambda]/\ideal{u}\to \spec A$, where $u\in A[\lambda]$ is a monic polynomial and $u'$ is invertible modulo $u$ (this is a variant of the fact that \'etale morphisms are locally standard \'etale). Now $\spec A[\lambda]/\ideal{u}\to \spec A$ is a finite free morphism of affine $R$-varieties with constant geometric fibers, so we can define $\xi_i$ to be the $i$-th largest real root of $u$ in a Euclidean neighborhood of $y$ and they are continuous semi-algebraic functions by \cite[Theorem 5.7]{chen2023geometric}.
	\end{proof}
	
	\begin{remark}
		For readers familiar with locally semi-algebraic spaces, it can be immediately concluded that $\pi_R$ is a covering map after applying Theorem \ref{thm-reduced-g-etale-is-etale}, see \cite[Example 5.5]{delfs1984introduction}.
	\end{remark}
	
	\begin{remark}
		One possible way to understand Theorem \ref{thm-covering-map} is to compare it with the Ehresmann's fibration theorem \cite{ehresmann1950connexions}. Actually,
		Theorem \ref{thm-covering-map} implies a semi-algebraic weaker variant of Ehresmann's fibration theorem. Suppose that $\pi:X\to Y$ is a proper morphism of non-singular $R$-varieties with surjective tangent maps $T_\pi: T_x X\to T_{\pi(x)} Y \otimes_{\kappa_y} \kappa_x$, and suppose further that all fibers of $\pi$ are finite, then $\pi_R$ is a covering map in the Euclidean topology. Indeed, such a morphism is  quasi-finite, proper and \'etale. Therefore locally there are semi-algebraic sections to $\pi_R$ by Theorem \ref{thm-covering-map}.
	\end{remark}		
	
	The assumption in Theorem \ref{thm-covering-map} is purely algebro-geometric and independent of the order structure of $R$, yet it is strong enough to conclude the local existence of semi-algebraic sections. It might be instructive to recall Example \ref{examples-of-covering-maps} and Example \ref{example-non-covering} in the introduction. Of course, such a simple condition is not always necessary, as the map $x\mapsto x^3$ is a semi-algebraic homeomorphism but the geometric fiber cardinality drops when $x=0$. Also, one may add embedded points to destroy the flatness without changing the map on the topological level. %
	
	Still, Theorem \ref{thm-covering-map} is quite strong among the results of the same kind in the sense that there is no assumption on $X$ or $Y$, so non-reduced, reducible, singular varieties with irreducible components of various dimensions are all allowed. Also, we can work over arbitrary real closed field instead of just real numbers $\mathbb{R}$. Most previous results producing covering maps are from a differential view, relying on smooth structures. Therefore $X,Y$ are required to be equidimensional smooth $\mathbb{R}$-varieties. %
	
	We first compare Theorem~\ref{thm-covering-map} with the discriminant variety of Lazard and Rouillier. Their construction removes explicit loci from the base so that the remaining morphism is finite, flat and unramified. Theorem~\ref{thm-covering-map} therefore recovers their covering result as follows.

	\begin{corollary}[Lazard-Rouillier {cf.\ \cite[Theorem 1]{lazard2007solving}}]\label{cor-disc-var}
		Let $X=\spec B,Y=\spec A$ be two $R$-varieties. Suppose $X$ is a distinguished open $D(f)$ of a closed subscheme $V(I)\subseteq \mathbb{A}_Y^n=\spec A[x_1,\ldots,x_n]$, and the projection $\pi:X\hookrightarrow \mathbb{A}_Y^n \to Y$ is dominant.
		
		Let $\overline{V(I)}\subseteq \mathbb{P}_Y^n$ be the projective closure of $V(I)$, and let $H=\mathbb{P}_Y^n \backslash \mathbb{A}_Y^n$ be the hyperplane at infinity.
		Define
		\begin{enumerate}[label=(\alph*)]
			\item $W_\infty \subseteq Y$ to be the scheme-theoretic image of $\overline{V(I)}\cap H$ in $Y$.
			\item $W_{\mathcal{F}}$ to be the scheme-theoretical image of $V(\ideal{f}+I)$,
			\item $W_{\mathrm{sing}}\subseteq Y$ to be the singular locus of $Y$,
			\item $W_{\mathrm{sd}}$ to be the scheme-theoretical image of irreducible components of $X$ of dimension $<\dim X$,              
			\item $W_{\mathrm{c}_1}$ to be the scheme-theoretical image of the singular locus of $X$,
			\item $W_{\mathrm{c}_2}$ to be the scheme-theoretical image of the ramification locus $\Delta(X/Y)=\mathop{\mathrm{Supp}}\Omega_{X/Y}$, the support of the sheaf of relative differentials on $X$.
		\end{enumerate}
		
		Then $\pi_R:X(R)\to Y(R)$ is a covering map outside the union $W=W_\infty \cup W_{\mathrm{sing}} \cup W_{\mathrm{sd}} \cup W_{\mathcal{F}} \cup W_{\mathrm{c}_1} \cup W_{\mathrm{c}_2}$. Moreover, if $R=\mathbb{R}$, then $\pi_R$ is an analytic covering outside $W$.
	\end{corollary}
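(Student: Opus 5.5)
The plan is to reduce to Theorem \ref{thm-covering-map}: set $V=Y\setminus W$ (open, since each $W_\bullet$ is closed) and $X_V=\pi^{-1}(V)$. The goal is to show that $\pi_V:X_V\to V$ is quasi-finite, flat, with locally constant geometric fibers. Theorem \ref{thm-covering-map} then gives that $\pi_R$ restricted to $X(R)\setminus\pi_R^{-1}(W(R))\to V(R)$ is a covering map. Since $V$ and $X_V$ are smooth over $\mathbb{R}$ and the local sections are cut out by an étale morphism, the analytic implicit function theorem will make them analytic when $R=\mathbb{R}$.

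The first step is finiteness. Over $V$ we have removed $W_\infty$, so $\overline{V(I)}\times_Y V\subseteq \mathbb{A}^n_V$ is closed in $\mathbb{P}^n_V$, hence proper over $V$. Since it is affine, it is finite. Removing $W_{\mathcal F}$ makes $D(f)\cap \pi^{-1}(V)=V(I)\times_Y V$, which is closed in a finite scheme and therefore finite. So $X_V\to V$ is finite, and in particular quasi-finite.

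The second step is flatness, using miracle flatness. $V$ avoids $W_{\mathrm{sing}}$, so it is regular. $X_V$ avoids the images of the singular locus of $X$ and of the lower-dimensional components, so it is regular and equidimensional of dimension $\dim X$. Dominance and finiteness give $\dim X=\dim Y$ locally on the relevant components; I would check this on each irreducible component of $Y$ that meets the image, using that finite dominant maps preserve dimension. A finite morphism from a Cohen–Macaulay scheme to a regular scheme with zero-dimensional fibers and the correct equidimensionality is flat \cite[Theorem 23.1]{matsumura1989commutative}. I expect this step to be the main obstacle. The difficulty is that $Y$ may have components of different dimensions, and $X$ may be empty over some of them. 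I would handle this by working locally at each $y\in V$: where the fiber is empty, flatness is trivial because $X_V$ misses a neighborhood by finiteness; elsewhere, dominance plus equidimensionality of $X$ gives the dimension equality.

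The third step is local constancy of the geometric fibers. Since $W_{\mathrm{c}_2}$ has been removed, $\Omega_{X_V/V}=0$, so $X_V\to V$ is unramified. Together with flatness it is finite étale, and Proposition \ref{prop:finite-etale-has-constant-fibers} shows the geometric fiber cardinality is locally constant. Theorem \ref{thm-covering-map} then applies directly. In fact, because $\pi_V$ is already étale, no reduction step is needed.
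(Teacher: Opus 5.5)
Your proposal is correct and follows essentially the same route as the paper. Properness of the projective closure away from $W_\infty$, together with affineness, gives finiteness; miracle flatness (regular equidimensional source over a regular base of the same pure dimension) gives flatness; removing $W_{\mathrm{c}_2}$ gives unramifiedness; and Theorem \ref{thm-covering-map}, plus the inverse/implicit function theorem for the analytic claim, finishes the proof. The only difference is that the paper notes directly that a finite dominant morphism onto $Y\setminus W$ is surjective, so your separate handling of points with empty fibers is unnecessary, though harmless.
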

	\begin{proof}
		First, we retreat to the finite case. Clearly, the morphism $\overline{V(I)}\hookrightarrow \mathbb{P}_Y^n \to Y$ is projective, hence proper \cite[Theorem II.4.9]{hartshorne2013algebraic} (or \cite[\href{https://stacks.math.columbia.edu/tag/01WC}{Lemma 01WC}]{stacks-project}). Then the base change to $Y\backslash W_\infty$ is also proper. Notice that $\overline{V(I)} \times_Y (Y\backslash W_\infty)=V(I) \times_Y (Y\backslash W_\infty)$. Therefore, the projection $V(I) \times_Y (Y\backslash W_\infty) \to Y \backslash W_\infty$ is affine and proper, hence finite \cite[Exercise II.4.6]{hartshorne2013algebraic} (or \cite[\href{https://stacks.math.columbia.edu/tag/01WN}{Lemma 01WN}]{stacks-project}). After another fiber product by the open subscheme $Y\backslash W_{\mathcal{F}}$, we may now assume that $\pi:X\to Y$ is a finite surjective morphism.
		
		Next, we are going to show that $\pi$ becomes flat after removing $W_{\mathrm{sing}}$, $W_{\mathrm{sd}}$ and $W_{\mathrm{c}_1}$. Recall the miracle flatness theorem \cite[Theorem 23.1]{matsumura1989commutative} (or \cite[\href{https://stacks.math.columbia.edu/tag/00R4}{Lemma 00R4}]{stacks-project}): for a morphism $\pi:X\to Y$ with a Cohen-Macaulay source $X$ and a regular base $Y$, $\pi$ is flat if and only if for all $x\in X$ and $y=\pi(x)\in Y$, 
		$$\dim \mathcal{O}_{X,x}=\dim \mathcal{O}_{Y,y}+\dim \mathcal{O}_{X_y,x}.$$ 
		Now after removing $W_{\mathrm{sing}}$, $W_{\mathrm{sd}}$ and $W_{\mathrm{c}_1}$, our source $X$ is regular (hence Cohen-Macaulay by \cite[Theorem 17.8]{matsumura1989commutative} or \cite[\href{https://stacks.math.columbia.edu/tag/00NQ}{Lemma 00NQ}]{stacks-project}) and of pure dimension $d$, our base $Y$ is regular. Therefore to show the flatness, it suffices to show the equation in dimensions holds everywhere. Notice that $Y$ is also of pure dimension $d$ (choose an irreducible component $Y_i$ of $Y$, there must be an irreducible component $X_i$ of $X$, mapped onto $Y_i$ as $\pi$ is finite surjective, so $\dim X=\dim X_i=\dim Y_i$). Then we have 
		$$\begin{array}{lclr}
			\dim \mathcal{O}_{X,x} &=& \dim X-\dim \overline{\{x\}} & X \text{ equi-dimensional} \\
			&=&\dim Y-\dim \overline{\{y\}} & \pi \text{ finite surjective}\\
			&=&\dim \mathcal{O}_{Y,y} & Y \text{ equi-dimensional} \\
			&=&\dim \mathcal{O}_{Y,y}+\dim \mathcal{O}_{X_y,x} & X_y \text{ finite}
		\end{array}$$
		
		Finally, $W_{\mathrm{c}_2}$ is removed so $\pi$ is finite, flat, unramified. Therefore $\pi|_{\pi^{-1}(Y\backslash W)}: X\times_Y(Y\backslash W)\to Y\backslash W$ is finite \'etale . Our conclusion follows from Theorem \ref{thm-covering-map}.
		
		When $R=\mathbb{R}$, we can talk about analytic properties of morphisms. Since $\pi|_{\pi^{-1}(Y\backslash W)}: X\times_Y(Y\backslash W)\to Y\backslash W$ is a finite \'etale morphism between smooth varieties, its tangent map is surjective. It follows immediately from inverse function theorem that $\pi_R$ is a local diffeomorphism.
	\end{proof}
	
	\begin{remark} \label{remark:minimal-non-finite-etale-locus}
		The corollary can be sharpened. After removing $W_\infty$ and $W_{\mathcal{F}}$, we may assume that $\pi:X\to Y$ is a finite morphism. In this case, the flat locus $F\subseteq Y$ of $\pi$ is an open subset given explicitly by the Fitting ideals \cite[Corollary 7.3.13]{greuel2008singular}. Recall that $n(y)$ is lower semi-continuous on $F$ by Proposition \ref{prop-grothendieck}. %
		Therefore there is a biggest open subset $F'\subseteq F$ such that the $n(y)$ is locally constant on $F'$. On $F'$, $\pi$ induces a covering map (but not necessarily an analytic cover).
	\end{remark}

	The complex analogue of Theorem \ref{thm-covering-map} is also true. It can be proved using Weil restriction (restriction of scalars) (see, e.g.\ \cite[Section 4]{scheiderer2006real} for an introduction). Roughly speaking, the Weil restriction $\mathop{\mathrm{Res}_{C/R}}$ of a $C$-variety is the $R$-variety obtained by separating the real and imaginary parts of the coordinates.
	
	\begin{theorem}
		Let $X$, $Y$ be two $C$-varieties. Suppose that $\pi: X\to Y$ is a quasi-finite flat $C$-morphism with locally constant geometric fibers. Then $\pi_C$ is a covering map in the Euclidean topology. %
	\end{theorem}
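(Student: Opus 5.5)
The plan is to use Weil restriction to reduce to Theorem \ref{thm-covering-map}. I would not apply Weil restriction to the original morphism; I would first replace $\pi$ by a finite \'etale one. By Proposition \ref{prop-qff-implies-ff}, $\pi$ is finite, and by Theorem \ref{thm-reduced-g-etale-is-etale} (with $k=C$, which has characteristic $0$), $\pi_{\mathrm{red}}:X_{\mathrm{red}}\to Y_{\mathrm{red}}$ is finite \'etale. Reduction does not change $C$-rational points or their Euclidean topology, so we may assume from the start that $\pi$ is finite \'etale.

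Next, consider the Weil restrictions $X'=\mathop{\mathrm{Res}_{C/R}}X$, $Y'=\mathop{\mathrm{Res}_{C/R}}Y$ and $\pi'=\mathop{\mathrm{Res}_{C/R}}\pi$. These exist as $R$-varieties: $C/R$ is finite free, and $X,Y$ are separated of finite type, hence quasi-projective locally. To be safe, I would work locally on affine opens $Y=\spec A$ and $X=\spec B$, since covering is a local property on the base and $\pi$ is affine. By construction $X'(R)=X(C)$ and $Y'(R)=Y(C)$, and these identifications are homeomorphisms for the Euclidean topologies, because real and imaginary parts of coordinates give the standard identification $C^n\cong R^{2n}$. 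Under them, $\pi'_R$ is $\pi_C$.

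The main step, and the one I expect to be the real obstacle, is to check that Weil restriction along $C/R$ preserves finite \'etaleness. I would argue through the base-change description $\mathop{\mathrm{Res}_{C/R}}(Z)\times_R C\cong Z\times_C Z^\sigma$, where $\sigma$ is complex conjugation. Under this isomorphism, $\pi'_C$ becomes $\pi\times\pi^\sigma:X\times_C X^\sigma\to Y\times_C Y^\sigma$. This is a product of two finite \'etale morphisms, hence finite \'etale. Finiteness and \'etaleness both descend along the faithfully flat extension $R\to C$, so $\pi'$ is finite \'etale. By Proposition \ref{prop:finite-etale-has-constant-fibers}, $\pi'$ is then quasi-finite, flat and has locally constant geometric fibers. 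Alternatively, I could cite the standard fact (BLR, \emph{N\'eron Models}, \S7.6) that Weil restriction preserves \'etale and proper morphisms.

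Theorem \ref{thm-covering-map} applied to $\pi'$ then shows that $\pi'_R=\pi_C$ is a covering map in the Euclidean topology, which proves the statement. Everything apart from the descent check in the previous paragraph is formal.
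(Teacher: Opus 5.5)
Your proposal is correct and follows the paper's proof essentially step by step. Like the paper, you reduce to the finite \'etale case via Theorem \ref{thm-reduced-g-etale-is-etale}, localize to affine opens, pass to the Weil restrictions $\mathrm{Res}_{C/R}$, and apply Theorem \ref{thm-covering-map}. The only difference is in one justification: you show that $\mathrm{Res}_{C/R}$ preserves finite \'etaleness via the base change $\mathrm{Res}_{C/R}(Z)\times_R C\cong Z\times_C Z^\sigma$ and descent along $R\to C$, whereas the paper simply cites Scheiderer's Propositions 4.9 and 4.10.1.
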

	\begin{proof}
		Replacing $\pi:X\to Y$ with the reduced map, we may assume that $\pi$ is finite and \'etale. Because the question is local, we may also assume that $X$, $Y$ are affine varieties. Since $C/R$ is a finite field extension, the Weil restriction $\mathop{\mathrm{Res}_{C/R}}$ exists for quasi-projective varieties \cite[Corollary 4.8.1]{scheiderer2006real}. Let $Z=\mathop{\mathrm{Res}_{C/R}}{X}$ and $W=\mathop{\mathrm{Res}_{C/R}}{Y}$ be the Weil restrictions of $X$ and $Y$ respectively, then $Z$ and $W$ are $R$-varieties whose $R$-rational points can be naturally identified with closed points of $X$ and $Y$. Also, the induced map $\mathrm{Res}_{C/R}(\pi):Z\to W$ is finite and \'etale \cite[Proposition 4.9, Proposition 4.10.1]{scheiderer2006real}. The proof is completed by applying Theorem \ref{thm-covering-map} to $\mathrm{Res}_{C/R}(\pi):Z\to W$.
	\end{proof}		
	
	Conversely, it is not true that every finite morphism of reduced complex algebraic varieties inducing a topological covering is \'etale (e.g.\ the normalization of the cuspidal curve). However, one can show that for a finite morphism $f:X\to Y$ of reduced $\mathbb{C}$-varieties, if the base $Y$ is normal and $f_\mathbb{C}$ is a topological covering, then $f$ must be \'etale. To see this, one first passes to the analytification $f^{\mathrm{an}}: X^{\mathrm{an}} \to Y^{\mathrm{an}}$ and by GAGA this is a finite holomorphic map of reduced complex analytic spaces with a normal base, which is locally a homeomorphism \cite[Expos\'e XII]{grothendieck1971revetements}. Then by a result of Grauert-Remmert \cite[Page 166]{grauert1984coherent}, this is actually locally biholomorphic, i.e.\ \'etale. By GAGA again, $f$ itself is \'etale. %
	
	We now return to morphisms over the real closed field $R$. In	applications, one is often interested not in all of $X(R)$, but in subsets obtained by imposing equations and nonvanishing conditions,	and ultimately sign conditions. The next theorem shows that the	covering property is inherited simultaneously by the Boolean pieces	determined by closed subschemes flat over the base.
	
	\begin{theorem} \label{thm-covering-map-of-subschemes}
		Let $X$, $Y$ be two $R$-varieties. Suppose that $\pi: X\to Y$ is a quasi-finite flat $R$-morphism with locally constant geometric fibers. Let $Z_1,\ldots,Z_m$ be closed subschemes of $X$ and set $U_i=X\backslash Z_i$ for $i=1,\ldots,m$. If $\pi|_{Z_i}:Z_i\hookrightarrow X\to Y$ is flat for $i=1,\ldots,m$, then for arbitrary intersection $W=\bigcap_{\alpha \in \Lambda} U_\alpha \cap \bigcap_{\beta \in \Lambda'} Z_\beta$, the restriction $\pi|_W$ induces a covering map between $W(R)$ and $Y(R)$ in the Euclidean topology.
	\end{theorem}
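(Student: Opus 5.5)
The plan is to reduce everything to the reduced setting, where each $Z_i$ becomes an open and closed piece of $X_{\mathrm{red}}$. Then every Boolean combination $W$ is a union of connected components, and the covering structure of Theorem \ref{thm-covering-map} restricts to it.

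\textbf{Step 1 (reduction).} Rational points do not see nilpotents, so $X(R)=X_{\mathrm{red}}(R)$, $Z_i(R)=(Z_i)_{\mathrm{red}}(R)$ and $U_i(R)=(U_i)_{\mathrm{red}}(R)$. The same holds for $Y$. Hence we may study the maps on rational points of the reduced schemes. Since each $\pi|_{Z_i}$ is flat, Corollary \ref{cor:subvariety-covering}(3) applies to each $Z_i$ separately. It shows that the closed immersion $(Z_i)_{\mathrm{red}}\hookrightarrow X_{\mathrm{red}}$ is finite \'etale.

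\textbf{Step 2 (clopen pieces).} An \'etale morphism is open, so the \'etale closed immersion $(Z_i)_{\mathrm{red}}\hookrightarrow X_{\mathrm{red}}$ has open image. Thus $|Z_i|$ is open and closed in $|X|$, and its complement $|U_i|$ is open and closed as well. On rational points, $Z_i(R)$ is the zero set of polynomial equations, hence closed in the Euclidean topology. I will also show it is open in $X(R)$. Since $|Z_i|$ is Zariski open, the subset $Z_i(R)$, which consists of the $R$-points of $X$ lying in $|Z_i|$, is Zariski open, hence Euclidean open. Consequently $Z_i(R)$ and $U_i(R)=X(R)\setminus Z_i(R)$ are both clopen in $X(R)$. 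A finite intersection of clopen sets is clopen, so $W(R)$ is a clopen subset of $X(R)$.

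\textbf{Step 3 (restricting the covering).} Let $y\in Y(R)$. Theorem \ref{thm-covering-map} gives a semi-algebraically connected neighborhood $U$ and continuous sections $\xi_1,\dots,\xi_r$ with pairwise distinct values. These satisfy $\pi_R^{-1}(U)=\bigsqcup_j \xi_j(U)$, and each sheet $\xi_j(U)$ is homeomorphic to $U$ via $\pi_R$. Each sheet is the image of a semi-algebraically connected set under a continuous semi-algebraic map, so it is semi-algebraically connected. Since $W(R)$ is a clopen semi-algebraic set, each sheet lies entirely in $W(R)$ or entirely in its complement. Hence $\pi_R^{-1}(U)\cap W(R)$ is the disjoint union of those sheets $\xi_j(U)$ contained in $W(R)$. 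This is exactly the covering condition for $\pi|_W$ at $y$, with empty preimages allowed by the paper's convention.

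An alternative route is to regard $W_{\mathrm{red}}$ itself as a clopen subscheme of $X_{\mathrm{red}}$. It is then finite \'etale over $Y_{\mathrm{red}}$, since the composite of an open and closed immersion with a finite \'etale map is finite \'etale, and one can apply Theorem \ref{thm-covering-map} to it directly.

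\textbf{Main obstacle.} The delicate step is Step 2. We must justify that an \'etale closed immersion is also open, and that a Zariski-clopen subset yields a Euclidean-clopen set of $R$-points. One also has to avoid the pitfall that a semi-algebraically connected set need not be connected over a general $R$. For this reason I work throughout with semi-algebraic connectedness, applied to the semi-algebraic clopen set $W(R)$.
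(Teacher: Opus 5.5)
Your proof is correct. It rests on the same algebraic input as the paper, Corollary \ref{cor:subvariety-covering}(3), which says $(Z_i)_{\mathrm{red}}\hookrightarrow X_{\mathrm{red}}$ is finite \'etale, but it uses that input differently.

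The paper stays on the scheme side. It records that the open immersions $(U_i)_{\mathrm{red}}\hookrightarrow X_{\mathrm{red}}$ are also finite \'etale. Since $W\hookrightarrow X$ is a fiber product of such immersions, it is finite \'etale, so $\pi|_W$ is finite \'etale after reduction. The paper then reapplies Theorem \ref{thm-covering-map} to $\pi|_W$. This is exactly the ``alternative route'' you sketch at the end.

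Your main route keeps only the topological consequence of \'etaleness, namely openness, so each $|Z_i|$ is clopen. You turn this into the fact that $W(R)$ is a clopen semi-algebraic subset of $X(R)$. You then restrict the covering of $X(R)$ sheet by sheet, using semi-algebraic connectedness.

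\begin{itemize}
\item \textbf{What your route gains.} It avoids stability of finite \'etale maps under fiber products and the bookkeeping of reduced structures on intersections. It also gives a slightly more uniform conclusion: one trivializing neighborhood $U$ of $y$ for $\pi_R$ works for all Boolean pieces $W$ at once, and the sheets of $W(R)$ are a subset of those of $X(R)$.
\item \textbf{What the paper's route gains.} It is purely formal, and it yields the stronger algebraic fact that $(\pi|_W)_{\mathrm{red}}$ is itself finite \'etale. Hence the local standard form from the proof of Theorem \ref{thm-covering-map} applies to $W$ directly.
\end{itemize}

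One detail should be stated explicitly in your Step 3. The sheets $\xi_j(U)$ are open in $\pi_R^{-1}(U)$ because they are finitely many pairwise disjoint sets. Each is closed, being $\{x\mid \xi_j(\pi_R(x))=x\}$ in a Hausdorff space.
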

	\begin{proof}
		By Theorem \ref{thm-reduced-g-etale-is-etale} and Corollary \ref{cor:subvariety-covering}, we may assume that $X,Y,Z_1,\ldots,Z_m$ are all reduced, $\pi$ is finite \'etale, and the closed (open respectively) immersions $Z_i\hookrightarrow X$ ($U_i\hookrightarrow X$ respectively) are finite \'etale. Then the immersion $W\hookrightarrow X$ is finite \'etale, as the product of finite \'etale morphisms is still finite \'etale. Then the composition $\pi|_W:W\hookrightarrow X \to Y$ is also a finite \'etale morphism and the conclusion follows from Theorem \ref{thm-covering-map}.
	\end{proof}
	\begin{remark}
		This generalizes \cite[Theorem 5.14]{chen2023geometric} (which itself is a generalization of \cite[Theorem 5]{collins1975quantifier} and \cite[Theorem 3.3]{arnon1984cylindrical}), using a different strategy (Corollary \ref{cor:subvariety-covering}).
	\end{remark}
	
	\begin{example} \label{example:double-cover-of-nodal-curve}
		This example is taken from \cite[Exercise III.10.6]{hartshorne2013algebraic}.
		Consider $$I=\ideal{y_1-x_1^2+1,y_2-x_1 x_2,x_2^2-(x_1^2-1)^2}, J=I\cap k[y_1,y_2]=\ideal{y_1^2(y_1+1)-y_2^2}.$$ 
		
		Set $X=\spec k[y_1,y_2,x_1,x_2]/I$ and $Y=\spec k[y_1,y_2]/J$. Then $Y$ is a nodal curve. The normalization of $Y$ coincides with the blow-up of $Y$ at its singular locus $(0,0)$. $X$ is the variety gluing two normalizations of $Y$ along the exceptional divisors. See Figure \ref{fig:finite-etale-cover}. %
		
		\begin{figure}[htbp]
			\centering              
			\begin{subfigure}[c]{0.35\textwidth}	\setlength{\belowcaptionskip}{-15pt}
				~
				\vspace{9mm}							
				\includegraphics[width=\linewidth]{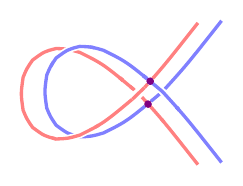}									
				~					
				\caption{Variety $X$}
				\label{fig:double-cover}
			\end{subfigure}
			~
			\begin{subfigure}[c]{0.25\textwidth}	
				~
				\includegraphics[width=\linewidth]{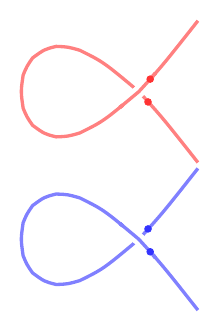}
				~					
				\caption{Two copies of normalization of $Y$} 
				\label{fig:double-cover-irr}
			\end{subfigure}
			~
			\begin{subfigure}[c]{0.35\textwidth} \setlength{\belowcaptionskip}{-9pt}
				~
				\vspace{7mm}
				\includegraphics[width=\linewidth]{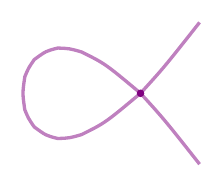}
				~					
				\caption{The nodal curve $Y$} 
				\label{fig:nodal-curve}
			\end{subfigure}
			\caption{A finite \'etale double cover of the nodal curve}\label{fig:finite-etale-cover}
		\end{figure}
		
		Later we shall see that the projection from $X$ to $Y$ is indeed a quasi-finite, flat morphism with locally constant geometric fibers in Example \ref{example:double-cover-of-nodal-curve-verification}. As a result, the projection induces a covering map on the real points by Theorem \ref{thm-covering-map}. Note $X$ and $Y$ are both singular, so results based on the theory of smooth manifolds like Lazard-Rouillier (Corollary \ref{cor-disc-var}) do not apply here.
		
		The last generator of $I$, $x_2^2-(x_1^2-1)^2$ can be factorized as $(x_2-x_1^2+1)(x_2+x_1^2-1)$. Each factor defines an irreducible component of $X$. Choose $x_2-x_1^2+1$ and let $Z$ be the corresponding irreducible component of $X$. Then the projection from $Z$ to $Y$ is the blow-up map. It is easy to see that the restriction of this projection on the regular points $Y_{\mathrm{reg}}$ is flat. Therefore, $Z\times_Y Y_{\mathrm{reg}} \to Y_{\mathrm{reg}}$ is again a finite flat morphism with locally constant geometric fibers and it induces covering maps on the real points (Corollary \ref{cor:subvariety-covering} and Theorem \ref{thm-covering-map-of-subschemes}).
		
		Because $Z$ is defined by $x_2-x_1^2+1=0$ on $X$ and $Z\times_Y Y_{\mathrm{reg}} \to Y_{\mathrm{reg}}$ is flat, the complement $U=X\backslash Z$ yields another covering over $Y_{\mathrm{reg}}$. In particular, the semi-algebraic set defined by
		$$\begin{aligned}
			\left(y_1-x_1^2+1=0 \wedge y_2-x_1 x_2=0\wedge x_2^2-(x_1^2-1)^2=0\wedge x_2-x_1^2+1>0\right)\\ \bigwedge \neg \left(y_1=0\wedge y_2=0\right)
		\end{aligned}$$
		is a topological cover over the real points of $Y_{\mathrm{reg}}$.
	\end{example}

Theorem~\ref{thm-covering-map-of-subschemes} still requires the ambient morphism to be quasi-finite and flat with locally constant geometric fibers. These conditions need not hold globally for an arbitrary morphism. Nevertheless, generic flatness and the lower semi-continuity of fiber dimension allow us to divide the base into finitely many strata. On each stratum, either the preceding covering result applies or every fiber is positive-dimensional.

\begin{proposition}[Covering Stratification]\label{cor-triviality}
	Let $\pi:X\to Y$ be an $R$-morphism of $R$-varieties. Let
	$Z_1,\ldots,Z_m$ be closed subschemes of $X$, and set
	$U_i=X\setminus Z_i$. Then there exists a finite stratification
	\[
	Y=\coprod_{i=1}^s Y_i
	\]
	by locally closed reduced subschemes such that, for every $i$, one
	of the following holds:
	\begin{enumerate}
		\item For every pair of subsets
		$\Lambda,\Lambda'\subseteq\{1,\ldots,m\}$, the restriction
		\[
		\left(
		\bigcap_{\alpha\in\Lambda}U_\alpha
		\cap
		\bigcap_{\beta\in\Lambda'}Z_\beta
		\right)\times_Y Y_i
		\longrightarrow Y_i
		\]
		induces a finite covering map on the $R$-rational points.
		\item Every fiber of
		\[
		X\times_Y Y_i\longrightarrow Y_i
		\]
		is positive-dimensional, and hence is not finite.
	\end{enumerate}
\end{proposition}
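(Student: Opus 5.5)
We argue by noetherian induction on closed subsets of $Y$. It suffices to show that every nonempty reduced closed subscheme $T\subseteq Y$ contains a nonempty open subset $V\subseteq T$ satisfying alternative (1) or (2) when used as a stratum. We then remove $V$ and apply the same reasoning to $T\setminus V$ with its reduced structure. Since $Y$ is noetherian, this process terminates after finitely many steps and produces the stratification. Base change to $T$ preserves all of the data: the closed subschemes $Z_i\times_Y T$ and their complements $U_i\times_Y T$ are again closed and open in $X\times_Y T$. So we may replace $Y$ by $T$ and assume that $Y$ is reduced. Shrinking further, we may also assume that $Y$ is irreducible, hence integral, with generic point $\eta$.

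We split into cases according to the generic fiber $X_\eta$. \emph{Case A: every point of $X_\eta$ lies on a positive-dimensional component, or $X_\eta$ is empty.} By upper semi-continuity of fiber dimension on the source (Chevalley), the set $\{x\in X:\dim_x X_{\pi(x)}=0\}$ is open in $X$. Its image is constructible and does not contain $\eta$, so its closure is a proper closed subset of $Y$. On the complement $V$ of that closure, every point of every fiber lies on a positive-dimensional component. If $X_\eta$ is empty, then $X\times_Y V$ is empty over a dense open $V$, and this falls under alternative (1) vacuously: empty sets give finite coverings. Otherwise alternative (2) holds on the open set where fibers are nonempty. Here we must be careful to separate empty fibers from positive-dimensional ones. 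By constructibility of the image of $\pi$, we can shrink $V$ so that either every fiber over $V$ is empty or every fiber is nonempty.

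\emph{Case B: $X_\eta$ has an isolated point.} Then its zero-dimensional part is nonempty. The main obstacle is that $X$ may have both isolated and positive-dimensional fiber components over the same base points, and then neither alternative applies directly. Alternative (2) only demands that every fiber be positive-dimensional as a scheme, which means $\dim X_y>0$, not that all components are positive-dimensional. I therefore first try to read (2) as ``$\dim X_y\ge 1$''. With this reading, Case B splits. If $\dim X_\eta\ge1$, then by semi-continuity $\dim X_y\ge 1$ on a dense open subset, and (2) holds there. If $\dim X_\eta=0$, then $\pi$ is quasi-finite over a dense open $V$ by the openness of the quasi-finite locus together with constructibility.

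In the quasi-finite case we apply generic flatness to the finitely many schemes $X$, $Z_1,\ldots,Z_m$ over the integral base. This gives a dense open $V$ over which $X$ and every $Z_i$ are flat. Next, the function $n(y)$ is lower semi-continuous on $V$ by Proposition \ref{prop-grothendieck}, so it attains its generic value on some open subset. Equivalently, $n(y)$ is constructible and generically constant, so after shrinking $V$ it is constant. Over this $V$, Theorem \ref{thm-covering-map-of-subschemes} applies with the stratum $Y_i=V$. This shows that every Boolean piece induces a covering map on $R$-points. Finiteness of these coverings follows from Proposition \ref{prop-qff-implies-ff}, since the morphism is finite and its fibers are therefore finite. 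This completes the induction step, and with it the construction of the stratification.
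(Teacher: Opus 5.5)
Your proof is correct, but it is organized differently from the paper's. The paper builds the stratification globally in three layers:
\begin{enumerate}
\item it takes the generic flatness stratifications of $X,Z_1,\ldots,Z_m$ and their common refinement;
\item it splits each flat stratum by fiber dimension, using the lower semi-continuity of $y\mapsto\dim X_y$ for flat morphisms of finite presentation, so the positive-dimensional part is open and its complement is quasi-finite;
\item it cuts the quasi-finite part into level sets of $n(y)$ via Proposition~\ref{prop-grothendieck}, then applies Theorem~\ref{thm-covering-map-of-subschemes}.
\end{enumerate}
You instead run a single noetherian induction that only needs generic statements over an integral base. You also separate the dimension cases \emph{before} flattening. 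For this you use Chevalley's upper semi-continuity of $x\mapsto\dim_x X_{\pi(x)}$ on the source, openness of the quasi-finite locus, and constructibility of images. The paper instead uses lower semi-continuity on the base, which requires flatness. Your route is more elementary and self-contained. The paper's route gives more explicit strata: the positive-dimensional strata are open in the flattening strata, and the covering strata are $n$-level sets of the quasi-finite parts. That explicitness fits the effective computations of Section~\ref{sect:effective}.

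Two minor points. First, in the subcase $\dim X_\eta\ge 1$, ``by semi-continuity'' should be spelled out. The closed set $\{x:\dim_x X_{\pi(x)}\ge 1\}$ contains the closure of a point on a positive-dimensional component of $X_\eta$. The image of that closure is constructible and contains $\eta$, hence contains a dense open. Second, this same argument also handles your Case A, so the case split reduces to $X_\eta=\varnothing$, $\dim X_\eta=0$, and $\dim X_\eta\ge1$. Your reading of alternative (2) as $\dim X_y\ge 1$ is exactly the one the paper uses ($d_i(y)>0$).
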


\begin{proof}
	We first construct a stratification on which $X$ and all the $Z_j$ are flat over the base. Replace $Y$ by $Y_{\mathrm{red}}$, set $X_0=X$ and $X_j=Z_j$ for $1\leq j\leq m$, and, for every $0\leq j\leq m$, apply the generic flatness stratification \cite[\href{https://stacks.math.columbia.edu/tag/0H3Z}{Lemma 0H3Z}]{stacks-project} to
	\[
	X_j\longrightarrow Y.
	\]
	After taking reduced structures, write the resulting stratification as
	\[
	Y=\coprod_{\alpha\in A_j}S_{j,\alpha},
	\]
	where $X_j\times_Y S_{j,\alpha}\to S_{j,\alpha}$ is flat.
	
	We now take a common refinement of these stratifications. It consists of the nonempty scheme-theoretic intersections
	\[
	S_{\alpha_0,\ldots,\alpha_m}
	=
	\left(
	S_{0,\alpha_0}\times_Y\cdots\times_Y S_{m,\alpha_m}
	\right)_{\mathrm{red}}.
	\]
	Denote these strata by $S_1,\ldots,S_t$. Since flatness is preserved under base change,
	\[
	X\times_Y S_i\longrightarrow S_i
	\quad\text{and}\quad
	Z_j\times_Y S_i\longrightarrow S_i
	\]
	are flat for every $i$ and $j$.
	
	We next separate the finite fibers from the positive-dimensional fibers. Fix $S_i$ and consider
	\[
	d_i:S_i\longrightarrow\{-\infty,0,1,\ldots\},
	\qquad
	y\longmapsto\dim (X\times_Y S_i)_y,
	\]
	where the empty fiber has dimension $-\infty$. Since $X\times_Y S_i\to S_i$ is flat and of finite presentation, $d_i$ is lower semi-continuous \cite[\href{https://stacks.math.columbia.edu/tag/0D4H}{Lemma 0D4H}]{stacks-project}. Hence $S_{i,>0}=\{y\in S_i\mid d_i(y)>0\}$ is open in $S_i$, while	$S_{i,0}=S_i\setminus S_{i,>0}$
	is closed. We endow both with their reduced subscheme structures.
	
	On $S_{i,>0}$, every fiber of $X\times_Y S_{i,>0}\longrightarrow S_{i,>0}$ is positive-dimensional. Thus the second alternative holds.
	
	It remains to treat $S_{i,0}$. Every fiber of $X\times_Y S_{i,0}\longrightarrow S_{i,0}$ is finite, where empty fibers are allowed. Therefore this morphism is quasi-finite. By Proposition~\ref{prop-grothendieck}, we may further stratify $S_{i,0}$ so that its geometric fiber cardinality is locally constant on every resulting stratum.
	
	Let $T$ be one of these strata. Flatness of $X$ and all the $Z_j$ is preserved under base change. Hence Theorem~\ref{thm-covering-map-of-subschemes} shows that, for every pair $\Lambda,\Lambda'\subseteq\{1,\ldots,m\}$, the restriction
	\[
	\left(
	\bigcap_{\alpha\in\Lambda}U_\alpha
	\cap
	\bigcap_{\beta\in\Lambda'}Z_\beta
	\right)\times_Y T
	\longrightarrow T
	\]
	induces a covering map on the $R$-rational points (if both $\Lambda,\Lambda'$ are empty, then the intersection is interpreted as $X$). Its fibers are finite because the ambient morphism is quasi-finite. Thus the first alternative holds.
\end{proof}

	The covering stratification is particularly useful when the original morphism is quasi-finite. In that case, the positive-dimensional alternative cannot occur, so every stratum is a covering stratum. The path-lifting property then allows sample points on the base strata to be lifted to sample points meeting every semi-algebraically connected component of the corresponding Boolean pieces of $X(R)$.

	\begin{corollary}
		Let $f:X\to Y$ be a quasi-finite $R$-morphism of $R$-varieties.
		Let $Z_1,\ldots,Z_m$ be closed subschemes of $X$, and set
		$U_j=X\setminus Z_j$. There exists a finite stratification
		$
		Y=\coprod_{i=1}^sY_i
		$
		by locally closed reduced subschemes with the following property.
		
		For every pair $\Lambda,\Lambda'\subseteq\{1,\ldots,m\}$, set
		\[
		W=
		\bigcap_{\alpha\in\Lambda}U_\alpha
		\cap
		\bigcap_{\beta\in\Lambda'}Z_\beta.
		\]
		If $S_i\subseteq Y_i(R)$ meets every semi-algebraically connected
		component of $Y_i(R)$, then
		\[
		\bigcup_{i=1}^s
		\left(W\times_Y Y_i\right)(R)\cap f_R^{-1}(S_i)
		\]
		meets every semi-algebraically connected component of $W(R)$.
		
		In particular, finding sample points on the semi-algebraically
		connected components of $W(R)$ reduces to finding sample points on
		the semi-algebraically connected components of the $Y_i(R)$ and
		solving the finite fibers over these points.
	\end{corollary}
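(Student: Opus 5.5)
The plan is to take the stratification $Y=\coprod_{i=1}^s Y_i$ from Proposition~\ref{cor-triviality}, applied to $f$ and $Z_1,\ldots,Z_m$, and then use path lifting on each stratum.

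\emph{Step 1: every stratum is a covering stratum.} Since $f$ is quasi-finite, every fiber of $X\times_Y Y_i\to Y_i$ is finite, possibly empty. So alternative (2) of Proposition~\ref{cor-triviality} occurs only on strata whose fibers are all empty. On such a stratum we have $(W\times_Y Y_i)(R)=\emptyset$, and the empty map is trivially a covering. Hence on every stratum, $(W\times_Y Y_i)(R)\to Y_i(R)$ is a finite covering map. If the proposition's construction is preferred, observe that for quasi-finite $f$ the set $S_{i,>0}$ is empty, so only alternative (1) arises.

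\emph{Step 2: decompose $W(R)$.} Because the $Y_i$ partition $Y$ set-theoretically, the sets $(W\times_Y Y_i)(R)$ partition $W(R)$ via $W(R)=\coprod_i f_R^{-1}(Y_i(R))\cap W(R)$. Fix a semi-algebraically connected component $D$ of $W(R)$. Then $D$ meets some piece $(W\times_Y Y_i)(R)$. Choose a point $x$ there, and let $E$ be the semi-algebraically connected component of $(W\times_Y Y_i)(R)$ containing $x$; note $E\subseteq D$. Let $B$ be the semi-algebraically connected component of $Y_i(R)$ containing $f(x)$. By hypothesis there is a point $b\in S_i\cap B$.

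\emph{Step 3: path lifting.} Join $f(x)$ to $b$ inside $B$ by a semi-algebraic path $\gamma:[0,1]\to B$. This is possible because semi-algebraically connected sets are semi-algebraically path connected. Next, lift $\gamma$ through the covering $(W\times_Y Y_i)(R)\to Y_i(R)$ to a path starting at $x$. The endpoint of the lift lies in $E\cap f_R^{-1}(S_i)$, and therefore lies in $D$. This shows the union in the statement meets every component $D$. The final sentence of the corollary then follows: the fibers over the points of $S_i$ are finite, so they are obtained by solving zero-dimensional systems.

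\emph{Main obstacle.} The main difficulty is path lifting over an arbitrary real closed field $R$, where $[0,1]$ is not compact and classical covering theory does not apply directly. I would use the local semi-algebraic sections $\xi_1,\ldots,\xi_r$ from Theorem~\ref{thm-covering-map}, applied to the finite étale reduction of the stratum, together with the following alternative to lifting. Let $P=\{t\in[0,1]: \text{some point of } E \text{ lies over } \gamma(t)\}$. This set is semi-algebraic. The local sections over connected neighborhoods show that $P$ is both open and closed in $[0,1]$. Since $[0,1]$ is semi-algebraically connected, $P=[0,1]$, so $1\in P$ and $E$ contains a point over $b$. Closedness uses that $E$ is a union of sheets over each small connected neighborhood: a sheet meeting $E$ lies entirely in $E$. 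This avoids constructing the lift explicitly.
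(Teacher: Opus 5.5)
Your proof is correct and follows the paper's route. You apply Proposition~\ref{cor-triviality}, note that quasi-finiteness excludes the positive-dimensional alternative, pass to a semi-algebraically connected component of a single piece $(W\times_Y Y_i)(R)$ (which lies inside a component of $W(R)$), and then transport a point along a semi-algebraic path in $Y_i(R)$ to a fiber over $S_i$. The only difference is that the paper simply cites the Delfs--Knebusch semi-algebraic unique path-lifting property for coverings over an arbitrary real closed field. Your open--closed argument on the semi-algebraic set $P\subseteq[0,1]$ is a valid self-contained substitute for that citation: it uses the local semi-algebraic sections of Theorem~\ref{thm-covering-map} and the fact that a connected sheet meeting $E$ lies in $E$.
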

	
	\begin{proof}
		Apply Proposition~\ref{cor-triviality} to $f$ and
		$Z_1,\ldots,Z_m$. Since $f$ is quasi-finite and quasi-finiteness is
		preserved under base change, the positive-dimensional alternative
		in that proposition cannot occur. We therefore obtain a finite
		stratification
		\[
		Y=\coprod_{i=1}^sY_i
		\]
		such that, for every $W$ as in the statement, the morphism
		\[
		f_{W,i}:W_i=W\times_Y Y_i\longrightarrow Y_i
		\]
		induces a finite covering map on the $R$-rational points.
		
		We first reduce the problem to the individual strata. Since
		\[
		W(R)=\coprod_{i=1}^s W_i(R),
		\]
		a set meeting every semi-algebraically connected component of every
		$W_i(R)$ also meets every semi-algebraically connected component of
		$W(R)$.
		
		Fix $i$, and let $C$ be a semi-algebraically connected component of
		$W_i(R)$. Choose $x_0\in C$ and set $y_0=f_{W,i}(x_0)$. Since
		$S_i$ meets every semi-algebraically connected component of
		$Y_i(R)$, there exists $y_1\in S_i$ lying in the same
		semi-algebraically connected component of $Y_i(R)$ as $y_0$.
		
		Every semi-algebraically connected semi-algebraic set is
		semi-algebraically path-connected
		\cite[Proposition 2.5.13]{bochnak2013real}. Hence there exists a
		semi-algebraic path
		\[
		\gamma:[0,1]\longrightarrow Y_i(R)
		\]
		with $\gamma(0)=y_0$ and $\gamma(1)=y_1$. By the unique path-lifting
		property
		\cite[Proposition 5.7]{delfs1984introduction}
		(or \cite[Theorem 3.3]{delfs1981semialgebraic}), there exists a
		semi-algebraic lift
		\[
		\widetilde{\gamma}:[0,1]\longrightarrow W_i(R)
		\]
		such that
		\[
		\widetilde{\gamma}(0)=x_0,
		\qquad
		f_{W,i}\circ\widetilde{\gamma}=\gamma.
		\]
		The path $\widetilde{\gamma}$ lies in $C$, while
		\[
		\widetilde{\gamma}(1)
		\in W_i(R)\cap f_R^{-1}(y_1).
		\]
		Thus the lifted fibers over $S_i$ meet $C$, completing the proof.
	\end{proof}
	
	The preceding stratification is geometric rather than computational: it does not yet provide equations for the covering strata. To make the construction effective in the affine case, we now compute the loci where finiteness, flatness, and étaleness fail.
	
	\section{Effective Aspect of the Criterion} \label{sect:effective}
	In this section, we are going to show that the condition in Theorem \ref{thm-covering-map} can be effectively verified, at least in the affine case. We assume some basic knowledge about Gr\"{o}bner basis in this section and we use $\mathtt{LC},\mathtt{LM},\mathtt{LT}$ to denote the leading coefficient, leading monomial and leading term of a polynomial. As there are algorithms to compute the radical ideal \cite{krick1991algorithm,eisenbud1992direct,laplagne2006algorithm}, it suffices to check finiteness and \'etaleness for the reduced map by Corollary \ref{cor:equiv-characterization-of-finite-etale}. 
	
	\subsection{Computational Settings}
	Assume that $k$ is an effective field where Gr\"obner basis computation is available. The setting in this section is as follows:
	
	Let $I\unlhd k[y_1,\ldots,y_m,x_1,\ldots,x_n]$ and $J=I\cap k[y_1,\ldots,y_m]$. Then 
	$$B= k[y_1,\ldots,y_m,x_1,\ldots,x_n]/I \text{ and } A=k[y_1,\ldots,y_m]/J$$ are $k$-algebras of finite type and $B$ is also an $A$-algebra. Let $T_x$ ($T_y$ resp.) be a monomial order in $x_i$'s ($y_i$'s resp.). Let $\mathcal{G}$ be a Gr\"{o}bner basis of $I$ in the global polynomial ring $k[y_1,\ldots,y_m,x_1,\ldots,x_n]$ with respect to the block order $T=T_x>T_y$. Additionally, let $\mathcal{G}_y=\mathcal{G} \cap k[y_1,\ldots,y_m]$ and $\mathcal{G}_x=\mathcal{G}\backslash \mathcal{G}_y$, so $\mathcal{G}_y$ is a Gr\"obner basis for $J$ and $\mathcal{G}_x$ consists of the defining equations for $B$ over $A$.
	
	\subsection{Non-finite Locus and Finiteness Check}
	First, we check the finiteness of the map. 
	
	Finiteness is a local property on the base. Therefore there is a biggest open subset of the base on which the restriction is finite. We will show that this set can be immediately read from the block order Gr\"{o}bner basis $\mathcal{G}$ in the affine case. As a result, the finiteness of a morphism can be effectively checked by the emptiness of the locus.
	
	\begin{theorem}[Non-finite Locus] \label{thm:non-finite-locus} The non-finite locus of $\spec B\to \spec A$ can be characterized in the following way:
		\begin{enumerate}
			\item The set $$J_i=\mathtt{LC}_{x_i}(I\cap k[y_1,\ldots,y_m,x_i])=\left\{\mathtt{LC}_{x_i}(f)\middle| f\in I\cap k[y_1,\ldots,y_m,x_i] \right\}$$ is an ideal in $k[y_1,\ldots,y_m]$.
			\item Let $W_i=V\left(J_i\right).$ Then $W=\bigcup_{i=1}^n W_i$ is the smallest closed subset of $\spec A$ such that the restriction $$\spec B\times_{\spec A} \left(\spec A\backslash W\right) \to \left(\spec A\backslash W\right)$$ is finite.
			\item Moreover, $J_i$ is generated by 
			$$\mathcal{G}_i=\left\{\mathtt{LC}_{T_x}(g)\middle|g\in \mathcal{G}, \mathtt{LM}_{T_x}(g)=x_i^{\alpha} \text{ for some }\alpha\geq 0\right\},$$
			the leading coefficients of members of Gr\"{o}bner basis $\mathcal{G}$ whose leading monomial is of the form $x_i^\alpha$. Also, $\mathcal{G}_i$ is a Gr\"{o}bner basis for $J_i$ with respect to $T_y$.
		\end{enumerate}
	\end{theorem}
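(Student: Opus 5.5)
The plan is to prove the three parts in order, with part (2) resting on a direct integrality argument and part (3) on the structure of block-order Gröbner bases. Throughout, $\mathtt{LC}_{x_i}(0)=0$.

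\emph{Part (1).} Take $f,g\in I\cap k[y_1,\ldots,y_m,x_i]$ of $x_i$-degrees $d,e$ and set $D=\max(d,e)$. Then $x_i^{D-d}f+x_i^{D-e}g$ lies in $I\cap k[y,x_i]$, and its coefficient of $x_i^D$ is $\mathtt{LC}_{x_i}(f)+\mathtt{LC}_{x_i}(g)$. If that sum is nonzero it is the leading coefficient; if it is zero, it equals $\mathtt{LC}_{x_i}(0)$. For $h\in k[y]$, $\mathtt{LC}_{x_i}(hf)=h\,\mathtt{LC}_{x_i}(f)$. So $J_i$ is an ideal. Elements of $J$ have $x_i$-degree $0$, hence $J\subseteq J_i$, and $J_i$ defines a closed subset of $\spec A$.

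\emph{Part (2).} Finiteness is local on the base, so the finite locus is an open set $F$, namely the union of all basic opens $D(h)$ over which $B_h$ is finite over $A_h$. It suffices to show that $D(h)\subseteq F$ if and only if $D(h)\cap W=\emptyset$.

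First suppose $p\notin W$. For each $i$ choose $f_i\in I\cap k[y,x_i]$ with $c_i=\mathtt{LC}_{x_i}(f_i)\notin p$, and put $h=\prod c_i$. In $B_h$ the relation $f_i/c_i=0$ is monic in $x_i$, so each $x_i$ is integral over $A_h$ and $B_h$ is finite over $A_h$.

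Conversely, suppose $B_h$ is finite over $A_h$. Then each $x_i$ satisfies a monic equation over $A_h$. Clearing denominators and lifting the coefficients to $k[y]$ gives a polynomial $h^N x_i^d+\cdots\in I\cap k[y,x_i]$. Hence $h\in\sqrt{J_i}$ for all $i$, and $D(h)\cap W=\emptyset$. Therefore $F=\spec A\setminus W$, which is exactly the minimality claim.

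\emph{Part (3).} I would prove two inclusions and then a leading-term statement.

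The first inclusion is $\ideal{\mathcal{G}_i}\supseteq J_i$, together with the Gröbner property. Let $0\neq f\in I\cap k[y,x_i]$ have $x_i$-degree $d$ and $c=\mathtt{LC}_{x_i}(f)$. Under the block order $T_x>T_y$, $\mathtt{LM}_T(f)=x_i^d\,\mathtt{LM}_{T_y}(c)$. Since $\mathcal{G}$ is a Gröbner basis, some $g\in\mathcal{G}$ has $\mathtt{LM}_T(g)$ dividing this monomial. Then $\mathtt{LM}_{T_x}(g)=x_i^\alpha$, so $g$ contributes an element of $\mathcal{G}_i$, and $\mathtt{LM}_{T_y}(\mathtt{LC}_{T_x}(g))$ divides $\mathtt{LM}_{T_y}(c)$. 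This shows that the leading monomials of $\mathcal{G}_i$ generate $\mathtt{LM}(J_i)$.

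The second inclusion is $\mathcal{G}_i\subseteq J_i$. Once it holds, the standard criterion gives that $\mathcal{G}_i$ is a Gröbner basis of $J_i$, and in particular generates it.

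The second inclusion is the main obstacle. An element $g\in\mathcal{G}$ with $\mathtt{LM}_{T_x}(g)=x_i^\alpha$ need not lie in $k[y,x_i]$, because its lower $T_x$-terms may involve other variables $x_j$. My first attempt would work over the localization $A_c[x]$, where $c=\mathtt{LC}_{T_x}(g)$, and use the Gianni--Kalkbrener specialization property of block-order Gröbner bases. The aim is to show that $x_i$ is integral over $A_c$ modulo $I$: for instance, by proving that $x_i$ acts on a finitely generated $A_c$-submodule of $B_c$ spanned by standard monomials, and then applying Cayley--Hamilton. This would produce $c^N\in J_i$. That only gives $\mathcal{G}_i\subseteq\sqrt{J_i}$, so I would need to upgrade the argument to exact membership. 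The natural route is to track the minimal such relation, or to induct on the well-order $T_x$ below $x_i^\alpha$.

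If the exact equality cannot be secured this way, the radical version $V(\mathcal{G}_i)=W_i$ already suffices for the geometric statement in part (2).
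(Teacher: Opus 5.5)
\textbf{Parts (1) and (2).} These are correct and follow the paper's route. The paper also proves (2) by testing basic opens $D(h)$ against the criterion that $B_h$ is finite over $A_h$ iff every $x_i$ is integral over $A_h$; your two directions are its chain of equivalences.

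\textbf{Part (3), the half you prove.} Your inclusion $J_i\subseteq\ideal{\mathcal{G}_i}$, together with the leading-monomial statement, is exactly what the paper proves. The paper gets it from a standard representation $f=\sum c_g g$ by comparing $T_x$-leading terms; you get it more cleanly from the divisibility property of $\mathcal{G}$. The paper then simply concludes ``so $\mathcal{G}_i$ is a Gr\"obner basis'' and never addresses $\mathcal{G}_i\subseteq J_i$. You are right that this is the crux.

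\textbf{The missing half is false.} The gap cannot be closed by your route or any other, because the inclusion fails. So does your radical fallback $V(\mathcal{G}_i)=W_i$. Take $m=1$, $n=2$, $I=\ideal{yx_1-x_2}\subseteq k[y,x_1,x_2]$, and $T_x$ with $x_1>x_2$.
\begin{itemize}
\item The single generator $g=yx_1-x_2$ is a Gr\"obner basis, with $\mathtt{LM}_{T_x}(g)=x_1$ and $\mathtt{LC}_{T_x}(g)=y$. Hence $\mathcal{G}_1=\{y\}$ and $\mathcal{G}_2=\varnothing$.
\item But $B\cong k[y,x_1]$ via $x_2\mapsto yx_1$, so $I\cap k[y,x_1]=0$ and $J_1=0$.
\item Thus no power of $y$ lies in $J_1$, and $V(\mathcal{G}_1)=V(y)\neq V(J_1)=\spec A$.
\end{itemize}
Your proposed integrality step fails concretely here. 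In $B_y\cong k[y,y^{-1}][x_1]$ the element $x_1$ is not integral over $A_y$. The rule $x_1=x_2/y$ only pushes degree into $x_2$, for which $\mathcal{G}$ has no rule. So the standard monomials $x_2^j$ span no finitely generated $x_1$-stable submodule containing $1$, and you never get $c^N\in J_i$.

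\textbf{What is true.} Algorithm~\ref{algo-non-finite-locus} only needs two facts: $J_i\subseteq\ideal{\mathcal{G}_i}$ (so $V(\mathcal{G}_i)\subseteq W_i$), and $\bigcup_i V(\mathcal{G}_i)=W$. For the second, the argument runs as follows.
\begin{itemize}
\item Suppose $p\notin V(\mathcal{G}_i)$ for every $i$. Choose $g_i\in\mathcal{G}$ with $\mathtt{LM}_{T_x}(g_i)=x_i^{\alpha_i}$ and $c_i=\mathtt{LC}_{T_x}(g_i)\notin p$. Necessarily $\alpha_i>0$, since elements of $\mathcal{G}_y$ lie in $J\subseteq p$.
\item Put $h=\prod_i c_i$. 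Over $A_h$ the relations $g_i=0$ rewrite every $x$-monomial divisible by some $x_i^{\alpha_i}$ into $T_x$-smaller ones.
\item Hence $B_h$ is spanned by the monomials with all $a_i<\alpha_i$, as in Lemma~\ref{lem:uniform-monomial-span}, and $p\notin W$ by (2).
\end{itemize}
The Gr\"obner-basis claim of (3) becomes correct only if $J_i$ is redefined as the set of $\mathtt{LC}_{T_x}(f)$ for $f\in I$ with $\mathtt{LM}_{T_x}(f)$ a power of $x_i$. With that definition, $\mathcal{G}_i\subseteq J_i$ is trivial and your divisibility argument finishes the proof. Part (2) still holds for the redefined ideals by the same rewriting argument.
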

	\begin{proof}~		
		
		\begin{enumerate}
			\item Trivial.
			\item Let $D(h)=\spec A_h \subseteq \spec A$ be a distinguished open defined by $h\ne 0$. Notice that 			
			$$\begin{array}{cl}
				& D(h) \cap W=\varnothing \\
				\Leftrightarrow& V(h) \supseteq W \\
				\Leftrightarrow& \exists s>0 \text{ s.t.\ } h^s\in \bigcap_{i=1}^n \mathtt{LC}_{x_i}(I\cap k[y_1,\ldots,y_m,x_i]) \\
				\Leftrightarrow& \exists s>0 \text{ s.t.\ } \bigwedge_{i=1}^n \exists f_i\in I\cap k[y_1,\ldots,y_m,x_i] \text{ s.t.\ }\mathtt{LC}_{x_i}(f_i)=h^s \\
				\Leftrightarrow& \bigwedge_{i=1}^n \exists f_i\in I_h\cap k[y_1,\ldots,y_m,x_i]_h \text{ s.t.\ }\mathtt{LC}_{x_i}(f_i)=1\\
				\Leftrightarrow& x_1,\ldots,x_n \text{ are integral over } A_h
			\end{array}$$
			
			Recall that $B_h$ is finite if and only if $x_1,\ldots,x_n$ are integral over $A_h$ \cite[Corollary 4.5]{eisenbud2013commutative}. Therefore, $B_h$ is finite over $A_h$ if and only if $D(h)\cap W=\varnothing$. So $W\subseteq \spec A$ is the smallest closed subset such that $$\spec B\times_{\spec A} \left(\spec A\backslash W\right) \to \left(\spec A\backslash W\right)$$ is finite.
			\item 
			Let $f\in I\cap k[y_1,\ldots,y_m,x_i]$ and set $x_i^\beta:=\mathtt{LM}_{T_x}(f)$. Because $\mathcal{G}$ is a block-order Gr\"{o}bner of $I$ over $k$, we can rewrite 
			$f=\sum\limits_{g\in \mathcal{G}} c_g g$ such that $\mathtt{LM}_T(f)=\max\limits_{g\in\mathcal{G}}\mathtt{LM}_{T}(c_g)\mathtt{LM}_{T}(g)$ by polynomial division algorithm \cite[Theorem 2.3.3]{cox2013ideals}. Since $T$ is the block order $T_x>T_y$, we have $x_i^\beta=\max\limits_{g\in\mathcal{G}}\mathtt{LM}_{T_x}(c_g)\mathtt{LM}_{T_x}(g)$.
			
			Extracting the leading terms in $T_x$, we see that	$$\begin{array}{lcl}
				\mathtt{LC}_{T_x}(f) x_i^\beta &=& \mathtt{LT}_{T_x}(f)\\ &=& \sum\limits_{\substack{g\in \mathcal{G},\\ x_i^\beta=\mathtt{LM}_{T_x}(c_g)\mathtt{LM}_{T_x}(g)}} \mathtt{LT}_{T_x}(c_g) \mathtt{LT}_{T_x}(g) \\
				&=& \sum\limits_{\substack{g\in \mathcal{G},\\ x_i^\beta=\mathtt{LM}_{T_x}(c_g)\mathtt{LM}_{T_x}(g)}} \mathtt{LC}_{T_x}(c_g) \mathtt{LC}_{T_x}(g) \mathtt{LM}_{T_x}(c_g) \mathtt{LM}_{T_x}(g) \\
				&=&\sum\limits_{\substack{g\in \mathcal{G},\\ x_i^\beta=\mathtt{LM}_{T_x}(c_g)\mathtt{LM}_{T_x}(g)}} \mathtt{LC}_{T_x}(c_g) \mathtt{LC}_{T_x}(g) x_i^\beta. \\
			\end{array}$$
			
			Canceling $x_i^\beta$ from both sides, we conclude that $\mathtt{LC}_{T_x}(f)$ is an element of $\ideal{\mathcal{G}_i}=\ideal{\mathtt{LC}_{T_x}(g)|g\in \mathcal{G}, \mathtt{LM}_{T_x}(g)=x_i^{\alpha} \text{ for some }\alpha\geq 0}$: $$\mathtt{LC}_{T_x}(f)=\sum\limits_{\substack{g\in \mathcal{G},\\ x_i^\beta=\mathtt{LM}_{T_x}(c_g)\mathtt{LM}_{T_x}(g)}} \mathtt{LC}_{T_x}(c_g) \mathtt{LC}_{T_x}(g).$$ 
			
			Further extracting the leading terms in $T_y$, we see that $\mathtt{LT}_{T_y}(\mathtt{LC}_{T_x}(f))$ lies in the ideal spanned by leading terms of $\mathcal{G}_i$. So $\mathcal{G}_i$ is a Gr\"{o}bner basis.
		\end{enumerate}
	\end{proof}
	
	By Theorem \ref{thm:non-finite-locus}, Algorithm \ref{algo-non-finite-locus} can be used to compute the non-finite locus of a morphism.
	
	\begin{algorithm}[htbp]
		\KwIn{A block-order Gr\"obner basis $\mathcal{G}$ for $I\unlhd k[y_1,\ldots,y_m,x_1,\ldots,x_n]$.}
		\KwOut{A defining ideal for the smallest closed subset $W$ of $\spec A$ such that the restriction on $\spec A\backslash W$ is finite.}
		\caption{\texttt{NonFiniteLocus}}\label{algo-non-finite-locus}
		
		$J_1,\ldots,J_n:=J$\;
		\ForEach{$g\in \mathcal{G}_x$}{
			\If{$\mathtt{LM}_{T_x}(g)$ is of the form $x_i^\alpha$}{
				$J_i:=J_i+\ideal{\mathtt{LC}_{T_x}(g)}$\;
			}
		}
		
		\Return{$\prod_{i=1}^{n} J_i$}\;
	\end{algorithm}
	
	As a corollary, whether a morphism is finite or not can be effectively decided. The following statement slightly improves the finiteness check in \cite[Proposition 3.1.5]{greuel2008singular} by allowing a more general choice of monomial order $T$.
	
	\begin{corollary}[Finiteness Test, c.f.\ {\cite[Proposition 3.1.5]{greuel2008singular}}]\label{cor-finiteness-test}
		The finiteness of morphism $\spec B\to \spec A$ can be effectively determined. Especially, the following are equivalent:
		\begin{enumerate}
			\item $B$ is a finite $A$-module. \label{item-finite}
			\item There are monic polynomials $\varphi_i(\lambda)\in A[\lambda]$ such that $\varphi_i(x_i)=0$ in $B$. \label{item-integral}
			\item For each $x_i$, there is $g_i\in \mathcal{G}$ such that the leading monomial $\mathtt{LM}_T(g_i)$ is a power of $x_i$. \label{item-leading-term}
		\end{enumerate}
	\end{corollary}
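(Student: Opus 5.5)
The plan is to prove the cycle of implications \eqref{item-finite}$\Rightarrow$\eqref{item-integral}$\Rightarrow$\eqref{item-leading-term}$\Rightarrow$\eqref{item-finite}, with Theorem \ref{thm:non-finite-locus} carrying most of the weight. Effectiveness then follows directly: after computing the block-order Gr\"obner basis $\mathcal{G}$, one only has to scan its elements and check condition \eqref{item-leading-term}, or equivalently run Algorithm \ref{algo-non-finite-locus} and test whether the returned ideal is the unit ideal of $A$.

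\textbf{\eqref{item-finite}$\Rightarrow$\eqref{item-integral}.} This is the standard fact that elements of a finite algebra are integral, obtained via the Cayley--Hamilton/determinant trick \cite[Corollary 4.5]{eisenbud2013commutative}. Each $x_i$ therefore satisfies a monic $\varphi_i\in A[\lambda]$.

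\textbf{\eqref{item-integral}$\Rightarrow$\eqref{item-leading-term}.} Lift the coefficients of $\varphi_i$ to $k[y_1,\ldots,y_m]$. Then $f_i:=\varphi_i(x_i)$ lies in $I\cap k[y_1,\ldots,y_m,x_i]$ and has $\mathtt{LC}_{x_i}(f_i)=1$. Hence $1\in J_i$. By Theorem \ref{thm:non-finite-locus}(3), $\mathcal{G}_i$ is a Gr\"obner basis of $J_i$ with respect to $T_y$, so some $g\in\mathcal{G}$ with $\mathtt{LM}_{T_x}(g)=x_i^\alpha$ has $\mathtt{LC}_{T_x}(g)$ whose $T_y$-leading monomial divides $1$. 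That coefficient is therefore a nonzero constant $c\in k$.

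The step that needs care is translating between $\mathtt{LM}_{T_x}$ and $\mathtt{LM}_T$. Since $T$ is the block order $T_x>T_y$, $\mathtt{LM}_T(g)=\mathtt{LM}_{T_x}(g)\cdot\mathtt{LM}_{T_y}(\mathtt{LC}_{T_x}(g))=x_i^\alpha\cdot 1$. Thus $\mathtt{LM}_T(g_i)$ is a power of $x_i$. We must also check $\alpha\ge 1$: if $\alpha=0$, then $g$ would be a nonzero constant in $I$ and $B=0$, which is trivially finite, so in that degenerate case \eqref{item-leading-term} is read with $\alpha\geq 0$ as in Theorem \ref{thm:non-finite-locus}.

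\textbf{\eqref{item-leading-term}$\Rightarrow$\eqref{item-finite}.} If $\mathtt{LM}_T(g_i)=x_i^{\alpha}$, the block order forces $\mathtt{LM}_{T_x}(g_i)=x_i^\alpha$ and $\mathtt{LC}_{T_x}(g_i)$ to be a nonzero constant, because any $y$-factor would appear in $\mathtt{LM}_T$. Thus $1\in J_i$ for all $i$, so each $W_i=\varnothing$. By Theorem \ref{thm:non-finite-locus}(2), $W=\varnothing$, and the restriction over all of $\spec A$ is finite, i.e.\ $B$ is a finite $A$-module.

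The only genuinely delicate point is this bookkeeping of leading monomials under the block order. It is also where the generality over \cite[Proposition 3.1.5]{greuel2008singular} enters: only the block structure $T_x>T_y$ is used, and no particular order inside each block is required.
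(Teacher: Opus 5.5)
Your route is the paper's: everything is reduced to Theorem~\ref{thm:non-finite-locus}. The implications \eqref{item-finite}$\Rightarrow$\eqref{item-integral}$\Rightarrow$\eqref{item-leading-term} are fine.

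The gap is the sentence ``Thus $1\in J_i$'' in \eqref{item-leading-term}$\Rightarrow$\eqref{item-finite}. From $\mathtt{LM}_T(g_i)=x_i^{\alpha}$ you only learn that $\mathtt{LC}_{T_x}(g_i)$ is a nonzero constant. The lower terms of $g_i$ may involve the other variables $x_j$. So in general $g_i\notin k[\bs y,x_i]$, and $g_i$ does not witness $1\in J_i$. You are tacitly using the inclusion $\mathcal{G}_i\subseteq J_i$, which is the ``generated by'' part of Theorem~\ref{thm:non-finite-locus}(3), and that inclusion is false. Take $I=\ideal{x_1^2-x_2}$ with $T_x$ lexicographic, $x_1>x_2$. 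Then $\{x_1^2-x_2\}$ is a Gr\"obner basis and gives $1\in\mathcal{G}_1$. But $k[\bs y,x_1]\to k[\bs y,x_1,x_2]/I$ is an isomorphism, so $I\cap k[\bs y,x_1]=0$ and $J_1=0$. The proof of Theorem~\ref{thm:non-finite-locus}(3) only establishes $J_i\subseteq\ideal{\mathcal{G}_i}$ together with the $T_y$-leading-term divisibility. That is all your \eqref{item-integral}$\Rightarrow$\eqref{item-leading-term} step uses, so that step survives. The paper's own one-line proof (``equivalently $1\in\mathcal{G}_i$ for $i=1,\ldots,n$'') relies on the same overstated per-index equivalence. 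So this is a defect you inherited, not a departure from the paper.

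To repair it, use all the $g_i$ at once rather than one index at a time. Suppose $\mathtt{LM}_T(g_i)=x_i^{\alpha_i}$ for every $i$. Then every monomial $\bs x^{\bs a}\bs y^{\bs b}$ not divisible by any element of $\mathtt{LM}_T(\mathcal{G})$ satisfies $a_i<\alpha_i$ for all $i$. Reduction modulo $\mathcal{G}$ writes every class in $B$ as a $k$-linear combination of such standard monomials. Hence $B$ is generated as an $A$-module by the finitely many residues of $\bs x^{\bs a}$ with $0\le a_i<\alpha_i$. This is the rewriting argument in the proof of Lemma~\ref{lem:uniform-monomial-span}, with every leading coefficient now a unit. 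It proves \eqref{item-finite} directly, and $1\in J_i$ for all $i$ then follows afterwards from \eqref{item-finite}$\Rightarrow$\eqref{item-integral}.

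Also, \eqref{item-integral}$\Rightarrow$\eqref{item-leading-term} needs no appeal to Theorem~\ref{thm:non-finite-locus}. A lift of $\varphi_i(x_i)$ lies in $I$ and has $\mathtt{LM}_T=x_i^{\deg\varphi_i}$ under the block order. So some $\mathtt{LM}_T(g)$ with $g\in\mathcal{G}$ divides it, and is therefore a power of $x_i$.
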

	\begin{proof}~
		By Theorem \ref{thm:non-finite-locus}, $B$ is finite over $A$ if and only if the ideals $J_i=\ideal{1}$, or equivalently $1\in\mathcal{G}_i$ for $i=1,\ldots,n$.
	\end{proof}
	\begin{remark}
		We remark that the finiteness check has been studied under a different notion called ``properness defect'' \cite{jelonek1999testing,stasica2002effective,schost2004properness,lazard2007solving,moroz2011properness}. Of course, for affine morphisms, being proper is equivalent to being finite \cite[Exercise II.4.6]{hartshorne2013algebraic}.
	\end{remark}

	\begin{remark}	
		A common feature of previous results on detecting ``properness defect'' is the passage to the projective space \cite{stasica2002effective,lazard2007solving,moroz2011properness}. This is not always convenient, because of the necessary use of graded reverse lexicographic order (the input might be a Gr\"{o}bner basis with respect to another order), and extra variables are explicitly or implicitly introduced during the computation. Moreover, both \cite{stasica2002effective} and \cite[Algorithm 1]{moroz2011properness} require multiple eliminations. Our method needs only one block-order Gr\"{o}bner basis computation without introducing new variables. Moreover, our method does not make any assumption on the base field and the affine variety.
		
	\end{remark}	
	
	\subsection{Non-finite-flat Locus and Flatness Check}
	We recall that the flatness of a finitely presented module can be checked via the Fitting ideals.
	\begin{lemma}[Flatness Test, {\cite[Corollary 7.3.13]{greuel2008singular}}]\label{lem-flatness-test}
		Suppose $B$ is a finite $A$-module with a finite presentation
		$$A^s\mathop{\to}\limits^{N} A^r\to B\to 0,$$
		where $N\in A^{r\times s}$.
		Define the $i$-th Fitting ideal of $B$, $F_{i}(B)\unlhd A$ to be ideal generated by all $(r-i)$-minors of $N$ and set $F_{-1}(B)=\ideal{0}$. Define $$F(B)=\sum_{i=0}^{r}\left((0:F_{i-1}(B))\cap F_i(B)\right).$$
		Then, \begin{enumerate}
			\item $B$ is flat over $A$ if and only if $1\in F(B)$.
			\item Let $p\in \spec A$, then $B_p$ is flat over $A_p$ if and only if $p\notin V(F(B))$.
		\end{enumerate}			
	\end{lemma}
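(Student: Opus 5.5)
The plan is to reduce everything to a pointwise statement at primes, namely part (2), and then deduce part (1) from it. Two standard facts will do the work. First, since $A$ is noetherian and $B$ is a finite $A$-module, $B_p$ is flat over $A_p$ if and only if $B_p$ is free over the local ring $A_p$ (finite flat over a local noetherian ring means free, cf.\ \cite[\href{https://stacks.math.columbia.edu/tag/00NX}{Lemma 00NX}]{stacks-project}). Second, Fitting ideals commute with base change, so $F_i(B)_p=F_i(B_p)$, and for a finite module over a local ring one has: $B_p$ is free of rank $i$ if and only if $F_i(B_p)=A_p$ and $F_{i-1}(B_p)=0$. This is the same Fitting ideal criterion \cite[\href{https://stacks.math.columbia.edu/tag/07ZD}{Lemma 07ZD}]{stacks-project} already used in Step 3 of the proof of Theorem \ref{thm-reduced-g-etale-is-etale}. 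The convention $F_{-1}(B)=0$ makes $(0:F_{-1}(B))=A$, which covers the rank-$0$ case. Ranks larger than $r$ cannot occur, since $B_p$ is generated by $r$ elements.

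For part (2), I will show that $p\notin V(F(B))$ holds if and only if there is some $i$ with $B_p$ free of rank $i$. Since $F(B)$ is a finite sum, $p\notin V(F(B))$ means that some summand $(0:F_{i-1}(B))\cap F_i(B)$ is not contained in $p$.

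\emph{Forward direction.} Take $a\in (0:F_{i-1}(B))\cap F_i(B)$ with $a\notin p$. Then $a$ is a unit in $A_p$, so $F_i(B)_p=A_p$. Also $aF_{i-1}(B)=0$ with $a$ a unit, so $F_{i-1}(B)_p=0$. Hence $B_p$ is free of rank $i$.

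\emph{Reverse direction.} Suppose $B_p$ is free of rank $i$. Then $F_{i-1}(B)_p=0$. Since $F_{i-1}(B)$ is finitely generated, there is a single $t\notin p$ with $tF_{i-1}(B)=0$. Also $F_i(B)\not\subseteq p$, so pick $u\in F_i(B)\setminus p$. The product $tu$ lies in $(0:F_{i-1}(B))\cap F_i(B)$, because $F_i(B)$ is an ideal and $t$ kills $F_{i-1}(B)$. Since $p$ is prime and $t,u\notin p$, we get $tu\notin p$, so $p\notin V(F(B))$.

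Part (1) then follows formally. $B$ is flat over $A$ if and only if $B_p$ is flat over $A_p$ for every prime $p$. By part (2) this holds if and only if $V(F(B))=\varnothing$, which is equivalent to $1\in F(B)$.

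The only step needing care is the reverse direction of part (2): passing from the local vanishing $F_{i-1}(B)_p=0$ to a global element of the annihilator outside $p$. This is exactly where finite generation of $F_{i-1}(B)$ is used, so that a single denominator $t$ suffices. The rest is bookkeeping with Fitting ideals under localization.
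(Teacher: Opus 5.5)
Your proof is correct and follows the same route the paper sketches right after the lemma, which it otherwise takes from Greuel--Pfister: localize, apply the Fitting-ideal criterion for $B_p$ to be free of rank $i$, and identify the support of $F_{i-1}(B)$ with $V(0:F_{i-1}(B))$; you make that last step explicit with the single denominator $t$ coming from finite generation of $F_{i-1}(B)$. One cosmetic remark: the lemma does not assume $A$ noetherian, and you never need it, since finite presentation of $B$ already gives that $B_p$ is flat over $A_p$ if and only if it is free.
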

	
	The idea of flatness test is that flatness can be checked locally. Namely, $B_p$ is a free (equivalently, flat) $A_p$-module of rank $i$, if and only if $F_i(B_p)=A_p$ and $F_{i-1}(B_p)=0$, i.e.\ $p$ lies outside the zero locus of $F_i(B)$ and the support of $F_{i-1}(B)$, which is the zero locus of the annihilator $\mathop{\mathrm{Ann}}(F_{i-1}(B))=(0:F_{i-1}(B))$.
	
	To check the flatness of an $A$-algebra $B$ via the above criterion, we need a finite presentation of it as an $A$-module. However, we have already seen that $B$ may fail to be finite over $A$ in a closed subset in the previous section. An obvious solution is to cover the finite locus by affine opens and apply the criterion to each affine open. But then one has to do the test multiple times, which can be computationally very expensive. Here we propose a direct approach without localization.

Recall that the non-finite locus is characterized by the leading coefficients in $x_i$. For each $i$, let
\[
\mathcal P_i=
\left\{g\in\mathcal G_x\middle|
\mathtt{LM}_{T_x}(g)=x_i^{d(g)}\text{ for some }d(g)>0\right\}.
\]
If some $\mathcal P_i$ is empty, the finite locus is empty. Otherwise, set
\[
t_i=\max\{d(g)\mid g\in\mathcal P_i\}
\]
and consider the monomials
\begin{equation}\label{eqn:uniform-monomials}\tag{$\star$}
	\mathcal M=
	\left\{
	\prod_{i=1}^n x_i^{a_i}
	\middle|
	0\leq a_i\leq t_i-1\text{ for }i=1,\ldots,n
	\right\}.
\end{equation}
Let $B'\subseteq B$ be the $A$-submodule generated by the residue classes of the monomials in $\mathcal M$. The module $B'$ is finite over $A$, although it need not be equal to $B$ over the non-finite locus. The flatness test (Lemma \ref{lem-flatness-test}) applies to $B'$ once we have a finite presentation. And we will show that the flatness of $B'$ is equivalent to the flatness of $B$ outside the non-finite locus.

\begin{lemma}\label{lem:uniform-monomial-span}
	Let $W\subseteq\spec A$ be the non-finite locus in
	Theorem~\ref{thm:non-finite-locus}. Then
	\[
	B'_p=B_p
	\qquad\text{for every }p\in\spec A\setminus W.
	\]
\end{lemma}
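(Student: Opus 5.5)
It suffices to show, for every $p\in\spec A\setminus W$, that each monomial $x^\gamma=\prod_i x_i^{\gamma_i}$ lies in $B'_p$. The monomials generate $B$ as an $A$-algebra, hence as an $A$-module, and $B'\subseteq B$ gives the reverse inclusion. The plan is a division argument that uses the elements of $\mathcal P_i$ whose leading coefficients are units at $p$.

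\emph{Step 1: one usable relation per variable at $p$.} Fix $p\notin W=\bigcup_i V(J_i)$. By Theorem~\ref{thm:non-finite-locus}(3), $J_i$ is generated by $\mathcal G_i=\{\mathtt{LC}_{T_x}(g)\mid g\in\mathcal G,\ \mathtt{LM}_{T_x}(g)=x_i^\alpha\}$, and $J\subseteq J_i$ already accounts for the $\alpha=0$ members. Since $J_i\not\subseteq p$ and $J\subseteq p$ (as $p\in\spec A$), some $g_i\in\mathcal P_i$ has $c_i:=\mathtt{LC}_{T_x}(g_i)\notin p$. So $c_i$ is a unit in $A_p$, and $\deg_{x_i}$ of the leading monomial is $d_i:=d(g_i)\le t_i$.

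\emph{Step 2: reduce exponents.} In $B_p$ we can rewrite
\[
x_i^{d_i}=-c_i^{-1}\bigl(g_i-c_ix_i^{d_i}\bigr),
\]
where every $x$-monomial of $g_i-c_ix_i^{d_i}$ is $T_x$-smaller than $x_i^{d_i}$. Define a reduction on monomials: if $x^\gamma$ has some $\gamma_i\ge t_i\ (\ge d_i)$, replace the factor $x_i^{d_i}$ by this expression. Each term produced is a monomial $T_x$-smaller than $x^\gamma$, since multiplying by a fixed monomial preserves a monomial order. Because $T_x$ is a well-order, this rewriting terminates. It stops only at monomials with every $\gamma_i<t_i$, that is, elements of $\mathcal M$. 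Hence every $x^\gamma$ lies in the $A_p$-span of $\mathcal M$, which equals $B'_p$; localization commutes with taking submodules generated by a finite set.

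\emph{Main obstacle.} The delicate point is that the monomials in $g_i$ other than its leading one are not powers of $x_i$. They may involve other variables with large exponents, so the reduction must be organized by the global order $T_x$ rather than by degrees in $x_i$ alone. The well-ordering argument handles this. A second point to check is that the $c_g$ landing in $J_i$ from $\mathcal G_y$ (the $\alpha=0$ case) cannot be the generator witnessing $p\notin V(J_i)$. This holds because $\mathcal G_y\subseteq J\subseteq p$, so the witness must come from $\mathcal P_i$, and in particular $\mathcal P_i\neq\varnothing$, matching the convention in the definition of $t_i$.
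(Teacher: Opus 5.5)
Your proof is correct and follows essentially the same route as the paper: for each $i$ you choose $g_i\in\mathcal P_i$ whose $T_x$-leading coefficient is a unit in $A_p$, use $g_i=0$ to rewrite $x_i^{d(g_i)}$ in terms of $T_x$-smaller monomials, and conclude by well-foundedness of $T_x$ (the paper phrases this last step as taking a $T_x$-minimal monomial outside $B'_p$, rather than as termination of rewriting). Your Step~1 only uses the inclusion $J_i\subseteq\langle\mathcal G_i\rangle$ from Theorem~\ref{thm:non-finite-locus}(3), which is the direction that theorem's proof actually establishes, so your argument does not depend on the full equality stated there.
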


\begin{proof}
	We show that every $x$-monomial can be reduced to an $A_p$-linear combination of monomials in $\mathcal M$. Let $p\in\spec A\setminus W$. For every $1\leq i \leq n$, there is some $g_i\in\mathcal P_i$ such that $\mathtt{LC}_{T_x}(g_i)+J \notin p$.

	Thus $\mathtt{LC}_{T_x}(g_i)$ is a unit in $A_p$. Hence $g_i=0$ in $B_p$ gives a rewriting rule for $x_i^{d(g_i)}$. Suppose $B'_p\neq B_p$, then there is a smallest monomial $\mathfrak{m}=\prod_i x_i^{\alpha_i}$ with respect to $T_x$ that does not belong to $B'_p$. Clearly by this assumption, $\mathfrak{m} \notin \mathcal{M}$. Without loss of generality, we may assume that $\alpha_1\geq t_1\geq \deg_{x_1} g_1$. Then we may rewrite $\mathfrak{m}$ as an $A_p$-linear combination of smaller monomials using $g_1=0$. But those monomials belong to $B'_p$ by the minimality of $\mathfrak{m}$. Hence $\mathfrak{m}\in B'_p$. This is a contradiction and we are done.
\end{proof}

It remains to compute a presentation of $B'$. Fix an ordering
$\mathcal M=\{\mathfrak{m}_1,\ldots,\mathfrak{m}_r\}$, introduce variables
$t,u_1,\ldots,u_r$, and form the graph ideal
\[
\mathcal L
=I+\ideal{u_j-t\mathfrak{m}_j\mid j=1,\ldots,r}
\unlhd k[y_1,\ldots,y_m,x_1,\ldots,x_n,t,u_1,\ldots,u_r].
\]
Notice that $\mathcal L$ is homogeneous in $u_1,\ldots,u_r, t$. Compute a Gr\"obner basis
${\mathcal H}$ of $\mathcal L$ with respect to a block order
\[
[\bs x,t]>[\bs u]>[\bs y]
\]
such that the monomial order in $\bs u$ is degree-compatible, and let
\[
\mathcal H_1=
\left\{
h\in{\mathcal H}\cap k[y_1,\ldots,y_m,u_1,\ldots,u_r]
\middle|\deg_{\bs u}(h)=1
\right\}.
\]

\begin{lemma}[Relations among the monomials]
	\label{lem:relations-among-monomials}
	Write $\mathcal H_1=\{h_1,\ldots,h_s\}$ and
	\[
	h_j=\sum_{i=1}^r n_{ij}(\bs y)u_i.
	\]
	The matrix
	$N=(n_{ij})\in A^{r\times s}$
	gives a finite presentation
	\[
	A^s\mathop{\longrightarrow}\limits^N A^r
	\longrightarrow B'\longrightarrow0.
	\]
\end{lemma}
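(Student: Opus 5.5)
The plan is to show that the $A$-module of $A$-linear relations among the classes of $\mathfrak m_1,\ldots,\mathfrak m_r$ in $B$ is exactly the set of linear forms $\sum_i a_i u_i$ with $a_i\in A$ lying in $(\mathcal L\cap k[\bs y,\bs u])+J$, and that the degree-one part of the Gr\"obner basis $\mathcal H$ generates this module. Surjectivity of $A^r\to B'$, $e_i\mapsto \mathfrak m_i$, holds by definition of $B'$. So the content is that the kernel is generated by the columns of $N$.

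\textbf{Step 1: elimination.} Consider the $k[\bs y]$-algebra map $\phi: k[\bs y,\bs u]\to B[t]$, $u_j\mapsto t\mathfrak m_j$. Then $\ker\phi=\mathcal L\cap k[\bs y,\bs u]$. This is the standard graph/elimination statement: $k[\bs y,\bs x,t,\bs u]/\mathcal L\cong B[t]$ via substituting $u_j=t\mathfrak m_j$. Because $\mathcal L$ is homogeneous in $(t,\bs u)$ and $\phi$ preserves this grading ($\deg t=\deg u_j=1$), $\ker\phi$ is homogeneous in $\bs u$. A linear form $\sum_i a_i(\bs y)u_i$ lies in $\ker\phi$ iff $t\sum_i a_i\mathfrak m_i=0$ in $B[t]$, iff $\sum_i a_i\mathfrak m_i=0$ in $B$. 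Hence the relation module of $B'$ over $k[\bs y]$ is exactly $(\ker\phi)_1$, the $\bs u$-degree-one part. Elements of $J$ times $u_i$ lie there, so passing to $A=k[\bs y]/J$ causes no problem: the kernel of $A^r\to B'$ is the image of $(\ker\phi)_1$.

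\textbf{Step 2: the Gr\"obner basis restricts correctly.} Because the block order has $[\bs x,t]$ largest, $\mathcal H\cap k[\bs y,\bs u]$ is a Gr\"obner basis of $\ker\phi$ with respect to the induced order $[\bs u]>[\bs y]$ (elimination theorem). We want its elements of $\bs u$-degree $1$ to generate $(\ker\phi)_1$ as a $k[\bs y]$-module.

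\textbf{Step 3: degree-one generation (main obstacle).} Take $f\in(\ker\phi)_1$ and reduce it by $\mathcal H\cap k[\bs y,\bs u]$ to remainder $0$. Since the order on $\bs u$ is degree-compatible and $\bs u$ dominates $\bs y$, $\mathtt{LM}(f)$ has $\bs u$-degree $1$. Any divisor $h$ used in the division has $\mathtt{LM}(h)\mid\mathtt{LM}(f)$, so its leading monomial has $\bs u$-degree $\le 1$.

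The delicate point is ensuring each such $h$ has total $\bs u$-degree exactly $1$, i.e.\ is homogeneous and linear. Two facts handle this. First, since $\ker\phi$ is $\bs u$-homogeneous, I may assume (as Buchberger on homogeneous input produces) that the elements of $\mathcal H$ are $(t,\bs u)$-homogeneous. Second, degree-compatibility of the $\bs u$ order means a homogeneous $h$ has leading $\bs u$-degree equal to its degree. A $\bs u$-degree $0$ element would lie in $\mathcal L\cap k[\bs y]=J$, and those are harmless modulo $J$.

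Each division step subtracts $c\,h$ with $c$ a $\bs y$-term and $h\in\mathcal H_1$ or $h\in J$, keeping the remainder in degree $1$. Hence $f\in \sum_j k[\bs y]h_j+J\cdot(u_1,\ldots,u_r)$, and modulo $J$ the columns of $N$ generate the kernel. If $\mathcal H$ is not a priori homogeneous, I would first replace it by its homogeneous components, which still form a Gr\"obner basis of the homogeneous ideal under a degree-compatible $\bs u$ order; I expect this to be the only real subtlety to verify.
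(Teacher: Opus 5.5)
Your proposal is correct and follows the paper's route. You identify the $A$-linear relations among $\mathfrak m_1,\ldots,\mathfrak m_r$ with the $\bs u$-degree-one part of $\mathcal L\cap k[\bs y,\bs u]$ via the graph ideal and elimination, then use the $(t,\bs u)$-homogeneity of $\mathcal L$ and the degree-compatible $\bs u$-order to isolate the degree-one Gr\"obner elements. You are also more careful than the paper in tracking the harmless contribution $J\cdot(u_1,\ldots,u_r)$.

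One point needs tidying: your fallback for a non-homogeneous $\mathcal H$. Adding \emph{all} homogeneous components can introduce degree-one generators, namely the lower components of elements of $\bs u$-degree $\geq 2$, which are not columns of $N$. Instead, keep only the top-degree components. Alternatively, note that a standard representation of a linear form with respect to the original $\mathcal H$ only involves elements of $\bs u$-degree $\le 1$, and their degree-zero parts lie in $\mathcal L\cap k[\bs y]=J$.
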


\begin{proof}
		
	Consider
	$$\begin{array}{rcl}
	\theta:A[u_1,\ldots,u_r]&\to& B[t]\\
	u_i &\mapsto & \mathfrak{m}_i t.
	\end{array}$$	
	A $\bs{u}$-linear form $\sum_i a_i(\bs y)u_i$ belongs to $\ker \theta$ if and only if	$\sum_i a_i(\bs y)\mathfrak{m}_i=0\text{ in }B.$
	
	Since $k[\bs{y},\bs{x},\bs{u},t]/\mathcal L\simeq B[t]$, by the elimination theorem we have the preimage of $\ker\theta$ in $k[y_1,\ldots,y_m,u_1,\ldots,u_r]$ is exactly $\mathcal L\cap k[y_1,\ldots,y_m,u_1,\ldots,u_r]$.
	
	Thus the degree-one part of the	elimination ideal consists exactly of the $A$-linear relations among the monomials in $\mathcal M$. As $\mathcal{H}$ is a Gr\"{o}bner basis with respect to a monomial order homogeneous in $\bs{u}$, $\mathcal{H}_1$ spans the degree-one part.
\end{proof}

Let $N$ be the matrix in Lemma~\ref{lem:relations-among-monomials}. Define
ideals by
$$F_i =J+\ideal{\mathtt{minors}(N,r-i)},\ i=-1,0,\ldots,r; $$
and
\[
F
=\sum_{i=0}^r\bigl((J: F_{i-1})\cap F_i\bigr).
\]
The image of $F$ in $A$ is the flatness ideal $F(B')$. As usual, if the requested minors do not exist, they contribute no generators; the unique $0\times0$ minor is $1$.

\begin{theorem}[Non-finite-flat Locus]
	\label{thm:non-finite-flat-locus}
	The largest open of $\spec A$ where the map is finite flat is the complement of 
	$$W \cup V(F),$$
	where $W$ is the non-finite locus in Theorem \ref{thm:non-finite-locus} and $F$ is the Fitting ideal of $B'$.
\end{theorem}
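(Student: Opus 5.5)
The plan is to show that a point $p\in\spec A$ lies in the finite-flat locus exactly when $p\notin W$ and $p\notin V(F)$. Both loci are open, and the set of points where the map is finite and flat is automatically the largest open with that property. The reason is that finiteness and flatness are local on the base: a point lies in some open over which the map is finite flat if and only if it lies outside $W$ and $B_p$ is flat over $A_p$.

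\textbf{Step 1: reduce to pointwise statements.} By Theorem \ref{thm:non-finite-locus}, $\spec A\setminus W$ is the largest open over which $B$ is finite. Hence any open $U$ on which the map is finite flat satisfies $U\cap W=\varnothing$. Conversely, on $\spec A\setminus W$ the map is finite, and flatness of a finite module can be checked stalkwise. So the finite flat locus is $\{p\notin W : B_p \text{ flat over } A_p\}$, and it remains to identify this set with $(\spec A\setminus W)\setminus V(F)$.

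\textbf{Step 2: compare $B$ with $B'$.} For $p\notin W$, Lemma \ref{lem:uniform-monomial-span} gives $B'_p=B_p$, so $B_p$ is flat over $A_p$ if and only if $B'_p$ is. Since $B'$ is a finite $A$-module, Lemma \ref{lem:relations-among-monomials} gives the finite presentation $A^s\xrightarrow{N}A^r\to B'\to0$. Lemma \ref{lem-flatness-test}(2) then says $B'_p$ is flat if and only if $p\notin V(F(B'))$.

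\textbf{Step 3: lift the ideal to the polynomial ring.} It remains to check that $V(F(B'))\subseteq\spec A$ corresponds to $V(F)\subseteq\spec k[\bs y]$ restricted to $V(J)=\spec A$. This is the step I expect to require care. For $F_i$ there is no issue: the image of $J+\ideal{\mathtt{minors}(N,r-i)}$ in $A$ is exactly $F_i(B')$. For the colon ideals, I will verify that $(J:F_{i-1})/J=(0:_A F_{i-1}(B'))$. An element $a$ satisfies $aF_{i-1}\subseteq J$ if and only if its image kills $F_{i-1}(B')$ in $A=k[\bs y]/J$, so this identity holds since $J\subseteq F_{i-1}$. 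Intersections and sums of ideals containing $J$ commute with passage to the quotient, because the correspondence theorem is a lattice isomorphism. Therefore $F/J=F(B')$, and $V(F)=V(F(B'))$ inside $\spec A$, which completes the proof.
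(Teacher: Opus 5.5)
Your proposal is correct and follows the same route as the paper: restrict to the finite locus $\spec A\setminus W$, use Lemma \ref{lem:uniform-monomial-span} to replace $B_p$ by $B'_p$, and apply the Fitting-ideal flatness test to the presentation from Lemma \ref{lem:relations-among-monomials}. Your Step 1 spells out why the resulting set is the largest such open, and your Step 3 (the image of $F$ in $A$ equals $F(B')$, via the correspondence theorem) makes explicit an identification that the paper only asserts before the theorem.
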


\begin{proof}
	We first compare $B$ with the finite module $B'$ on the finite locus, and
	then apply the Fitting-ideal criterion. By
	Lemma~\ref{lem:uniform-monomial-span}, $B'_p=B_p$ for every
	$p\notin W$. Hence the restriction of $B$ at such a point is flat if and
	only if $p\notin V(F)$ by Lemma \ref{lem-flatness-test} and Lemma \ref{lem:relations-among-monomials}. Therefore the non-finite-flat locus is
	$W\cup V(F)$
	as claimed.
\end{proof}

Algorithm \ref{algo:non-finite-flat-locus} computes the non-finite-flat locus.
\begin{algorithm}[htbp]
	\KwIn{A block-order Gr\"obner basis $\mathcal G$ for
		$I\unlhd k[y_1,\ldots,y_m,x_1,\ldots,x_n]$.}
	\KwOut{An ideal defining the non-finite-flat locus in $\spec A$.}
	\caption{\texttt{NonFiniteFlatLocus}}
	\label{algo:non-finite-flat-locus}
	
	\For{$i:=1$ \KwTo $n$}{
		$\mathcal P_i:=\{g\in\mathcal G_x\mid
		\mathtt{LM}_{T_x}(g)=x_i^{d(g)},\ d(g)>0\}$\;
		\If{$\mathcal P_i=\varnothing$}{\Return{$J$}\;}
		$J_i:=J+\ideal{\mathtt{LC}_{T_x}(g)\mid g\in\mathcal P_i}$\;
		$t_i:=\max\{d(g)\mid g\in\mathcal P_i\}$\;
	}
	$\mathcal{M}:=\left\{\prod_{i=1}^n x_i^{a_i}\middle|0\leq a_i\leq t_i-1\text{ for }i=1,\ldots,n \right\}=\{\mathfrak{m}_1,\ldots,\mathfrak{m}_r\}$\;
	Introduce fresh variables $t,u_1,\ldots,u_r$\;
	$\mathcal L:=I+\ideal{u_j-t\mathfrak{m}_j\mid j=1,\ldots,r}$\;
	Compute a Gr\"obner basis ${\mathcal H}$ of
	$\mathcal L$ for $[\bs x,t]>[\bs u]>[\bs y]$, $\bs{u}$-degree-compatible\;
	$\mathcal H_1:=\{h\in{\mathcal H}\cap k[\bs{y},\bs u]\mid
	\deg_{\bs u}(h)=1\}=\{h_1,\ldots,h_s\}$\;
	Let $N$ be the $r\times s$ matrix whose $(i,j)$-entries are $u_i$-coefficient of $h_j$\;
	$F_{-1}:=J$\;
	\For{$i:=0$ \KwTo $r$}{
		$F_i:=J+\ideal{\mathtt{minors}(N,r-i)}$\;
	}
	$F:=\sum_{i=0}^r\bigl((J: F_{i-1})\cap F_i\bigr)$\;
	\Return{$J_1 \cdot J_2\cdot \cdots \cdot J_n \cdot F$}\;
\end{algorithm}

\begin{corollary}[Finite Flatness Test]
	\label{cor:finite-flat-test}
	Whether the morphism $\spec B\to \spec A$ is finite flat can be effectively determined by Algorithm \ref{algo:non-finite-flat-locus}.
\end{corollary}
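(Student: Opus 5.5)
The corollary is an immediate consequence of Theorem \ref{thm:non-finite-flat-locus} together with the termination and correctness of each step of Algorithm \ref{algo:non-finite-flat-locus}. The plan is to show two things: the algorithm returns an ideal $Q$ whose vanishing locus in $\spec A$ is exactly the non-finite-flat locus $W\cup V(F)$, and the morphism $\spec B\to\spec A$ is finite flat if and only if this locus is empty, i.e.\ if and only if $1\in Q$. The last condition is decidable by one more Gr\"obner basis computation.

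\textbf{Step 1: every step is effective.} Each step of Algorithm \ref{algo:non-finite-flat-locus} is a finite computation over the effective field $k$:
\begin{enumerate}
\item reading leading monomials and leading coefficients off the given $\mathcal G$;
\item forming the finite set $\mathcal M$;
\item computing one Gr\"obner basis $\mathcal H$ of $\mathcal L$ for the stated block order;
\item extracting the $\bs u$-linear elements $\mathcal H_1$;
\item computing minors of the resulting matrix $N$, ideal quotients $(J:F_{i-1})$, intersections, sums and products of ideals in $k[\bs y]$.
\end{enumerate}
Quotients and intersections are themselves standard Gr\"obner basis computations, so the algorithm terminates.

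\textbf{Step 2: the returned ideal cuts out $W\cup V(F)$.} There are two cases.
\begin{enumerate}
\item If some $\mathcal P_i$ is empty, then $J_i=J$ in Theorem \ref{thm:non-finite-locus}. Hence $W=\spec A$, the finite locus is empty, and returning $J$ is correct.
\item Otherwise, Theorem \ref{thm:non-finite-locus}(3) gives $V(J_i)=W_i$ for each $i$. Lemma \ref{lem:relations-among-monomials} shows that $N$ presents $B'$, so the image of $F$ in $A$ is the flatness ideal $F(B')$ of Lemma \ref{lem-flatness-test}. Since
\[
V(J_1\cdots J_n\cdot F)=\bigcup_i V(J_i)\cup V(F)=W\cup V(F),
\]
Theorem \ref{thm:non-finite-flat-locus} identifies this set as the complement of the largest open subset over which the morphism is finite flat.
\end{enumerate}

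\textbf{Step 3: the test.} Finiteness and flatness are both local on the base. Hence the morphism is finite flat if and only if the largest such open subset is all of $\spec A$, i.e.\ if and only if $V(Q)=\varnothing$ in $\spec A$ for the returned ideal $Q\supseteq J$. This holds if and only if $1\in Q$, which is decided by computing a reduced Gr\"obner basis of $Q$ and checking whether it equals $\{1\}$.

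The main subtlety is the intersection with the annihilator condition. The ideal $F$ is formed in $k[\bs y]$ with $J$ added throughout, and $(J:F_{i-1})$ must represent the annihilator in $A=k[\bs y]/J$. This holds because the annihilator of $F_{i-1}/J$ in $A$ is exactly $(J:F_{i-1})/J$, and ideal quotients commute with passing to the quotient by $J$.
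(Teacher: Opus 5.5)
Your proposal is correct and follows the paper's implicit argument: the corollary is read off from Theorem~\ref{thm:non-finite-flat-locus} by checking whether the ideal returned by Algorithm~\ref{algo:non-finite-flat-locus} is the unit ideal. One harmless slip is that the returned product $Q=J_1\cdots J_n\cdot F$ need not contain $J$ (only $J^{n+1}\subseteq Q$); this still gives $V(Q)\subseteq V(J)=\spec A$, so $1\in Q$ if and only if the non-finite-flat locus is empty, exactly as you conclude.
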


	\begin{remark}\label{rmk:miracle-flatness}
		The flatness check can be skipped if the conditions for miracle flatness hold. For example, if the morphism $f$ is a finite endomorphism $f:X\to X$ with $X$ integral regular, then $f$ is automatically flat. This is often the case in arithmetic dynamics.
	\end{remark}
	
	\subsection{Non-\'etale Locus and \'Etaleness Check}
	Finally, we now compute the largest open subset of $\spec A$ over which the morphism
	$\spec B\to\spec A$ is finite \'etale. In our setting, a morphism is finite	\'etale if and only if it is finite flat and unramified. Let $W\cup V(F)\subseteq\spec A$ be the non-finite-flat locus given by Theorem \ref{thm:non-finite-flat-locus},	and put $U=\spec A\setminus (W\cup V(F))$. It remains to determine where the restriction over $U$ fails to be unramified.
	
	The ramification locus in $\spec B$ is the support of the module of relative K\"ahler differentials $\Omega_{B/A}$. Equivalently, it is the zero locus of the zeroth Fitting ideal of $\Omega_{B/A}$, which can be computed from the relative Jacobian matrix. Since the restriction
	\[
	\spec B\times_{\spec A}U\to U
	\]
	is finite, the image of this closed locus is closed in $U$.
	
	\begin{theorem}[Non-finite-\'etale Locus]\label{thm:non-finite-etale-locus}
		Suppose $\{g_1,\ldots,g_t\}$ is a set of generators of $I$.
		Define $$\mathcal{J}=\begin{pmatrix}
			\frac{\partial g_1}{\partial x_1}& \ldots & \ldots & \frac{\partial g_t}{\partial x_1} \\
			\vdots & &  &\vdots \\
			\frac{\partial g_1}{\partial x_n}& \ldots & \ldots & \frac{\partial g_t}{\partial x_n} \\
		\end{pmatrix}$$
		and $\mathop{\mathrm{Jac}}(B)$ is the ideal generated by all $n$-minors of $\mathcal{J}$ plus $I$.
		
		The largest open of $\spec A$ where the map is finite \'etale is the complement of 
		$$W\cup V(F)\cup V(\mathrm{Jac}(B)\cap k[y_1,\ldots,y_m]),$$
		where $W$ is the non-finite locus in Theorem \ref{thm:non-finite-locus}, $F$ is the Fitting ideal of $B'$ in Theorem \ref{thm:non-finite-flat-locus}.
	\end{theorem}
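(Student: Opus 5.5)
The plan is to reduce to the open set $U=\spec A\setminus (W\cup V(F))$ given by Theorem~\ref{thm:non-finite-flat-locus}. Over $U$ the map is finite flat, and over its complement it is not finite flat, hence not finite \'etale. So the largest open set over which the map is finite \'etale is contained in $U$. It then suffices to show that, for $p\in U$, the map is unramified over a neighbourhood of $p$ if and only if $p\notin V(\mathrm{Jac}(B)\cap k[y_1,\ldots,y_m])$. Here $V(\cdot)$ is read in $\spec A$, with the ideal understood modulo $J$.

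First, I will identify the ramification locus in $\spec B$. Since $B=A[x_1,\ldots,x_n]/I$ and $I=\langle g_1,\ldots,g_t\rangle$, the conormal sequence gives a presentation
\[
B^t\xrightarrow{\ \mathcal J\ } B^n\to \Omega_{B/A}\to 0 .
\]
Indeed $\Omega_{A[\bs x]/A}$ is free on $dx_1,\ldots,dx_n$, and the $dg_j$ only involve $\partial g_j/\partial x_i$, because the $y$-derivatives die relative to $A$. Hence $F_0(\Omega_{B/A})$ is the ideal generated by the $n$-minors of $\mathcal J$, and its preimage in the polynomial ring is $\mathrm{Jac}(B)$. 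Because $\Omega_{B/A}$ is finitely generated, $\mathop{\mathrm{Supp}}\Omega_{B/A}=V(F_0)=V(\mathrm{Jac}(B))\subseteq\spec B$. Over $U$ the map is flat, so it is \'etale exactly at points of $\spec B$ outside this support. It follows that the map is finite \'etale over an open $U'\subseteq U$ if and only if $U'$ avoids $\pi(V(\mathrm{Jac}(B)))$.

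Second, I will identify the image $\pi(V(\mathrm{Jac}(B)))\cap U$ with $V(\mathrm{Jac}(B)\cap k[\bs y])\cap U$, and I expect this to be the main obstacle. In general the closure of the image of $V(\mathrm{Jac}(B))$ is $V(\mathrm{Jac}(B)\cap k[\bs y])$ by the elimination theorem, and the issue is that the image need not be closed. Over $U$, however, $\spec (k[\bs y,\bs x]/\mathrm{Jac}(B))\times_{\spec A}U\to U$ is a closed subscheme of a finite morphism, so it is finite, hence closed. I will then argue locally. For $h\in A$ with $D(h)\subseteq U$, we have $B_h/\mathrm{Jac}(B)_h$ finite over $A_h$. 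For a finite algebra $A_h\to C$, the image of $\spec C$ is $V(\ker(A_h\to C))$, and the kernel is the localization of $(\mathrm{Jac}(B)\cap k[\bs y])/J$, since localization commutes with taking contractions. Hence the image within $D(h)$ equals $V(\mathrm{Jac}(B)\cap k[\bs y])\cap D(h)$, and these local equalities glue over $U$.

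Combining the two steps, the largest open set over which the map is finite \'etale is $U\setminus V(\mathrm{Jac}(B)\cap k[\bs y])$, which is the complement of $W\cup V(F)\cup V(\mathrm{Jac}(B)\cap k[y_1,\ldots,y_m])$. Maximality holds because every point of this union either lies outside the finite flat locus or, by the second step, has a ramified point in its fiber, so no open set containing such a point can be finite \'etale.
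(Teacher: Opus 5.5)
Your proposal is correct and follows the paper's proof. You restrict to the finite flat locus $U=\spec A\setminus(W\cup V(F))$ and identify the ramification locus as $V(\mathrm{Jac}(B))$ through the Jacobian presentation of $\Omega_{B/A}$. You then use finiteness to see that $V(\mathrm{Jac}(B)\cap k[y_1,\ldots,y_m])$ agrees with the image of the ramification locus, and not merely its closure, inside $U$; the paper does this over $\spec A\setminus W$, in one line. Your local computation of $V(\ker(A_h\to B_h/\mathrm{Jac}(B)_h))$ on each $D(h)\subseteq U$ simply spells out that step in more detail.
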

	\begin{proof}
		By Theorem \ref{thm:non-finite-locus} and Theorem \ref{thm:non-finite-flat-locus}, the non-finite-flat locus is given by $W\cup V(F)$. Since the ramification locus can be found by the Jacobian criterion, elimination gives the closure of the image of the ramification locus. Over $\spec A\setminus W$, the morphism is finite and hence closed. Therefore taking the closure introduces additional points only inside $W$.
	\end{proof}
	
	Algorithm \ref{algo-non-finite-etale-locus} turns Theorem \ref{thm:non-finite-etale-locus} into effective computations.
	
	\begin{algorithm}[htbp]
		\KwIn{A block-order Gr\"obner basis $\mathcal G$ for
			$I\unlhd k[y_1,\ldots,y_m,x_1,\ldots,x_n]$.}
		\KwOut{An ideal defining the non-finite-\'etale locus in $\spec A$.}
		\caption{\texttt{NonFiniteEtaleLocus}}
		\label{algo-non-finite-etale-locus}
		
		$N
		:=\mathtt{NonFiniteFlatLocus}(\mathcal G)$\;
		
		Let $\mathcal{J}$ be the matrix $\left(\frac{\partial g}{\partial x_i}\right)_{i=1,\ldots,n,g\in\mathcal{G}_x}$\;
		
		$\mathrm{Jac}:=I+\ideal{\mathtt{minors}(\mathcal J,n)}$\;
		
		Compute a Gr\"obner basis $\mathcal H$ of $\mathrm{Jac}$ with respect to
		the block order $T_x>T_y$\;
		
		$\mathcal{H}_y:=\mathcal H\cap k[y_1,\ldots,y_m]$\;
		
		\Return{$N\cdot \ideal{\mathcal{H}_y}$}\;
	\end{algorithm}

	\begin{corollary}[Finite \'Etaleness Test]
		\label{cor:finite-etale-test}
		Whether the morphism $\spec B\to \spec A$ is finite \'etale can be effectively determined. In particular, the conditions in Theorem \ref{thm-covering-map} can be effectively verified.
	\end{corollary}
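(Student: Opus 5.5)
The plan is to read off both claims from Theorem \ref{thm:non-finite-etale-locus} together with Corollary \ref{cor:equiv-characterization-of-finite-etale}. For the first claim, Algorithm \ref{algo-non-finite-etale-locus} returns an ideal $\mathfrak{a}\unlhd k[y_1,\ldots,y_m]$ with $V(\mathfrak{a})=W\cup V(F)\cup V(\mathrm{Jac}(B)\cap k[\bs y])$ inside $\spec A=V(J)$. We have $V(\mathfrak{a})\cap V(J)=\varnothing$ if and only if $1\in \mathfrak{a}+J$. This can be decided by one more Gr\"obner basis computation, since everything is produced by finitely many Gr\"obner basis computations, minors, ideal quotients and intersections over the effective field $k$. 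By Theorem \ref{thm:non-finite-etale-locus}, the complement of this locus is the largest open subset over which the map is finite \'etale. Hence $\spec B\to\spec A$ is finite \'etale exactly when the locus is empty. One detail needs checking: Algorithm \ref{algo-non-finite-etale-locus} uses $\mathcal{G}_x$ rather than all of $\mathcal G$ as generators in the Jacobian. Elements of $\mathcal G_y$ lie in $k[\bs y]$, so their $x$-partials vanish and they only contribute zero columns; they are already present through $I$. So the computed ideal agrees with $\mathrm{Jac}(B)$.

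For the second claim, take an affine morphism $\pi:X\to Y$ given by an ideal $I$ as in the computational setting, since this section works in the affine case. First compute the radicals $\sqrt I$ and $\sqrt J$ by the cited radical algorithms. Note that $\sqrt I\cap k[\bs y]=\sqrt J$, so the reduced map $X_{\mathrm{red}}\to Y_{\mathrm{red}}$ is again in the computational setting, with $I$ replaced by $\sqrt I$. Next, decide whether $\pi$ itself is quasi-finite and flat. By Proposition \ref{prop-qff-implies-ff}, the hypotheses of Theorem \ref{thm-covering-map} force finiteness, so it suffices to test finite flatness of $\pi$. This is Corollary \ref{cor:finite-flat-test}: the ideal from Algorithm \ref{algo:non-finite-flat-locus} for $I$ must cut out the empty set in $V(J)$. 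If this fails, then $\pi$ is either not flat, or it is quasi-finite flat but not finite, and in the latter case Proposition \ref{prop-qff-implies-ff} says it cannot have locally constant geometric fibers. Either way the hypotheses fail. If it succeeds, Corollary \ref{cor:equiv-characterization-of-finite-etale} says local constancy of $n(y)$ is equivalent to $\pi_{\mathrm{red}}$ being finite \'etale, and that is decided by the first part applied to $\sqrt I$.

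The main obstacle I expect is making sure the non-finite-\'etale locus really detects finite \'etaleness globally. The plan is to confirm that ``finite flat and $\Omega_{B/A}$ has empty support'' is equivalent to finite \'etale. This uses definition (3) of \'etaleness. It also uses the fact that $\mathrm{Jac}(B)$ cuts out $\mathop{\mathrm{Supp}}\Omega_{B/A}$, via the zeroth Fitting ideal of the presentation $\bigoplus B\,dg_j\to\bigoplus B\,dx_i\to\Omega_{B/A}\to0$. The rest is a direct assembly of the earlier results.
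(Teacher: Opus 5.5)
Your proposal is correct and follows the paper's implicit route. The first claim is read off from Theorem \ref{thm:non-finite-etale-locus} by testing whether the returned locus is empty. The ``in particular'' part comes from computing $\sqrt I$ and invoking Corollary \ref{cor:equiv-characterization-of-finite-etale}, as indicated at the start of Section \ref{sect:effective}. Your separate finite-flatness test on $\pi$ itself is a detail the paper glosses over, and it is needed to check the hypotheses of Theorem \ref{thm-covering-map} literally, since $\pi_{\mathrm{red}}$ being finite \'etale does not force $\pi$ to be flat (although it already suffices for the covering conclusion).
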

	\begin{remark}
		If one is only concerned with deciding whether
		$\spec B\to\spec A$ is finite \'etale, then the computation can be
		simplified considerably.
		
		First, there is no need to eliminate the $x$-variables from the Jacobian ideal. Indeed, the morphism is unramified if and only if $\mathrm{Jac}=\ideal{1}$. Thus, after the finiteness test, one may	compute $\mathrm{Jac}$ before carrying out the more expensive flatness test. If $\mathrm{Jac}$ is a proper ideal, then the morphism cannot be finite \'etale and the computation terminates immediately.
		
		More generally, the computation may terminate as soon as one of the	necessary conditions fails. If some coefficient ideal $J_i$ is proper, then the morphism is not finite. If $\mathrm{Jac}$ is proper, then it is not unramified. Finally, if the flatness ideal $F$ is proper, then it is not flat. Only when all three tests succeed is the morphism finite \'etale.
		
		We emphasize that the individual Fitting ideals $F_i$ need not be unit;	flatness is determined by the combined ideal
		\[
		F=\sum_{i=0}^r\bigl((J:F_{i-1})\cap F_i\bigr).
		\]
		Under additional hypotheses guaranteeing flatness, such as those in
		Remark \ref{rmk:miracle-flatness}, the flatness computation may be omitted altogether.		
	\end{remark}

	\begin{example} \label{example:double-cover-of-nodal-curve-verification}
		
		Recall the double cover $X$ of nodal curve $Y$ in Example \ref{example:double-cover-of-nodal-curve}. Their defining ideals are $$I=\ideal{y_1-x_1^2+1,y_2-x_1 x_2,x_2^2-(x_1^2-1)^2}, J=I\cap k[y_1,y_2]=\ideal{y_1^2(y_1+1)-y_2^2}.$$
		We will verify that the projection from $X$ to $Y$ is finite \'etale using Algorithm \ref{algo-non-finite-etale-locus}.
		
		Using a block order $\mathtt{grevlex}(x_1,x_2)>\mathtt{grevlex}(y_1,y_2)$, we compute a Gr\"obner basis for $I$ (with leading terms marked):
		$$\mathcal{G}=\left\{\underline{\mathstrut y_1^3}+y_1^2-y_2^2,\underline{\mathstrut y_2 x_1}-y_1 x_2-x_2, \underline{\mathstrut y_1^2 x_1}- y_2 x_2 ,\underline{\mathstrut x_2^2}-y_1^2,\underline{\mathstrut x_1 x_2}-y_2,\underline{\mathstrut x_1^2}-y_1-1\right\}.$$
		
		As $x_1^2-y_1-1, x_2^2-y_1^2\in \mathcal{G}$, we conclude that the projection is finite. The spanning monomials can be chosen to be $\mathcal{M}=\{1, x_1, x_2, x_1 x_2\}$. By Lemma \ref{lem:relations-among-monomials}, the relations are given by $$\mathcal{N}=\left\{-y_1 x_2+ y_2 x_1-x_2, y_1^2 x_1- y_2 x_2 ,x_1 x_2-y_2\right\}.$$
		Then the presentation matrix is
		$$N=\left(
		\begin{array}{cccc}
			0 & y_2 & -y_1-1 & 0 \\
			0 & y_1^2 & -y_2 & 0 \\
			-y_2 & 0 & 0 & 1 \\
		\end{array}
		\right)^{\top}$$
		Expanding the minors of $N$, we see that the Fitting ideals are
		$$F_0=F_1=\ideal{y_1^3+y_1^2-y_2^2}=J, F_2=F_3=F_4=\ideal{1}.$$
		Therefore the projection is also flat.
		
		Finally we check the smoothness of fibers. The Jacobian matrix is $$\mathcal{J}=\left(
		\begin{array}{ccc}
			2 x_1 & x_2 & 4 x_1^3-4 x_1 \\
			0 & x_1 & -2 x_2
		\end{array}
		\right).$$
		So the Jacobian ideal is $I$ plus the ideal generated by all $2$-minors of $\mathcal{J}$. A direct computation shows that the Jacobian ideal is $\ideal{1}$. Therefore all fibers are smooth and the projection is finite \'etale. 
	\end{example}
	
	\section{Applications}\label{sect-app}
	
	We illustrate the preceding results with two examples.

	\begin{example}\label{example:alg-stats}
		This example is from algebraic statistics {\cite[Example 1.17]{huh2014likelihood}}. A random variable $\xi$ follows an unknown discrete distribution $(p_0,p_1,p_2)$, where the distribution shall satisfy a statistical model:
		$$F(p_0,p_1,p_2)=p_2(p_1-p_2)^2+(p_0-p_2)^3=0.$$
		We can model all possible distributions by $V(F)$ in the projective plane $\mathbb{P}^2_p$. Here the subscript $p$ stands for the probability.
		
		Suppose our observation for $\xi$ is $(u_0,u_1,u_2)$, and we model all observations by another projective plane $\mathbb{P}^2_u$. Our goal is to find the most ``reasonable'' distribution from the observation. This can be done by the method of maximum likelihood estimation (MLE). The family of all estimated distributions for all observations is the \emph{likelihood correspondence}, a closed subvariety $$X\hookrightarrow V(F)\times \mathbb{P}_u^2 \hookrightarrow \mathbb{P}_p^2 \times \mathbb{P}_u^2.$$ By a standard argument, the defining equations for $X$ in $\mathbb{Q}[p_0,p_1,p_2,u_0,u_1,u_2]$ can be found by saturating the ideal $\left\langle{\left|\begin{smallmatrix}
				1 & 1 & 1 \\ u_0 p_1 p_2 & p_0 u_1 p_2 & p_0 p_1 u_2 \\ {\partial F}/{\partial p_0} & {\partial F}/{\partial p_1} & {\partial F}/{\partial p_2}
			\end{smallmatrix}\right| , F }\right\rangle$ with respect to $\ideal{p_0 p_1 p_2 (p_0+p_1+p_2)}\cap \ideal{p_0-p_2,p_1-p_2}$.
		
		The fibers of projection to the observation space $\pi:X\to \mathbb{P}_u^2$ are the MLEs of the observation. The length of the generic fiber is known as the \emph{ML degree} of the statistic model $F$. It is known that the ML degree of $F$ is $5$ \cite{huh2014likelihood}, which means that there are $5$ sets of ``distributions'' $(p_0,p_1,p_2)$ over $\mathbb{C}$ for a general observation data $(u_0,u_1,u_2)$.
		
		However, a complex ``distribution'' does not make much sense, as probabilities $p_0,p_1,p_2$ must be real numbers. We will now scrutinize the real fibers of $\pi_\mathbb{R}$ more carefully. We invite the readers to verify our arguments below using Algorithm \ref{algo-non-finite-locus}-\ref{algo-non-finite-etale-locus} on a computer algebra system.
		
		First, the projection $\pi:X \to \mathbb{P}^2_u$ is already a finite one. A brute-force way to see this is to check that all fibers are finite, as a quasi-finite projective morphism is finite. A more elegant solution is to cover $\mathbb{P}_u^2$ by affine opens $D_+(u_0+u_1+u_2),D_+(u_0+3u_2)$ and $D_+(u_0(u_0+u_1))$. Their preimages are again affine opens $D_+(p_0+p_1+p_2),D_+(p_2)$ and $D_+(p_1)$ respectively, so we are in the affine case where Algorithm \ref{algo-non-finite-locus} is applicable.
		
		Next, one can verify by Algorithm \ref{algo:non-finite-flat-locus} that the non-flat locus of the finite morphism $\pi$ is a single point $(1:1:1)\in \mathbb{P}^2_u$, i.e.\ the restriction of $\pi$ on $\mathbb{P}^2_u\backslash(1:1:1)$ is flat. Therefore, all fibers except the fiber over $(1:1:1)$ of $\pi$ are of the same length \cite[Corollary III.9.10]{hartshorne2013algebraic}. This length is the ML degree of $F$, which is $5$. But the length of fiber over $(1:1:1)$ is $6$. Hence the jump in length is purely due to the non-flatness.
		
		Then, the discriminant locus is a curve $C$ of degree $8$ in $\mathbb{P}_u^2$ (this can be found by Algorithm \ref{algo-non-finite-etale-locus}). This is where the geometric fiber cardinality is different from the expected number $5$. The exceptional point $(1:1:1)$ also lies on $C$. Therefore, the restriction of $\pi$ on $\mathbb{P}_u^2\backslash C$ is a finite flat morphism with constant geometric fibers. By Theorem \ref{thm-covering-map}, $\pi_{\mathbb{R}}$ is a covering map on $\mathbb{P}_u^2(\mathbb{R})\backslash C(\mathbb{R})$, and the number of real fibers is a constant on each connected component of $\mathbb{P}_u^2(\mathbb{R})\backslash C(\mathbb{R})$.
		
		The singular locus of $C$ consists of $21$ points, and $7$ of them are real. The geometric fiber cardinality $n(y)$ is $4$ on the smooth points of $C$, but this number drops to $3$ on $\mathop{\mathrm{Sing}}C$. Therefore, the restriction of $\pi$ on $C\backslash (\mathop{\mathrm{Sing}}C \cup (1:1:1))$ (base change) is again a finite flat morphism with constant geometric fibers and we can apply Theorem \ref{thm-covering-map} again here.
		
		\begin{figure}[hbtp]
			\centering
			\includegraphics[width=0.55\linewidth]{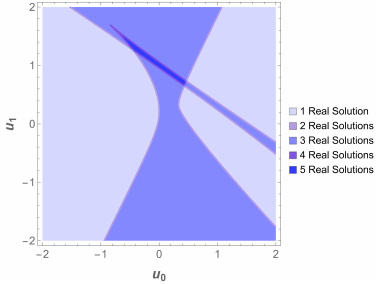}
			\caption{The number of real fibers in $D_+(u_0+u_1+u_2)$.} 
			\label{fig:alg-stats}
		\end{figure}
		
		We plot the regions where the observation has $1,\ldots,5$ distinct real MLEs using different colors in Figure \ref{fig:alg-stats}. One can summarize the result as follows:
		\begin{enumerate}
			\item There is a region in the center of the figure where each observation has $5$ different real MLEs.
			\item Each observation on the boundaries of the aforementioned region has $4$ different real MLEs, except for four corners, which are singularities of the discriminant locus. Each corner point corresponds to three real $(p_0:p_1:p_2)$.			
			\item The region for $3$ real MLEs is connected (the upper, lower and middle-right regions in Figure \ref{fig:alg-stats} are connected at the infinity).
			\item For an observation with a unique real MLE, there are two connected components: the left-hand region and the upper-right region actually lie in the same connected component.
			\item The remaining points are the border between observations with $1$ and $3$ real MLEs. They have two distinct real MLEs.
		\end{enumerate}
		
		One can even require that all the observation data $(u_0:u_1:u_2)$ are positive and look for the number of positive MLEs $(p_0:p_1:p_2)$. It can be shown that when the observation is positive, one of the real solutions is never positive, but the remaining real solutions are. See Figure \ref{fig:alg-stats-pos}.
		
		\begin{figure}[hbtp]
			\centering
			\includegraphics[width=0.55\linewidth]{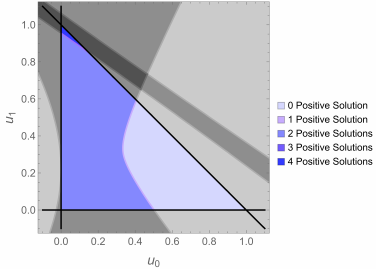}
			\caption{The number of positive fibers in the positive quadrant of $D_+(u_0+u_1+u_2)$.} 
			\label{fig:alg-stats-pos}
		\end{figure}
		
		Previously, \cite{rodriguez2015data,rodriguez2017probabilistic} developed a notion called data-discriminant, which corresponds to the curve $C$ in this example. The main result about data-discriminant locus in \cite{rodriguez2015data,rodriguez2017probabilistic} is that the number of real or positive solutions is uniform over each connected component of the complement of the data-discriminant locus. Using our results, it is even possible to analyze the special data on the data-discriminant locus.
		
	\end{example}

	Finally, we study a matrix completion problem using all the techniques we have developed so far. Here, a matrix $M$ is partially observed and we need to recover it under either a low-rank or positive semi-definite hypothesis on $M$.
	
	\begin{example}[Matrix Completion]
		Consider the symmetric matrix $$\begin{array}{lcc}
			M&=&\left(
			\begin{array}{cccc}
				-v & 0 & u-x & 2 v-x \\
				0 & u+v & -v-x & u+2 v-x \\
				u-x & -v-x & -u+2 v+y & -2 u-v+y \\
				2 v-x & u+2 v-x & -2 u-v+y & y \\
			\end{array}\right)\\
			&=&{\footnotesize u\cdot \begin{pmatrix}						
					0 & 0 & 1 & 0 \\
					0 & 1 & 0 & 1 \\
					1 & 0 & -1 & -2 \\
					0 & 1 & -2 & 0 \\						
				\end{pmatrix}+v \cdot \begin{pmatrix}
					-1 & 0 & 0 & 2 \\
					0 & 1 & -1 & 2 \\
					0 & -1 & 2 & -1 \\
					2 & 2 & -1 & 0 \\
				\end{pmatrix}+
				\begin{pmatrix}
					0 & 0 & -x & -x \\
					0 & 0 & -x & -x \\
					-x & -x & y & y \\
					-x & -x & y & y \\
			\end{pmatrix}}.
		\end{array}
		$$
		Here, the last summand $\left(\begin{smallmatrix}
			0 & 0 & -x & -x \\
			0 & 0 & -x & -x \\
			-x & -x & y & y \\
			-x & -x & y & y \\
		\end{smallmatrix}\right)$ is the observed part of $M$ and $u,v$ are unknown.
		
		\noindent Question: \begin{enumerate}[label*=(\roman*)]
			\item ({\textit{Low-rank completion}}) For which $x,y\in \mathbb{R}$ values can $M$ be completed to a real matrix of rank $2$, i.e.\ there exist $u,v\in \mathbb{R}$ such that $M$ is a real symmetric matrix of rank $2$?
			\item ({\it PSD completion}) For which $x,y\in \mathbb{R}$ values can $M$ be completed to a p.s.d.\ of rank $2$, i.e.\ there exist $u,v\in \mathbb{R}$ such that $M$ is a real symmetric positive semi-definite matrix of rank $2$?
		\end{enumerate}

		Let $I$ be the ideal generated by all $3$-minors of $M$ and let $J$ be the ideal generated by all $2$-minors of $M$. Set $X=\spec \mathbb{R}[x,y,u,v]/I$, $Z=\spec \mathbb{R}[x,y,u,v]/J$ and $Y=\spec \mathbb{R}[x,y]$. Clearly $Z$ is a closed subvariety of $X$, and the real rank-two matrices are the $\mathbb{R}$-rational points of $X\backslash Z$. It can be easily checked that the projection $X\to Y$ is quasi-finite.
		
		By recursively applying generic flatness and counting the geometric fiber, we may stratify $Y$ into a union of locally closed subvarieties $Y_i$ such that the projection $X\times_Y Y_i \to Y_i$ is a quasi-finite, flat morphism with constant geometric fibers. For example, the stratification can be taken as
		$$\left\{\begin{array}{rcl}
			Y_1&=&D(x (x-y) (8 x^4+4 x^2 y^2-4 x y^3+y^4)),\\
			Y_2&=&V(8 x^4+4 x^2 y^2-4 x y^3+y^4)\cap D(x(x-y)),\\
			Y_3&=&V(x)\cap D(x-y),\\
			Y_4&=&V(x-y)\cap D(x),\\
			Y_5&=&V(x,y).
		\end{array} \right.$$					
		
		Similarly $Y$ can be stratified into a union of locally closed subvarieties $Y_j'$ such that the projection $Z\times_Y Y_j' \to Y_j'$ is flat. A possible stratification is
		$$\left\{\begin{array}{rcl}
			Y_1'&=&D(x),\\
			Y_2'&=&V(x)\cap D(y),\\
			Y_3'&=&V(x,y).
		\end{array} \right.$$
		
		Then for arbitrary $Y_i$ and $Y_j'$, we have $X \times_Y Y_i \times_Y Y_j' \to Y_i \times_Y Y_j'$ is a quasi-finite, flat morphism with constant geometric fibers and $Z \times_Y Y_i \times_Y Y_j' \to Y_i \times_Y Y_j'$, the restriction of projection on $Z$, is flat. By Theorem \ref{thm-covering-map-of-subschemes}, the projection of $X(\mathbb{R})\backslash Z(\mathbb{R})$ on $Y_i(\mathbb{R}) \cap Y_j'(\mathbb{R})$ is a covering map.
		
		The real affine plane $\mathbb{R}^2$ can be decomposed into $9$ semi-algebraically connected sets $C_1,\ldots,C_9$, depending on the sign condition of $x$ and $y-x$, such that for all $Y_i$, $Y_j'$, either $C_k\cap Y_i(\mathbb{R}) \cap Y_j'(\mathbb{R})=\varnothing$ or $C_k\subseteq Y_i(\mathbb{R}) \cap Y_j'(\mathbb{R})$. %
		
		Then for each cell $C_k$, the number of real fibers of $X\backslash Z$ is a constant. Therefore we can choose one point from each cell $C_k$ to check whether $M$ can be completed to a real symmetric matrix of rank $2$ for all $(x,y)\in C_k$. For example, let us choose $(x,y)=(1,1)$ from $\{(x,y)|x>0,y=x\}$ and plug it into $M$. We get 
		$$\left(
		\begin{array}{cccc}
			-v & 0 & u-1 & 2 v-1 \\
			0 & u+v & -v-1 & u+2 v-1 \\
			u-1 & -v-1 & -u+2 v+1 & -2 u-v+1 \\
			2 v-1 & u+2 v-1 & -2 u-v+1 & 1 \\
		\end{array}
		\right).$$
		By setting all $3$-minors to zero, we get a zero-dimensional polynomial system in $u,v$. There are $6$ complex solutions and $2$ of them are real matrices of rank $2$. They are
		$$\left(
		\begin{array}{cccc}
			0 & 0 & -1 & -1 \\
			0 & 0 & -1 & -1 \\
			-1 & -1 & 1 & 1 \\
			-1 & -1 & 1 & 1 \\
		\end{array}
		\right)\text{ and }\left(
		\begin{array}{cccc}
			-1 & 0 & 0 & 1 \\
			0 & 2 & -2 & 2 \\
			0 & -2 & 2 & -2 \\
			1 & 2 & -2 & 1 \\
		\end{array}
		\right).$$
		Therefore we can conclude that for all $x=y>0$, the matrix $M$ can be completed to a real symmetric matrix of rank $2$. Repeat this process for all other cells, we can see that the matrix $M$ can be completed to a real symmetric matrix of rank $2$ if and only if $x\ne 0$. This solves the first part of the question.
		
		As for the second part of the question, we recall the Rank-Signature lemma (\cite[Lemma 5.3]{chen2023geometric} or equivalently \cite[Proposition 9.14]{basu2008algorithms}), which says for a continuous family of real symmetric matrices, the signature does not change if the rank remains the same. Using the unique path lifting property of covering maps, we can see that for every pair of points $p,q$ in the same cell $C_k$, the fibers of $p,q$ in $X(\mathbb{R})\backslash Z(\mathbb{R})$ can be connected using paths in $X(\mathbb{R})\backslash Z(\mathbb{R})$. So the endpoints share the same signature and there is a p.s.d.\ in the fiber of some $p_0\in C_k$ if and only if there is a p.s.d.\ in fibers of all $p\in C_k$. Therefore, again it suffices to check only one point in $C_k$. For example, since all the real fibers of $(x,y)=(1,1)$ in $X(\mathbb{R})\backslash Z(\mathbb{R})$ are indefinite, $M$ cannot be completed to a p.s.d.\ for all $x=y>0$. Continuing this procedure, we see that it is impossible to complete $M$ to be a p.s.d.\ of rank $2$ for any $(x,y)$. 
		
	\end{example}
	
	\section{Future Work}	\label{sect:future}
	
	\subsection{The Challenge of Positive-Dimensional Fibers}
	
	In this paper, we focused on the case where fibers are zero-dimensional and we showed that flatness and local constancy of geometric fibers together give a good criterion for being a covering. Meanwhile, families of curves, surfaces, threefolds, etc also play an important role in many aspects of applied mathematics. It would be nice to identify some good families from them.
	
	\begin{problem}\label{prob:positive-dim}
		Let $R$ be a real closed field. The map $\pi :X\to Y$ is an $R$-morphism of $R$-varieties. Find conditions on $\pi$ so that certain topological properties (e.g. non-emptiness, dimension, etc) are shared among the real fibers in a Euclidean neighborhood of each point $y\in Y(R)$.	
	\end{problem}
	
	In particular, one could ask whether a similar statement holds if the ``geometric fiber cardinality'' is replaced by ``Euler characteristic'' or ``cohomology groups'' in the positive-dimensional case. This is far more challenging, as higher-dimensional varieties are very complicated. Even the case for families of curves is highly non-trivial. Still, any progress in this direction would be appealing.
	
	\subsection{Geometry of Discriminant}
	For a finite flat morphism $\pi$, the discriminant locus $D$ is a locally principal subscheme of the base that characterizes the ramification and branching. On the discriminant locus, the geometric fiber cardinality $n(y)$ is lower than the expected number. In Example \ref{example:alg-stats}, we have seen that $n(y)$ is even lower on the singularities. This is not just a coincidence. Dimca and Rosian showed that if one stratifies the discriminant locus of a general monic polynomial $x^n+\sum_{k=0}^{n-1}a_k x^k$ by the geometric fiber cardinality, then this stratification is the canonical Whitney stratification of the discriminant locus \cite{dimca1984samuel}. Similarly, Lazard and McCallum observed that the Hilbert-Samuel multiplicity of discriminant is linked to the singularity type of the fiber \cite{lazard2009iterated}. It would be interesting to establish certain connections between intrinsic geometry of the discriminant $D$ and the number $n(y)$ for $\pi$.
	
	\subsection{Flatness Test}
	
	We have seen that the fiber length can jump for a non-flat morphism in Example \ref{example:alg-stats}. Therefore the importance of flatness is unquestionable. However, the question of whether an algorithm exists to test the flatness of a finite-type general algebra has remained unresolved for decades. Some known cases where such an algorithm exists were reported in \cite{vasconcelos1997flatness}. Similarly, the same question can be asked for the smoothness of a map.
	
	\section*{Acknowledgment}
	
	The author is grateful to Hoon Hong for fruitful discussions on an earlier draft, and to Teresa Krick for explaining the idea behind the construction of a generalized shape lemma, which inspired the use of Chow form recovery in an earlier version of this paper. Above all, the author wishes to express his deepest gratitude to his PhD advisor, Bican Xia, for his continuous support and guidance.

	\printbibliography
\end{document}